%
\documentclass[12pt,reqno]{amsart}%
\usepackage{amsmath, amsfonts, amssymb, amsthm, amscd, amsbsy}
\usepackage{fancyhdr}
\usepackage[usenames,dvipsnames,svgnames,x11names,hyperref]{xcolor}
\usepackage{geometry}
\usepackage{graphicx}
\usepackage{enumerate}
\usepackage[pagebackref]{hyperref}%
\usepackage{amsmath}%
\setcounter{MaxMatrixCols}{30}%
\usepackage{amsfonts}%
\usepackage{amssymb}
\providecommand{\U}[1]{\protect\rule{.1in}{.1in}}
\hypersetup{backref=true,
pagebackref=true,
hyperindex=true,
colorlinks=true,
breaklinks=true,
urlcolor=NavyBlue,
linkcolor=Fuchsia,
bookmarks=true,
bookmarksopen=false,
filecolor=black,
citecolor=ForestGreen,
linkbordercolor=red
}
\geometry{
a4paper,
total={8.5in,11.5in},
left=1in,
right=1in,
top=1in,
bottom=1in,
}
\newtheorem{theorem}{Theorem}[section]
\newtheorem{theoremintro}{Theorem}
\theoremstyle{plain}

\newtheorem{corollary}[theorem]{Corollary}

\newtheorem{example}{Example}[section]

\newtheorem{lemma}[theorem]{Lemma}

\newtheorem{proposition}[theorem]{Proposition}
\newtheorem{remark}[theorem]{Remark}

\numberwithin{equation}{section}

\newtheorem{theoremalph}{Theorem}

\newtheorem{lemmaalph}{Lemma}

\allowdisplaybreaks

\def\a{\alpha}
\def\b{\beta}
\def\d{\delta}
\def\e{\epsilon}
\def\la{\lambda}
\def\p{\partial}
\def\g{\gamma}
\def\grad{\nabla}
\def\oo{\infty}

\def\dist{\operatorname{dist}}
\def\R{\mathbb{R}}

\def\mcH{\mathcal{H}}

\def\esssup{\operatorname{ess}\sup}

\newcommand{\gen}[1]{\ensuremath{\langle #1\rangle}}

\def\diam{\operatorname{diam}}

\newcommand{\norm}[1]{\left\lVert#1\right\rVert}

\begin{document}
\title[Hardy Identities on Domains]{$L^p$-Hardy identities and inequalities with respect to the distance and mean distance to the boundary}
\author{Joshua Flynn}
\address{Joshua Flynn: Department of Mathematics\\
University of Connecticut\\
Storrs, CT 06269, USA}
\email{joshua.flynn@uconn.edu}
\author{Nguyen Lam}
\address{Nguyen Lam: School of Science and the Environment\\
Grenfell Campus, Memorial University of Newfoundland\\
Corner Brook, NL A2H5G4, Canada}
\email{nlam@grenfell.mun.ca}
\author{Guozhen Lu }
\address{Guozhen Lu: Department of Mathematics\\
University of Connecticut\\
Storrs, CT 06269, USA}
\email{guozhen.lu@uconn.edu}
\thanks{J. Flynn and G. Lu were partially supported by a collaboration grant  from the Simons Foundation. N. Lam was partially supported by an NSERC Discovery Grant.}

\begin{abstract}
  Firstly, this paper establishes useful forms of the remainder term of Hardy-type inequalities on general domains where the weights are functions of the distance to the boundary.
  For weakly mean convex domains we use the resulting identities to establish nonexistence of extremizers for and improve known sharp Hardy inequalities.
  Secondly, we establish geometrically interesting remainders for the Davies-Hardy-Tidblom inequalities for the mean distance function, as well as generalize and improve several Hardy type inequalities   in the spirit of Brezis and Marcus and  spectral estimates of Davies. Lastly, we apply our results to obtain Sobolev inequalities for non-regular Riemannian metrics on geometric exterior domains.
\end{abstract}

\keywords{$p$-Bessel pair, Hardy's identities and inequalities, Hardy-Sobolev inequality, Spectral estimates, convex domain, weakly mean convex domain}
\subjclass[2020]{26D10, 35A23, 35P15, 46E35}

\maketitle
\tableofcontents

\section{Introduction}

The objective of this paper is to study applications, generalizations and improvements of the classical Hardy inequalities of the following form on domains: for $u\in{C_{0}^{\oo}(\Omega)}$ and $1<p<\oo$, the classical Hardy inequality on a domain $\Omega \subsetneq \R^{N}$ is given by
\begin{equation}
  \int\limits_{\Omega}|\grad{u(x)}|^{p}dx\geq{c_{p}}\int\limits_{\Omega}\frac{|u(x)|^{p}}{d(x)^{p}}dx,
  \label{eq:hardy-inequality-general-domain}
\end{equation}
where $c_{p}=c(p,N,\Omega)$ is a positive constant independent of $u$ and $d$ is either the distance function or mean distance function to the boundary $\p\Omega$.
To be more precise, let
\begin{align*}
  d_{\Omega}(x)&=\dist(x,\p\Omega)\\
  d_{\Omega,M,p}(x)&=\left( \frac{\sqrt{\pi}\Gamma\left( \frac{N+p}{2} \right)}{\Gamma\left( \frac{p+1}{2} \right)\Gamma\left( \frac{N}{2} \right)}\int\limits_{S^{N-1}}\rho_{\nu}(x)^{-p}d\sigma(\nu)\right)^{-\frac{1}{p}}
\end{align*}
be, respectively, the distance and mean distance functions to the boundary $\p\Omega$, where $1<p<\oo$, $\rho_{\nu}(x)$ is the distance from $x\in\Omega$ to $\p\Omega$ in the direction $\nu\in{S^{N-1}}$ and $d\sigma(\nu)$ is the normalized round measure on the sphere $S^{N-1}$.
Then \eqref{eq:hardy-inequality-general-domain} holds firstly for $d=d_{\Omega}$ when either we impose appropriate regularity assumptions on $\p\Omega$ or we restrict the exponent range to $N<p<\oo$ and secondly for all exponents $1<p<\oo$ and all proper sub-domains when $d=d_{\Omega,M,p}$.

We first describe the essence of this paper in generality.
So, suppose we have two functional quantities $F(u)$ and $G(u)$ which are related by the sharp inequality $c F(u) \leq  G(u)$, where $c$ is the largest positive constant for this inequality to hold for all $u$ in some function space.
In a word, the primary interest of this paper is to find useful forms of the remainder term $\d(u) := G(u) - cF(u)$, which will serve several purposes depending on its form.
In case there is an extremal $u^{\sharp}$ attaining the sharp constant $c$, i.e., $cF(u^{\sharp}) = G(u^{\sharp})$, we have $\d(u^{\sharp}) = 0$ and hence a means to determine the existence and form of extremals $u^{\sharp}$, provided we can solve $\d = 0$.
In case there are no extremals $u^{\sharp}$ for $cF(u) \leq G(u)$, one strives to find meaningful $\d_{1}(u) \geq 0$ so that $\d(u) - \d_{1}(u) \geq 0$, thereby implying the improved inequality $cF(u) + \d_{1}(u) \leq G(u)$.
Lastly, independent of these two ideas, sometimes $\d(u)$ is already expressed as a useful functional quantity related to another $H(u)$ via $c' H(u) \leq \d(u)$.
In this case, the inequalities may be combined to create the new inequality $c'H(u) \leq G(u)$.

To be more specific, we focus on finding useful expressions of the remainders $\d$ and $\d_{M}$ in the following weighted Hardy identities:
\begin{equation}
  \int\limits_{\Omega}V(d_{\Omega}(x))|\grad{u(x)}|^{p}dx-\int\limits_{\Omega}W(d_{\Omega}(x))|u(x)|^{p}dx= \d(u)
  \label{eq:intro-hardy-identity-one}
\end{equation}
and
\begin{equation}
  \int\limits_{\Omega}\tilde{V}_{M,p}(x)|\grad{u(x)}|^{p}dx-\int\limits_{\Omega}W_{M,p}(x)|u(x)|^{p}dx= \d_{M}(u).
  \label{eq:intro-hardy-identity-two}
\end{equation}
Here, $(V,W)$ is a ($p$-Bessel) pair of positive $C^{1}$-weights on an interval $(0,R)$ such that there exists a positive $C^{1}$-function $\varphi$ satisfying the (generally nonlinear) ODE
\begin{equation}\label{BesselPair}
  (V|\varphi'|^{p-2}\varphi')'+W|\varphi|^{p-2}\varphi=0
\end{equation}
on $(0,R)$, and $\tilde{V}_{M,p}$ and $W_{M,p}$ are certain spherical means of $V$ and $W$, respectively, that generalize powers of $d_{\Omega,M,p}$ (see \eqref{eq:spherical-means-of-bessel-pairs} for a precise definition). This notion of $p-$Bessel pair $(V, W)$, initially defined in the work of Duy, Lam and Lu \cite{DLL},  is an extension of the $2-$ Bessel pair first introduced by Ghoussoub and Moradifam \cite{GM, GM2}.
In light of the general discussion above, it is clear that, when $\d$ or $\d_{M}$ are nonnegative or nonnegative and contain useful improving terms, then these identities imply generalizations or improvements of \eqref{eq:hardy-inequality-general-domain}, respectively.
(Naturally, specially chosen pairs $(V,W)$ recover \eqref{eq:hardy-inequality-general-domain}.)
Moreover, our expressions for $\d$ allow us to conclude nonexistence of extremal functions for the Hardy inequalities arising from \eqref{eq:intro-hardy-identity-one} whenever they are sharp.
Lastly, if $\d_{M} \geq0$, then \eqref{eq:intro-hardy-identity-two} identifies a new class of weights for which a Hardy-type inequality holds.

Concerning the class of domains we consider, the main regularity assumption we will impose on the domain boundaries $\p\Omega$ is that they are weakly mean convex, i.e., they have everywhere non-positive mean curvature (our sign convention is that convex boundaries have non-positive principal curvatures).
The family of such domains includes nonconvex domains such as solid tori in $\R^{3}$ whose outer radius is at least twice the inner radius; e.g., see \cite[Example 2.5.2]{BET}.
Other ring-like domains which are weakly mean convex are just as easy to construct.
Our results also apply to weakly mean convex domains bounded by minimal surfaces and which have finite inradius (e.g., either region bounded by the Schwarz P surface).

\medskip

Our Hardy identities have several applications and these are detailed in the discussion below.
To highlight some of the main applications, we collect four main results in the following four theorems.
The first theorem (Theorem \ref{thm:main-theorem1}) and related results are contained and proved in Section \ref{section:hardy-identities}.
The second and third theorems (Theorems \ref{thm:main-theorem2} and \ref{thm:main-theorem3}) and related results are contained and proved in Section \ref{section:mean-distance-function-results}.
Lastly, the third theorem (Theorem \ref{thm:main-theorem4}) and related results are contained and proved in Section \ref{section:sobolev-inequality}.

\medskip

Theorem \ref{thm:main-theorem1} pertains to studying the Hardy remainder and resulting existence results for extremals in the corresponding Hardy inequalities on weakly mean convex domains.
In preparation, fix $N \geq 2$, let $1 < p < \oo$, let $\Omega \subsetneq \R^{N}$ be a domain with inradius $0 < \rho \leq \oo$ and let $(V,W)$ be a $p$-Bessel pair on $(0,R)$ where $R=\oo$ or $R > \rho$.
Lastly, define the weighted Sobolev space $W_{0}^{1,p}(\Omega,V,W)$ as the closure of $C_{0}^{\oo}(\Omega)$ with respect to the norm
\[
  \left( \int\limits_{\Omega}V(d_{\Omega})|\grad{u}|^{p}dx + \int\limits_{\Omega}W(d_{\Omega})|u|^{p}dx \right)^{\frac{1}{p}}.
\]

Recall that $\varphi$ is a positive solution to the Bessel pair $(V, W)$ definition equation \eqref{BesselPair}.

\begin{theoremintro}[Theorem \ref{thm:bessel-pair-hardy-identity-general-domain}; Corollary \ref{cor:improvements-corollary}.]\label{thm:main-theorem1}
  Let $N,p,\Omega,\rho,(V,W),R,\varphi$ be as above and let
  \[
    \d(u) = \int_{\Omega} V(d_{\Omega}) |\grad u|^{2}dx - \int_{\Omega} W(d_{\Omega}) |u|^{p} dx
  \]
  be the remainder.
  If $\Omega$ is weakly mean convex and $\varphi$ is increasing, then $\d(u) \geq 0$ for $u \in W_{0}^{1,p}(\Omega,V,W)$ and $\d(u) = 0$ iff $u = 0$; in particular
  \[
    \int_{\Omega} W(d_{\Omega}) |u|^{p} dx < \int_{\Omega} V(d_{\Omega}) |\grad u|^{2}dx
  \]
  for all nonzero $u \in W_{0}^{1,p}(\Omega,V,W)$.
  Moreover, $\d(u)$ may be expressed as a distribution depending only on $u$, $\grad u$, $p$, $d_{\Omega}$, $V$ and $\varphi$.
\end{theoremintro}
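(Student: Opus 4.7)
The plan is a ``ground-state substitution'' combined with a pointwise Bregman-type identity for $|\cdot|^p$ and an integration by parts, with the sign of $\Delta d_\Omega$ on weakly mean convex domains supplying the positivity. Let $d := d_\Omega$, write $u = \varphi(d)\,v$ (so $v = u/\varphi(d)$), and split $\nabla u = A+B$ with $A := \varphi'(d)\,v\,\nabla d$ and $B := \varphi(d)\,\nabla v$. The algebraic identity
\[
  |A+B|^p \;=\; |A|^p + p\,|A|^{p-2}A\cdot B + R_p(A,B)
\]
defines the nonnegative Bregman remainder $R_p$, which vanishes iff $B = 0$ for $p>1$ by strict convexity of $x\mapsto|x|^p$. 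Using $|\nabla d| = 1$ a.e.\ and $\varphi' > 0$, the first piece becomes $(\varphi'(d))^p|v|^p$, and the cross term rearranges to $(\varphi'(d))^{p-1}\varphi(d)\,\nabla d\cdot\nabla(|v|^p)$.

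Next, I would multiply by $V(d)$, integrate by parts in the cross term against the vector field $X := V(d)(\varphi'(d))^{p-1}\varphi(d)\,\nabla d$, and invoke the $p$-Bessel ODE $\bigl(V|\varphi'|^{p-2}\varphi'\bigr)' = -W\varphi^{p-1}$ together with Leibniz to rewrite
\[
  \bigl(V(\varphi')^{p-1}\varphi\bigr)'(d) \;=\; -W(d)\varphi(d)^p + V(d)(\varphi'(d))^p.
\]
The $V(\varphi')^p$ contribution cancels $\int V|A|^p$ from the first piece, while the $W\varphi^p|v|^p = W|u|^p$ contribution reproduces exactly the $-\int W|u|^p$ term in $\delta(u)$. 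What survives is the key identity
\[
  \delta(u) \;=\; -\int_\Omega V(d)\,(\varphi'(d))^{p-1}\varphi(d)\,|v|^p\,\Delta d \;+\; \int_\Omega V(d)\,R_p(A,B)\,dx,
\]
which already realizes $\delta$ as a sum of two nonnegative quantities determined by $u$, $\nabla u$, $p$, $d_\Omega$, $V$, $\varphi$ (via $v = u/\varphi(d)$), as required for the ``moreover'' statement.

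Both summands are $\ge 0$: the first because weakly mean convex domains are characterized by $\Delta d_\Omega \le 0$ as a signed Radon measure, while $V,\varphi,\varphi'>0$; the second because $R_p \ge 0$ pointwise. Hence $\delta(u) \ge 0$. For strictness, $\delta(u)=0$ forces $R_p(A,B)=0$ a.e., whence $\nabla v = 0$ a.e.\ and $v$ is constant on the (connected) domain $\Omega$; combined with the vanishing of the measure term and the constraint that $u$ lies in the $C_0^\infty$-closure $W_0^{1,p}(\Omega,V,W)$, this constant must be zero, giving $u=0$.

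The main technical obstacle is that $d_\Omega$ is merely Lipschitz and $\Delta d_\Omega$ is only a signed Radon measure, with a possibly nontrivial singular part supported on the cut locus. The integration by parts above must therefore be read as the pairing of the \emph{nonnegative} Radon measure $-\Delta d_\Omega$ against the continuous compactly supported function $V(d)(\varphi')^{p-1}\varphi|v|^p$. For $u \in C_0^\infty(\Omega)$ this pairing is unambiguous; extension to all of $W_0^{1,p}(\Omega,V,W)$ follows by density, together with the nonnegativity of both right-hand side terms (which permits passing to the limit via Fatou). A minor secondary issue, namely handling $\nabla(|v|^p)$ where $v$ may vanish, is resolved by the standard Sobolev chain rule for $p>1$.
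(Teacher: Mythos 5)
Your derivation of the identity and of the nonnegativity of $\d(u)$ is essentially the paper's own argument in different clothing: your Bregman remainder $R_p(A,B)$ with $A=\varphi'(d_\Omega)v\,\grad d_\Omega$, $B=\varphi(d_\Omega)\grad v$ is exactly the paper's $C_p\bigl(\grad u,\varphi(d_\Omega)\grad(u/\varphi(d_\Omega))\bigr)$, and your integration by parts against $X=V(d_\Omega)(\varphi'(d_\Omega))^{p-1}\varphi(d_\Omega)\grad d_\Omega$ together with the $p$-Bessel ODE is the same computation as in Theorem \ref{thm:bessel-pair-hardy-identity-general-domain} (the paper merely starts from the $W$-integral rather than from expanding $|\grad u|^p$). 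The treatment of $\Delta d_\Omega$ as a nonpositive distribution on weakly mean convex domains and the resulting sign of the measure term also match Corollary \ref{cor:improvements-corollary}.

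The genuine gap is in the equality case. From $\d(u)=0$ you correctly get $\grad v=0$, hence $u=c\,\varphi(d_\Omega)$, but you then assert that ``combined with the vanishing of the measure term and the constraint that $u$ lies in the $C_0^\infty$-closure, this constant must be zero'' without any argument. This does not follow from membership in $W_0^{1,p}(\Omega,V,W)$ alone: for a general $p$-Bessel pair on a bounded weakly mean convex domain, $c\,\varphi(d_\Omega)$ may well have finite weighted norm and vanish continuously at $\p\Omega$, so nothing formal excludes it from the closure. The paper's proof needs a substantive geometric step that you omit: if $c\neq0$ then the vanishing of $\gen{-\Delta d_\Omega,\,V(d_\Omega)(\varphi'/\varphi)^{p-1}(d_\Omega)|u|^p}$ against an everywhere positive integrand forces $\Delta d_\Omega\equiv0$ in $\Omega$; then hypoellipticity plus Motzkin--Bunt (Theorem \ref{thm:motzkins-theorem}) and Proposition \ref{prop:harmonic-implies-flat} (this is where the extra $C^2$ hypothesis of Corollary \ref{cor:improvements-corollary} enters) show $\Omega$ must be an unbounded halfspace or slab with flat boundary, and only then does one conclude that $c\,\varphi(d_\Omega)$, being constant on the unbounded hyperplanes parallel to $\p\Omega$, fails to be integrable, contradicting $u\in W_0^{1,p}(\Omega,V,W)$. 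Without this rigidity argument (which the introduction singles out as the key point: a nontrivial solution of $\d(u)=0$ forces $\Omega$ to contain a hyperplane), the claim ``$\d(u)=0$ iff $u=0$'' is unproven. A secondary, smaller point: both the extension of the exact identity from $C_0^\infty(\Omega)$ to the weighted space and the case $1<p<2$ at points where $A=0$ deserve a word, but these are handled at the same level of rigor as in the paper; the missing rigidity step is the real defect.
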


%

We note that Theorem \ref{thm:main-theorem1} will be stated in more details, including the exact form of $\d(u)$, and will be proved in Theorem \ref{thm:bessel-pair-hardy-identity-general-domain} and Corollary \ref{cor:improvements-corollary} in Section \ref{section:hardy-identities}.

By choosing specific $p$-Bessel pairs $(V, W)$, Theorem \ref{thm:main-theorem1} implies and improves results in the literature.
To obtain the existence result, we first prove that the existence of a nontrivial solution to $\d(u)=0$ implies $\Omega$ contains a hyperplane.
It is then easy to see that nontrivial solutions to $\d(u)=0$ cannot be integrable near $\p\Omega$; see the proof of Corollary \ref{cor:improvements-corollary}, where slightly more general hypotheses are assumed.

Next, Theorem \ref{thm:main-theorem2} pertains to establishing new Hardy inequalities for weights obtained by taking spherical means of Bessel pairs composed with the distance function to the boundary of a domain.
We also obtain geometrically interesting expressions for the remainder terms.

\begin{theoremintro}[Theorem \ref{thm:mean-bessel-pair-hardy-identities}]\label{thm:main-theorem2}
  Let $N,p,\Omega,\rho,(V,W),R,\varphi$ be above, let $(\tilde{V}_{M,p},W_{M,p})$ be the spherical means of $(V,W)$ given by \eqref{eq:spherical-means-of-bessel-pairs} and let
  \[
    \d_{M}(u) = \int\limits_{\Omega}\tilde{V}_{M,p}(x)|\grad{u(x)}|^{p}dx-\int\limits_{\Omega}W_{M,p}(x)|u(x)|^{p}dx
  \]
  be the remainder.
  If $\varphi$ is increasing, then $\d_{M}(u) \geq 0$ for $u \in C_{0}^{\oo}(\Omega)$; in particular
  \[
    \int\limits_{\Omega}W_{M,p}(x)|u(x)|^{p}dx \leq  \int\limits_{\Omega}\tilde{V}_{M,p}(x)|\grad{u(x)}|^{p}dx.
  \]
  Moreover, $\d_{M}(u)$ may be expressed as a distribution depending only on $u$, $\grad u$, $p$, $d_{\Omega}$, $V$ and $\varphi$.

\end{theoremintro}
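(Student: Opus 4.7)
My plan is to reduce the $N$-dimensional identity to a one-dimensional Bessel-pair Hardy identity applied along affine lines and then to integrate in the Davies--Tidblom spirit over all directions $\nu\in S^{N-1}$. The crucial observation is that, unlike Theorem \ref{thm:main-theorem1}, whose proof must handle the multivariate divergence of $\nabla d_\Omega$ (hence the weak mean convexity hypothesis), the mean-distance formulation avoids $d_\Omega$ entirely and therefore works on arbitrary proper subdomains once the 1D piece is in place.

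The first step is to establish the 1D Bessel-pair Hardy identity: for an open interval $(a,b)\subset\R$ with $(b-a)/2<R$ and any $w\in C_0^\infty((a,b))$,
\[
\int_a^b V(\delta_1(t))|w'(t)|^{p}\,dt - \int_a^b W(\delta_1(t))|w(t)|^{p}\,dt = R_1(w)\geq 0,
\]
where $\delta_1(t)=\min(t-a,b-t)$ is the 1D distance to the endpoints and $R_1(w)$ is a nonnegative distribution depending only on $w,w',p,V,\varphi$. I would derive this via the substitution $w=\varphi(\delta_1)v$, the ODE \eqref{BesselPair}, integration by parts, and the pointwise convexity inequality $|a+b|^{p}-|a|^{p}-p|a|^{p-2}ab\geq 0$ retained with its full slack; monotonicity of $\varphi$ fixes the signs.

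Next, I foliate $\Omega$ by lines and average. For each $\nu\in S^{N-1}$ and $x_0\in\nu^\perp$, the slice $\Omega\cap(x_0+\R\nu)$ decomposes into a disjoint union of open intervals on each of which the 1D distance to the endpoints is $\min(\rho_\nu,\rho_{-\nu})$ along the line. Applying the 1D identity to $t\mapsto u(x_0+t\nu)$, integrating $x_0$ over $\nu^\perp$ by Fubini so that the differential reassembles as $\nu\cdot\nabla u$, and then averaging over $\nu\in S^{N-1}$ with the normalizer
\[
\kappa_{N,p}=\frac{\sqrt{\pi}\,\Gamma((N+p)/2)}{\Gamma((p+1)/2)\Gamma(N/2)}
\]
yields the mean-distance identity. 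The classical formula $|\xi|^{p}=\kappa_{N,p}\int_{S^{N-1}}|\nu\cdot\xi|^{p}\,d\sigma(\nu)$ together with the definition \eqref{eq:spherical-means-of-bessel-pairs} reorganizes the gradient term into $\int_\Omega\tilde V_{M,p}|\nabla u|^{p}\,dx$ and the potential term into $\int_\Omega W_{M,p}|u|^{p}\,dx$; the total remainder $\delta_M(u)$ is then the double average of the nonnegative 1D remainders $R_1$, hence a nonnegative distribution in the stated data.

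The main obstacle is the 1D step: to extract an honest pointwise \emph{identity}, not merely an inequality, for general $p\neq 2$ one must carefully preserve the full pointwise slack in the $p$-convexity inequality above and package it as an explicit distributional remainder (the $p$-Bessel-pair analogue of the Ghoussoub--Moradifam $p=2$ expansion, extended to $p\neq 2$ in the spirit of Duy--Lam--Lu). Once this is in hand, the remaining averaging steps are pure Fubini plus the sphere-averaging identity and require no further geometric hypothesis on $\Omega$, which is precisely why Theorem \ref{thm:main-theorem2} drops the weak mean convexity assumption of Theorem \ref{thm:main-theorem1}.
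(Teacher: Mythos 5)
Your route is the paper's route: the one-dimensional Bessel-pair Hardy identity (Theorem \ref{thm:one-dimensional-hardy-identities}) applied on each connected component of $\Omega$ intersected with a line in direction $\nu$, then Fubini over the foliation and averaging over $S^{N-1}$, with nonnegativity of the assembled remainder giving the inequality. Two technical points that carry the real weight in the paper are glossed over in your sketch. First, you locate the ``main obstacle'' in preserving the $p$-convexity slack, but that term (the $C_{p}$ remainder) is nonnegative unconditionally (Lemma \ref{lem:cp-properties}); the hypothesis that $\varphi$ is increasing is needed only for the other piece of the one-dimensional remainder, namely the midpoint evaluation term $2V(R_{\ell})|\varphi'(R_{\ell})|^{p-2}\varphi'(R_{\ell})\varphi(R_{\ell})^{1-p}|u(m_{\ell})|^{p}$ produced at the non-differentiability point of the one-dimensional distance on each segment $\ell$. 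Your ``nonnegative distribution $R_{1}(w)$'' silently contains this Dirac mass, and the genuinely nontrivial step in the paper is to show that, summed over all segments of all lines, these point masses assemble into a Radon measure $\mu_{\nu}$ on the $\nu$-skeleton (Lemma \ref{lem:technical-lemma-one}, which needs a measurability argument and a bound on the number of segments meeting a compact set), yielding the spherical skeletal mean \eqref{eq:spherical-skeletal-mean}. Without this, the inequality $\d_{M}(u)\geq 0$ still follows by integrating the per-line inequality, but the claim that $\d_{M}(u)$ ``may be expressed as a distribution'' in the stated data is not justified.

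Second, your one-dimensional step assumes $(b-a)/2<R$ for every segment, but the ambient hypothesis you inherit from the setup is only $R>\rho$ (inradius) or $R=\infty$, and chords of $\Omega$ can be much longer than $2\rho$ (the paper notes this already for a non-square rectangle). The precise statement, Theorem \ref{thm:mean-bessel-pair-hardy-identities}, requires $R>\tfrac{1}{2}D_{\infty}(\Omega)$ with $D_{\infty}(\Omega)$ the essential diameter \eqref{eq:essential-diameter-definition}, exactly so that $(V,W)$ and $\varphi$ are defined at the relevant arguments along almost every line; you should either add this hypothesis or explain why your per-interval condition holds. Finally, note that your reorganization of the gradient term via $|\xi|^{p}=\kappa_{N,p}\int_{S^{N-1}}|\nu\cdot\xi|^{p}d\sigma(\nu)$ together with the $\nu$-dependent weight $V(\rho_{\nu}(x))$ is the same identification the paper makes in defining $\tilde{V}_{M,p}$, so no discrepancy there.
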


The exact form of $\d_{M}(u)$ can be found in Theorem \ref{thm:mean-bessel-pair-hardy-identities}. The proof of Theorem \ref{thm:mean-bessel-pair-hardy-identities} (and therefore, Theorem \ref{thm:main-theorem2}) will be provided in Section \ref{section:mean-distance-function-results}.

As an application of Theorem \ref{thm:mean-bessel-pair-hardy-identities}, we apply these inequalities to obtain lower bounds on the smallest eigenvalue for minus the Dirichlet Laplacian on a given domain $\Omega$.
The first bound of this type we consider was obtained by Davies in \cite{D} (see also the references therein) where they obtained
\begin{equation}
	H_{0}\geq\frac{N}{4\mu^{2}}
	\label{eq:introduction-spectral-lower-bound-by-davies}
\end{equation}
in the sense of bilinear forms, where where $H_{0}$ is minus the Dirichlet Laplacian on $\Omega$ and
\begin{equation}
	\mu = \sqrt{N} \sup\left\{ d_{\Omega,M,2}(x): x \in \Omega \right\}
	\label{eq:quasi-inradius}
\end{equation}
is the quasi-inradius of $\Omega$.
Note that the mean distance function $m$ used in \cite{D} is such that $m(x) = \sqrt{N} d_{\Omega,M,2}(x)$.

Before we continue the discussion, we bring to attention the fact that, since we are making use of $p$-Bessel pairs to obtain these results, and since $V$ and $W$ are defined on an interval $(0,R)$, we need $V(\rho_{\nu}(x))$ and $W(\rho_{\nu}(x))$ to be well-defined almost everywhere.
This leads us to introduce the following new type of diameter:
\[
D_{\oo}(\Omega)=\text{essential supremum of lengths of all lines contained in $\Omega$};
\]
see \eqref{eq:essential-diameter-definition} for a precise definition and see Section \ref{section:mean-distance-function-results} for further discussion about $D_{\oo}(\Omega)$.
Then, assuming $(V,W)$ is a $p$-Bessel pair on $(0,D_{\oo}(\Omega)/2)$, we have that $V(\rho_{\nu})$ and $W(\rho_{\nu})$ are defined almost everywhere on $\Omega$.

We use our results to provide a new lower bound for $H_{0}$ in terms of $R_{\oo}(\Omega)$ that greatly improves \eqref{eq:introduction-spectral-lower-bound-by-davies}; in fact, our results show that \eqref{eq:introduction-spectral-lower-bound-by-davies} is never sharp for any domain $\Omega$.
In particular, if $H_{0}$ denotes minus the Dirichlet Laplacian on $\Omega$, then we improve \eqref{eq:introduction-spectral-lower-bound-by-davies} in the spirit of Brezis and Marcus \cite{BM} by obtaining the following result
\begin{theoremintro}[Theorem \ref{cor:spectral-lower-bound-brezis-marcus-type}]
	Let $N,\Omega$ be as above. There holds
	\[
	H_{0}\geq\frac{N}{4\mu^{2}}+\frac{4N\la_{0}^{2}}{D_{\oo}(\Omega)^{2}}
	\]
	in the sense of bilinear forms.
	\label{thm:main-theorem3}
\end{theoremintro}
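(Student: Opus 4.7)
The plan is to apply Theorem~\ref{thm:main-theorem2} with $p=2$ to a Bessel pair engineered so that the resulting averaged inequality, after invoking the quasi-inradius bound, matches the claim. Davies' bound $H_{0}\geq N/(4\mu^{2})$ itself arises from the elementary pair $(V,W)=(1,\,1/(4t^{2}))$ with $\varphi(t)=\sqrt{t}$; to produce the additional spectral remainder I enhance this to
\[
V(t)=1,\qquad W(t)=\frac{1}{4t^{2}}+\frac{4\lambda_{0}^{2}}{D_{\infty}(\Omega)^{2}},
\]
on an interval $(0,R)$ large enough that $V(\rho_{\nu}(x))$ and $W(\rho_{\nu}(x))$ are defined a.e.\ on $\Omega$ (e.g.\ $R=D_{\infty}(\Omega)$). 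Substituting the ansatz $\varphi(t)=\sqrt{t}\,g(\sqrt{\kappa}\,t)$ with $\kappa=4\lambda_{0}^{2}/D_{\infty}(\Omega)^{2}$ into the defining ODE $\varphi''+W\varphi=0$ reduces it to Bessel's equation of order zero, so $\varphi(t)=\sqrt{t}\,J_{0}(\sqrt{\kappa}\,t)$. The constant $\lambda_{0}$ is then the largest value for which this $\varphi$ remains positive and increasing on $(0,R)$: positivity requires $\sqrt{\kappa}\,R<j_{0,1}$, the first positive zero of $J_{0}$, while monotonicity requires $J_{0}(s)>2sJ_{1}(s)$ (via $(sJ_{1}(s))'=sJ_{0}(s)$), and these two conditions pin $\lambda_{0}$ down as a universal constant.

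Theorem~\ref{thm:main-theorem2} then delivers $\int_{\Omega}\tilde{V}_{M,2}|\nabla u|^{2}\,dx\geq\int_{\Omega}W_{M,2}|u|^{2}\,dx$ for every $u\in C_{0}^{\infty}(\Omega)$. The key step is to identify the two spherical averages: linearity in $W$ splits $W_{M,2}$ into a $1/(4t^{2})$-piece (which, using $c_{N,2}=N$ and the definitional identity $d_{\Omega,M,2}(x)^{-2}=N\int_{S^{N-1}}\rho_{\nu}(x)^{-2}\,d\sigma(\nu)$, produces the Davies--Hardy weight $1/(4d_{\Omega,M,2}^{2})$) and a constant-$\kappa$ piece (contributing $N\kappa=4N\lambda_{0}^{2}/D_{\infty}(\Omega)^{2}$ after the same bookkeeping), while $V\equiv 1$ together with the standard identity $N\int_{S^{N-1}}|\partial_{\nu}u|^{2}\,d\sigma=|\nabla u|^{2}$ collapses the left-hand side to $\int_{\Omega}|\nabla u|^{2}$. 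The resulting inequality
\[
\int_{\Omega}|\nabla u|^{2}\,dx\geq\frac{1}{4}\int_{\Omega}\frac{|u|^{2}}{d_{\Omega,M,2}(x)^{2}}\,dx+\frac{4N\lambda_{0}^{2}}{D_{\infty}(\Omega)^{2}}\int_{\Omega}|u|^{2}\,dx,
\]
combined with the pointwise bound $d_{\Omega,M,2}(x)^{-2}\geq N/\mu^{2}$ immediate from the definition~\eqref{eq:quasi-inradius} of $\mu$, yields the announced quadratic-form lower bound for $H_{0}$.

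The main obstacle is verifying the monotonicity hypothesis on $\varphi$ in Theorem~\ref{thm:main-theorem2}: because $\sqrt{t}\,J_{0}(\sqrt{\kappa}\,t)$ eventually decreases before $\sqrt{\kappa}\,t$ reaches $j_{0,1}$, the monotonicity constraint (not mere positivity) is binding and dictates the exact universal constant $\lambda_{0}$ appearing in the statement. A secondary technical point is that $\rho_{\nu}(x)\leq D_{\infty}(\Omega)$ only holds for a.e.\ direction $\nu\in S^{N-1}$, so the spherical averaging must be justified at the level of almost-everywhere defined weights, which matches the essential-supremum framework surrounding the introduction of $D_{\infty}(\Omega)$ in the excerpt.
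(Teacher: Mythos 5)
Your overall route is the paper's own: the same $2$-Bessel pair $(V,W)=\bigl(1,\tfrac14 r^{-2}+4\la_{0}^{2}D_{\oo}(\Omega)^{-2}\bigr)$ with $\varphi(r)=r^{1/2}J_{0}\bigl(2\la_{0}r/D_{\oo}(\Omega)\bigr)$, the same spherical averaging through Theorem \ref{thm:mean-bessel-pair-hardy-identities}, the same computations $\tilde V_{M,2}=1/N$, $\Xi(N,2)=N$, $W_{M,2}=\tfrac{1}{4N}d_{\Omega,M,2}^{-2}+4\la_{0}^{2}D_{\oo}(\Omega)^{-2}$, and the same final step $d_{\Omega,M,2}\le\mu/\sqrt N$; this is precisely Corollary \ref{cor:brezis-marcus-type-inequality-for-mean-distance} followed by the short argument proving Theorem \ref{cor:spectral-lower-bound-brezis-marcus-type}.

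There is, however, a genuine gap at exactly the step you flag as "the main obstacle," and your proposed resolution is inconsistent with your own parameters. You put the Bessel pair on $(0,R)$ with $R=D_{\oo}(\Omega)$ (so that $V(\rho_{\nu})$, $W(\rho_{\nu})$ be defined a.e.) and then need $\varphi$ increasing on $(0,R)$ to invoke Theorem \ref{thm:main-theorem2}. But with $\kappa=4\la_{0}^{2}/D_{\oo}(\Omega)^{2}$ the function $t\mapsto\sqrt{t}\,J_{0}(\sqrt{\kappa}\,t)$ increases exactly while $\sqrt{\kappa}\,t\le\la_{0}$, i.e.\ only on $(0,D_{\oo}(\Omega)/2)$; on $(D_{\oo}(\Omega)/2,D_{\oo}(\Omega))$ it decreases (positivity survives because $2\la_{0}<z_{0}$, monotonicity does not). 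So as written the monotonicity hypothesis fails; and if you genuinely demanded it on all of $(0,D_{\oo}(\Omega))$ you would be forced to $\kappa\le\la_{0}^{2}/D_{\oo}(\Omega)^{2}$, degrading the spectral term to $N\la_{0}^{2}/D_{\oo}(\Omega)^{2}$, a loss of the factor $4$. The correct bookkeeping, and what the paper does, is to take the pair on $(0,D_{\oo}(\Omega)/2)$ (the hypothesis of Theorem \ref{thm:mean-bessel-pair-hardy-identities} is $R>\tfrac12 D_{\oo}(\Omega)$, the borderline case being covered by the remarks after Theorems \ref{thm:one-dimensional-hardy-identities} and \ref{thm:bessel-pair-hardy-identity-general-domain}): in the proof, the weights and $\varphi$ are only ever evaluated at the one-dimensional two-sided distance along each line segment, which is at most half the segment length and hence $\le D_{\oo}(\Omega)/2$ for a.e.\ line; in particular the skeletal-mean remainder involves $\varphi'$ only at midpoint values $\rho\le D_{\oo}(\Omega)/2$, and nonnegativity of $\varphi'$ there is exactly the content of Lamb's constant, so $\kappa=4\la_{0}^{2}/D_{\oo}(\Omega)^{2}$ is admissible. (Your worry about the one-sided $\rho_{\nu}$ reaching $D_{\oo}(\Omega)$ is moot for this pair, since $V\equiv1$ and this $W$ extend to all $r>0$.) With that correction the rest of your computation, including $d_{\Omega,M,2}^{-2}=N\int_{S^{N-1}}\rho_{\nu}^{-2}\,d\sigma$ and $d_{\Omega,M,2}^{-2}\ge N/\mu^{2}$, is correct and reproduces the paper's proof; a minor further point is that the monotonicity criterion $J_{0}(s)\ge 2sJ_{1}(s)$ comes from $J_{0}'=-J_{1}$ in $J_{0}(s)+2sJ_{0}'(s)\ge0$, not from the identity $(sJ_{1}(s))'=sJ_{0}(s)$.
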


Note that, to obtain the lower bound on the smallest eigenvalue of $H_{0}$, one can simply apply these bounds to the Rayleigh quotient for $H_{0}$.

Theorem \ref{cor:spectral-lower-bound-brezis-marcus-type} (Theorem \ref{thm:main-theorem3}) will be proved in Section \ref{section:mean-distance-function-results}.

The last theorem we emphasize uses the Hardy-Sobolev-Maz'ya inequality on certain geometric domains in conjunction with our Hardy identities to establish Sobolev inequalities for a family of (generally rough) Riemannian metrics that degenerate at the boundary of the domain.
This result and related results are contained and proved in Section \ref{section:sobolev-inequality}.
We say that $\Omega$ supports a Hardy-Sobolev-Maz'ya inequality provided there exists a positive constant $C_{HSM}$ such that
\begin{equation}
  \int\limits_{\Omega}|\grad{u}|^{2}dx-\frac{1}{4}\int\limits_{\Omega}\frac{|u|^{2}}{d_{\Omega}^{2}}dx\geq{C_{HSM}}\left( \int\limits_{\Omega}|u|^{\frac{2N}{N-2}}dx \right)^{\frac{N-2}{N}},\quad{u\in{C_{0}^{\oo}(\Omega)}}.
  \label{eq:hsm}
\end{equation}

\begin{theoremintro}[Theorem \ref{thm:sobolev-inequality-theorem}]
  \label{thm:main-theorem4}
  Suppose $\Omega$ supports the Hardy-Sobolev-Maz'ya inequality \eqref{eq:hsm} and $\Delta d_{\Omega} \geq 0$ in the sense of distributions.
  If $\Omega$ is endowed with the (generally rough) metric $g = d_{\Omega}^{\frac{2}{N-2}}|dx|^{2}$, then the following Sobolev inequality holds on $(\Omega,g)$:
  \[
    C_{HSM}\left( \int\limits_{\Omega} |v|^{2^{*}}dV_{g} \right)^{\frac{1}{2^{*}}} \leq  \left( \int\limits_{\Omega} |\grad_{g}v|_{g}^{2} dV_{g} \right)^{\frac{1}{2}}
  \]
  for all $v \in C_{0}^{\oo}(\Omega)$.
  Here, $2^{*} =  2N/(N-2)$.
\end{theoremintro}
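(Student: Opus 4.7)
The strategy is to reduce the Riemannian Sobolev inequality on $(\Omega,g)$ to a weighted Euclidean inequality and then invoke the hypothesis \eqref{eq:hsm} through the substitution $u=d_{\Omega}^{1/2}v$. Since $g=d_{\Omega}^{2/(N-2)}|dx|^{2}$ is conformal to the Euclidean metric, direct computation gives
\[ dV_{g}=d_{\Omega}^{N/(N-2)}\,dx,\qquad |\grad_{g}v|_{g}^{2}\,dV_{g}=d_{\Omega}\,|\grad v|^{2}\,dx, \]
while $|v|^{2^{*}}\,dV_{g}=d_{\Omega}^{N/(N-2)}|v|^{2^{*}}\,dx$. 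The target inequality is therefore equivalent to the weighted Euclidean bound
\[ C_{HSM}\Bigl(\int_{\Omega}d_{\Omega}^{N/(N-2)}|v|^{2^{*}}\,dx\Bigr)^{1/2^{*}}\leq\Bigl(\int_{\Omega}d_{\Omega}|\grad v|^{2}\,dx\Bigr)^{1/2} \]
for all $v\in C_{0}^{\oo}(\Omega)$.

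Next I would introduce $u:=d_{\Omega}^{1/2}v$, chosen so that $|u|^{2^{*}}=d_{\Omega}^{N/(N-2)}|v|^{2^{*}}$ matches the volume density precisely, and expand (using $|\grad d_{\Omega}|=1$ a.e.) to obtain
\[ |\grad u|^{2}-\tfrac{1}{4}\tfrac{u^{2}}{d_{\Omega}^{2}}=d_{\Omega}|\grad v|^{2}+v\,\grad d_{\Omega}\cdot\grad v. \]
The key algebraic point is that the two divergent contributions of size $v^{2}/(4d_{\Omega})$ cancel exactly, which is exactly what singles out the power $1/2$. Writing $v\,\grad d_{\Omega}\cdot\grad v=\tfrac{1}{2}\grad d_{\Omega}\cdot\grad(v^{2})$ and applying the distributional identity $\int_{\Omega}\grad d_{\Omega}\cdot\grad\varphi\,dx=-\langle\Delta d_{\Omega},\varphi\rangle$ with $\varphi=v^{2}$, we obtain
\[ \int_{\Omega}v\,\grad d_{\Omega}\cdot\grad v\,dx=-\tfrac{1}{2}\langle\Delta d_{\Omega},v^{2}\rangle\leq 0, \]
using the hypothesis $\Delta d_{\Omega}\geq 0$ as a distribution together with $v^{2}\geq 0$.

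Finally, since $v\in C_{0}^{\oo}(\Omega)$ is compactly supported and $d_{\Omega}^{1/2}$ is Lipschitz and bounded below on $\operatorname{supp}v$, the function $u$ lies in $W_{0}^{1,2}(\Omega)$, where \eqref{eq:hsm} extends by density. Applying \eqref{eq:hsm} to $u$ and chaining with the identity and sign estimate above yields
\[ C_{HSM}\Bigl(\int_{\Omega}|u|^{2^{*}}\,dx\Bigr)^{2/2^{*}}\leq\int_{\Omega}|\grad u|^{2}\,dx-\tfrac{1}{4}\int_{\Omega}\tfrac{u^{2}}{d_{\Omega}^{2}}\,dx\leq\int_{\Omega}d_{\Omega}|\grad v|^{2}\,dx, \]
which, via the conformal identifications of the first paragraph, is the desired inequality after taking square roots. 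I do not anticipate any step being a serious obstacle; the only points requiring mild care are checking that the distributional pairing $\langle\Delta d_{\Omega},v^{2}\rangle$ agrees with honest integration by parts against the Lipschitz function $d_{\Omega}$, and that \eqref{eq:hsm} remains valid for the Lipschitz test function $u=d_{\Omega}^{1/2}v$, both handled by standard density/mollification arguments.
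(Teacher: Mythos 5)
Your proposal is correct and follows essentially the same route as the paper: the substitution $u=d_{\Omega}^{1/2}v$, the conformal identities $dV_{g}=d_{\Omega}^{N/(N-2)}dx$ and $|\grad_{g}v|_{g}^{2}\,dV_{g}=d_{\Omega}|\grad v|^{2}\,dx$, an application of \eqref{eq:hsm} to $u$, and the sign hypothesis $\Delta d_{\Omega}\geq 0$ to discard the cross term. The only cosmetic difference is that you derive the needed $p=2$ Hardy identity inline (pairing $\Delta d_{\Omega}$ with the smooth compactly supported test function $v^{2}$), whereas the paper invokes its general Theorem \ref{thm:bessel-pair-hardy-identity-general-domain} with the Bessel pair $\left(1,\tfrac14 r^{-2}\right)$ and $\varphi=r^{1/2}$.
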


The proof of Theorem \ref{thm:sobolev-inequality-theorem} (Theorem \ref{thm:main-theorem4}) will be given in Section \ref{section:sobolev-inequality}.

Examples of such domains include halfspaces and domains of the form $\Omega = \R^{N} \setminus \overline{U}$, where $U$ is a weakly mean convex domain containing the origin and $d_{\Omega}$ satisfies \eqref{eq:super-harm-for-unbounded-domains}; e.g., $\Omega = \R^{N} \setminus \overline{B(0,1)}$ and the exterior of the solid cylinder $\R \times B(0,1) \subset \R^{N+1}$, $N>1$.

\medskip

Before we continue we point out that this paper is a continuation of the program initiated by Lam, Lu and Zhang in \cite{LLZ,LLZ20} of using Bessel pairs to obtain Hardy identities.
Since then, the present authors along with other collaborators have made contributions in this direction for several settings.
See \cite{FLL} for Hardy identities in the group setting and see \cite{FLL-JFA} and \cite{FLLM} for Hardy identities in hyperbolic spaces, the negatively curved and Riemannian manifolds, and \cite{Wang} for Hardy's identities in the Dunkl setting. The $L^p$ Hardy identities and inequalities
have also been established with $p$-Bessel pairs in \cite{DLL}. These type of $L^2$ Hardy identities using the $2$-Bessel pairs have also been extended to the second order case on hyperbolic spaces in \cite{BGR-CVPDE}. We also refer the interested reader to \cite{BFT04, BGGP20, DPP17}, for instance, for some related results.

It is also important to mention that formulating Hardy inequalities in terms of Bessel pairs was first introduced by Ghoussoub and Moradifam.
Indeed, Ghoussoub and Moradifam introduced in \cite{GM} (see also their book \cite{GM2}) the notion of Bessel pairs to enhance and unify several improved Hardy type inequalities in the literature for the Euclidean setting.
They established the following important characterization theorem.

\begin{theoremalph}
  Assume $N\geq1$, let $0<R\leq\oo$, and let $V$ and $W$ be $C^{1}$ functions on $(0,R)$ such that $\int\limits_{0}^{R}\frac{1}{r^{N-1}V(r)}dr=\oo$ and $\int\limits_{0}^{R}r^{N-1}V(r)dr<\oo$.
  If $(r^{N-1}V,r^{N-1}W)$ is a Bessel pair on $(0,R)$, then for all $u\in{C_{0}^{\oo}(B_{R}(0))}$, there holds
  \begin{equation}
    \int\limits_{B_{R}(0)}V(|x|)|\grad{u}|^{2}\geq\int\limits_{B_{R}(0)}W(|x|)|u|^{2}dx.
    \label{eq:ghoussob-moradifam-hardy}
  \end{equation}
  Also, if \eqref{eq:ghoussob-moradifam-hardy} holds for all $u\in{C_{0}^{\oo}(B_{R}(0))}$, then $(r^{N-1}V,r^{N-1}cW)$ is a Bessel pair on $(0,R)$ for some $c>0$.
\end{theoremalph}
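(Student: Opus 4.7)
\emph{Proof plan.} The statement splits into two directions of rather different flavor. The forward direction is algebraic and proceeds by the classical factorization $u=\varphi(|x|)v(x)$, while the converse is analytic and exploits the reduction to a one-dimensional problem via radial test functions, combined with Sturm--Liouville oscillation theory.

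For the forward direction, let $\varphi>0$ be a $C^{1}$ solution of $(r^{N-1}V\varphi')'+r^{N-1}W\varphi=0$ on $(0,R)$. Given $u\in C_{0}^{\oo}(B_{R}(0)\setminus\{0\})$, I write $u=\varphi(|x|)v(x)$ and expand
\[
|\grad u|^{2}=(\varphi')^{2}v^{2}+2\varphi\varphi'\,v\,\tfrac{x}{|x|}\cdot\grad v+\varphi^{2}|\grad v|^{2},
\]
with $\varphi,\varphi'$ evaluated at $|x|$. Multiplying by $V(|x|)$ and integrating, the cross term can be rewritten as $\int V\varphi\varphi'\tfrac{x}{|x|}\cdot\grad(v^{2})\,dx$. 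Integrating by parts and using the radial divergence formula $\div(f(r)\tfrac{x}{|x|})=r^{1-N}(r^{N-1}f)'$ together with the Bessel pair ODE converts this into $-\int v^{2}[V(\varphi')^{2}-W\varphi^{2}]\,dx$. After cancellation I obtain the identity
\[
\int_{B_{R}(0)}V(|x|)|\grad u|^{2}\,dx-\int_{B_{R}(0)}W(|x|)|u|^{2}\,dx=\int_{B_{R}(0)}V(|x|)\varphi(|x|)^{2}|\grad v|^{2}\,dx\geq 0.
\]
To pass to general $u\in C_{0}^{\oo}(B_{R}(0))$, I remove a neighborhood of the origin by radial cut-offs: the hypothesis $\int_{0}^{R}(r^{N-1}V)^{-1}dr=\oo$ is exactly the condition that $\{0\}$ has zero $V$-weighted $2$-capacity, so cut-offs $\chi_{\e}$ can be built whose $V$-weighted gradient energy vanishes as $\e\to 0$, while $\int_{0}^{R}r^{N-1}V\,dr<\oo$ controls the tails.

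For the converse, testing the assumed inequality against radial functions $u(x)=f(|x|)$ with $f\in C_{0}^{\oo}(0,R)$ reduces matters to the one-dimensional Hardy inequality
\[
\int_{0}^{R}r^{N-1}V(r)|f'|^{2}\,dr\geq\int_{0}^{R}r^{N-1}W(r)|f|^{2}\,dr.
\]
Let $c>0$ denote the sharp constant in this inequality; by hypothesis $c\geq 1$. By the Allegretto--Piepenbrink principle (equivalently, Sturm's non-oscillation theorem) applied to the operator $\mathcal{L}\psi=-(r^{N-1}V\psi')'-cr^{N-1}W\psi$, non-negativity of the associated quadratic form at the sharp constant $c$ is equivalent to the existence of a strictly positive $C^{1}$ solution $\psi$ to $(r^{N-1}V\psi')'+cr^{N-1}W\psi=0$ on $(0,R)$. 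The integrability assumptions on $V$ play the role of endpoint conditions in Weyl's limit-point classification, guaranteeing a solution that is positive on all of $(0,R)$. This makes $(r^{N-1}V,cr^{N-1}W)$ a Bessel pair, which is the claimed conclusion.

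The principal obstacle is the converse direction, where one must invoke oscillation theory carefully with exactly the right endpoint hypotheses in order to pass from non-negativity of the quadratic form to a strictly positive solution of the Euler--Lagrange ODE with a strictly positive constant $c$. The forward direction, in contrast, is essentially a direct substitution-and-integration-by-parts calculation once the factorization $u=\varphi v$ is written down.
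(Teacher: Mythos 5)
This statement is Theorem A in the paper, which is quoted from Ghoussoub--Moradifam \cite{GM,GM2} and is not proved in the paper itself, so there is no in-paper argument to compare against line by line. Your plan is, in essence, the original Ghoussoub--Moradifam proof: the sufficiency direction via the ground-state substitution $u=\varphi(|x|)v$ (which, for $p=2$, is exactly the mechanism behind the paper's own Theorem \ref{thm:bessel-pair-hardy-identity-general-domain}, since $C_{2}(\grad u,\varphi\grad(u/\varphi))=\varphi^{2}|\grad(u/\varphi)|^{2}$), and the necessity direction by restricting to radial test functions and invoking the equivalence between nonnegativity of a one-dimensional Sturm--Liouville form and existence of a positive solution. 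Both halves are sound in outline. Two remarks on the points you leave asserted: (i) in the cutoff step, the identification of $\int_{0}^{R}(r^{N-1}V)^{-1}dr=\oo$ with vanishing weighted capacity of the origin uses that the divergence occurs at $r=0$ (which is how the hypothesis is meant and used in \cite{GM}); this is where the boundary term $\e^{N-1}V(\e)\varphi(\e)\varphi'(\e)\int_{S^{N-1}}v^{2}d\sigma$ must be killed, and your radial-cutoff energy estimate $\bigl(\int_{\e}^{\e'}(r^{N-1}V)^{-1}dr\bigr)^{-1}\to0$ is the right quantitative input; (ii) in the converse, the appeal to Weyl limit-point theory is unnecessary: nonnegativity of the form on $C_{0}^{\oo}(0,R)$ already gives disconjugacy, hence a positive (principal) solution on the open interval, and in fact you may simply take $c=1$, since the assumed inequality with constant $1$ already makes the form with weight $r^{N-1}W$ nonnegative. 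With those two points made precise, the argument is complete and coincides with the known proof rather than offering a genuinely different route.
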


\medskip

We now recall results in the literature and point out connections with our results.
The reader should refer to the Main Results and Proofs section (Section \ref{main-results}) for precise statements of our results.
We also point out that any result labeled with a Latin letter (e.g., Theorem \ref{thm:brezis-marcus-lewis-li-li}) indicates a result known in the literature.

We begin by pointing out that, when studying the classical Hardy inequality \eqref{eq:hardy-inequality-general-domain}, we are primarily interested in two cases.
Namely, when $d=d_{\Omega}$ and $\Omega$ is weakly mean convex, or when $d = d_{\Omega,M,p}$ and $\Omega$ is assumed to be a general proper subdomain of $\R^{N}$ without any assumed boundary regularity.
(In the latter case, we elect in accordance with \cite{frank2020consequences} to call \eqref{eq:hardy-inequality-general-domain} the Davies-Hardy-Tidblom inequality.)
In the general domain case with $d=d_{\Omega,M,p}$, Davies introduced $d_{\Omega,M,2}$ and established \eqref{eq:hardy-inequality-general-domain} for $p=2$ in \cite{D}, and Tidblom extended this result to any $p$ satisfying $1<p<\oo$ in \cite{Ti} and introduced $d_{\Omega,M,p}$.
They both showed that $c_{p}$ can be taken to be $\left( \frac{p-1}{p} \right)^{p}$.
In case $d=d_{\Omega}$, then inequality \eqref{eq:hardy-inequality-general-domain} holds for all domains $\Omega$ only when $N<p<\oo$; this was established by Lewis in \cite{L}.
In case $1<p\leq{N}$, the validity of \eqref{eq:hardy-inequality-general-domain} for a general domain $\Omega$ and finding the corresponding sharp constant $c_{p}$ is a more delicate problem which depends on the geometry and regularity of $\p\Omega$.

For instance, in case $\Omega$ is a convex domain which is $C^{2}$ in a neighborhood of a boundary point, then \eqref{eq:hardy-inequality-general-domain} is sharp with $c_{p}=\left( \frac{p-1}{p} \right)^{p}$; see \cite{MMP,MS}.
When $\Omega$ is convex and $C^{2}$ and $p=2$, Brezis and Marcus established in \cite{BM} improvements of \eqref{eq:hardy-inequality-general-domain} by adding nontrivial nonnegative terms to the right side of \eqref{eq:hardy-inequality-general-domain}.
More precisely, Brezis and Marcus established the following theorem.
\begin{theoremalph}
  \label{thm:brezis-marcus}
  For every smooth domain of class $C^{2}$, there exists $\la(\Omega)\in\R$ such that
  \[
    \int\limits_{\Omega}|\grad{u}|^{2}dx-\frac{1}{4}\int\limits_{\Omega}\frac{|u|^{2}}{d_{\Omega}^{2}}dx\geq\la(\Omega)\int\limits_{\Omega}|u|^{2}dx,\quad{u\in{C_{0}^{\oo}(\Omega)}}.
  \]
  Moreover, for convex domains, there holds
  \[
    \la(\Omega)\geq\frac{1}{4\diam(\Omega)^{2}}.
  \]
\end{theoremalph}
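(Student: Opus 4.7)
The plan is to deduce the inequality, at least in the convex case with the stated diameter bound, as a direct corollary of Theorem \ref{thm:main-theorem1} by selecting the right $2$-Bessel pair. Concretely, take $V \equiv 1$ and $W(r) = \frac{1}{4r^{2}} + \la$ for a constant $\la > 0$ to be optimized. Provided one can exhibit a positive, increasing $\varphi$ on $(0,R)$ (with $R$ exceeding the inradius of $\Omega$) solving the Bessel pair ODE $\varphi'' + W\varphi = 0$, and provided $\Omega$ is weakly mean convex (which is automatic for convex $\Omega$), Theorem \ref{thm:main-theorem1} applied with $p=2$ yields
\[
  \int_{\Omega}|\grad u|^{2}\,dx \;\geq\; \int_{\Omega}\!\left(\frac{1}{4\, d_{\Omega}^{2}} + \la\right)\!|u|^{2}\,dx,
\]
which is precisely the Brezis--Marcus inequality with constant $\la(\Omega) = \la$.

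To find the best admissible $\la$, I would solve $\varphi'' + \bigl(\frac{1}{4r^{2}} + \la\bigr)\varphi = 0$ via the Liouville substitution $\varphi(r) = \sqrt{r}\, w(r)$, which reduces the ODE to Bessel's equation of order zero, $r w'' + w' + \la r w = 0$. The relevant solution is $w(r) = J_{0}(\sqrt{\la}\, r)$, so $\varphi(r) = \sqrt{r}\, J_{0}(\sqrt{\la}\, r)$, which is positive on $(0, j_{0,1}/\sqrt{\la})$ with $j_{0,1} \approx 2.405$ the first positive zero of $J_{0}$. For convex $\Omega$, the inradius satisfies $\rho \leq \diam(\Omega)/2 =: D/2$, so choosing $\la = 1/(4D^{2})$ makes $\varphi$ positive on an interval comfortably containing $(0,\rho)$. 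The remaining requirement $\varphi' \geq 0$ translates (using $J_{0}' = -J_{1}$) into $f(s) := J_{0}(s) - 2s\, J_{1}(s) \geq 0$ for $s \in (0, \rho/(2D)] \subset (0,1/2]$; a short computation with $J_{1}' = J_{0} - J_{1}/s$ gives $f'(s) = -J_{1}(s) - 2s\, J_{0}(s) < 0$ on $(0,1/2]$, while $f(0) = 1$ and $f(1/2) = J_{0}(1/2) - J_{1}(1/2) > 0$, so $f$ is positive throughout.

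The main obstacle, in my view, is the first assertion — existence of \emph{some} $\la(\Omega) \in \R$ for an arbitrary $C^{2}$ domain where convexity (and hence weak mean convexity) may fail, so that Theorem \ref{thm:main-theorem1} does not apply globally. I would handle this by localization: choose $\e > 0$ small enough that $d_{\Omega}$ is $C^{2}$ on the tubular neighborhood $\Omega_{\e} = \{d_{\Omega} < \e\}$ (possible by the $C^{2}$ regularity of $\p\Omega$), so that the mean curvature of the parallel surfaces is bounded there. Splitting a test function via a cutoff at level $\e$, on $\Omega_{\e}$ the one-dimensional Hardy identity along normal geodesics produces the singular term with only a bounded curvature correction, while on $\{d_{\Omega}\geq\e\}$ the weight $1/d_{\Omega}^{2}$ is itself bounded and any negative constant absorbs the contribution. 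Combining both pieces yields a finite (possibly negative) $\la(\Omega)$ as required.
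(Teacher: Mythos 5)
First, a point of reference: the paper does not prove Theorem \ref{thm:brezis-marcus} at all — the Latin-letter label marks it as a result quoted from the literature (Brezis--Marcus \cite{BM}) — so there is no in-paper proof to match; your proposal has to stand on its own. The convex half of your argument does stand, and it is exactly in the spirit of the paper's own applications of the Bessel-pair machinery: with $V\equiv 1$, $W(r)=\tfrac{1}{4r^{2}}+\la$, $\varphi(r)=\sqrt{r}\,J_{0}(\sqrt{\la}\,r)$ (Example \ref{B2}) and $\la=1/(4\diam(\Omega)^{2})$, your monotonicity check is correct ($J_{0}(s)-2sJ_{1}(s)>0$ indeed holds for $s\le 1/2$, and in fact up to Lamb's constant $\la_{0}\approx 0.94$), and the inequality then follows from Theorem \ref{thm:bessel-pair-hardy-identity-general-domain} together with Corollary \ref{cor:improvements-corollary}, whose actual hypothesis $-\varphi'\Delta d_{\Omega}\ge 0$ holds distributionally for every convex domain (no boundary smoothness needed, so you need not route through "weakly mean convex"). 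Note also that the paper's Corollary \ref{cor:avk-wirths-improvmenet} with $\la=0$ already yields the stronger inradius bound $\la_{0}^{2}/\d_{0}^{2}\ge 4\la_{0}^{2}/\diam(\Omega)^{2}>1/(4\diam(\Omega)^{2})$, so your choice of pair recovers the convex assertion with room to spare.

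The general $C^{2}$ case, however, contains a genuine gap. The pivotal claim — that on the collar $\Omega_{\e}$ the one-dimensional Hardy identity along normal geodesics produces the singular term ``with only a bounded curvature correction'' — is not accurate. Transferring the 1D inequality in normal coordinates brings in the Jacobian $\theta(r,\sigma)$, equivalently the term $-\tfrac12\gen{\Delta d_{\Omega},\,u^{2}/d_{\Omega}}$ in identity \eqref{eq:main-hardy-identity-one}: while $\Delta_{G} d_{\Omega}$ (resp.\ $\p_{r}\log\theta$) is bounded on a uniform collar of a bounded $C^{2}$ domain, it is multiplied by $u^{2}/d_{\Omega}$, so the correction is of size $\int_{\Omega_{\e}}u^{2}/d_{\Omega}$, not $C\int u^{2}$. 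A crude absorption $u^{2}/d_{\Omega}\le \eta\,u^{2}/d_{\Omega}^{2}+C_{\eta}u^{2}$ is not available, since it would lower the sharp constant $\tfrac14$, which is the whole content of the theorem. An additional idea is required, e.g.\ absorb $\int_{\Omega_{\e}}u^{2}/d_{\Omega}$ into the positive remainder $\int_{\Omega_{\e}}d_{\Omega}\,|\grad(d_{\Omega}^{-1/2}u)|^{2}$ using the elementary bound $\int_{0}^{\e}w^{2}\,dr\le \e\int_{0}^{\e}r\,|w'|^{2}\,dr$ for $w$ vanishing at $r=\e$ (available after your cutoff, with $\e$ small relative to the curvature bound), or use the Brezis--Marcus substitution $d_{\Omega}^{1/2}e^{-\sigma d_{\Omega}}$ with $\sigma$ large; one should also use an IMS-type partition ($\chi^{2}+\eta^{2}=1$) so the localization error is $-C_{\e}\int u^{2}$, and note that the cut-locus part $\Delta_{\Sigma}d_{\Omega}$ in \eqref{eq:distributional-laplacian-decomposition} has the favorable sign. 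With such an absorption your outline closes, but as written that step is missing and the statement it replaces is false.
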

Brezis and Marcus asked if $\la(\Omega)$ can be replaced by a constant depending on the volume $|\Omega|$ alone, and this was answered affirmatively in \cite{HOL,Ti}; in \cite{Ti}, Tidblom extended Theorem \ref{thm:brezis-marcus} to $1<p<\oo$.
Moreover, we mention that Avkhadiev and Wirths determined in \cite{AW} a lower bound of $\la(\Omega)$ in terms of the inradius for convex domains (see Theorem \ref{thm:avk-wirths-theorem}).
When $\Omega$ is assumed to be weakly mean convex, then the results of Brezis, Marcus and Tidblom were extended to this setting by Lewis, Li and Li in \cite{LLL}, and they also showed that \eqref{eq:hardy-inequality-general-domain} is sharp with $c_{p}=\left( \frac{p-1}{p} \right)^{p}$ being the best constant.
We point out that Lewis, Li and Li also provided examples (based on \cite{MMP}) of domains which are not weakly mean convex and which are such that \eqref{eq:hardy-inequality-general-domain} fails for any $c_{n+1}>0$ when $p=n+1$ (exterior of a ball) or when $c_{p}=\left( \frac{p-1}{p} \right)^{p}$ (ellipsoidal shell) when $d=d_{\Omega}$.
We collect the positive results into the following theorem stated for the case of weakly mean convex domains (i.e., as presented in \cite{LLL}).
Beware that \cite{LLL} uses the opposite sign convention for mean curvature.

\begin{theoremalph}
  \label{thm:brezis-marcus-lewis-li-li}
  Let $\Omega\subset\R^{N}$, $N\geq2$, be a weakly mean convex domain and let $1<p<\oo$.
  Then, for $u\in{C_{0}^{\oo}(\Omega)}$, there holds
  \[
    \int\limits_{\Omega}|\grad{u}|^{p}dx\geq\left( \frac{p-1}{p} \right)^{p}\int\limits_{\Omega}\frac{|u|^{p}}{d_{\Omega}^{p}}dx+\la(N,p,\Omega)\int\limits_{\Omega}|u|^{p}dx,
  \]
  where
  \[
    \la(N,p,\Omega)=\left( \frac{p-1}{p} \right)^{p-1}\inf_{\Omega\setminus{S}}\frac{-\Delta{d_{\Omega}}}{d_{\Omega}^{p-1}}\geq\frac{p}{(N-1)^{p-1}}|H_{0}|^{p},
  \]
  $S$ is the singular set of $d_{\Omega}$ and $H_{0}$ is the supremum of the mean curvature of $\p\Omega$.
\end{theoremalph}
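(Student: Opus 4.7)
The plan is to derive Theorem \ref{thm:brezis-marcus-lewis-li-li} as a direct application of the $p$-Bessel pair machinery of Theorem \ref{thm:main-theorem1}. Take the specific pair $(V,W)=(1,\,c_p r^{-p})$ on $(0,\oo)$, where $c_p=\left(\tfrac{p-1}{p}\right)^{p}$, with proposed extremal $\varphi(r)=r^{(p-1)/p}$. A direct check shows $(|\varphi'|^{p-2}\varphi')'+c_p r^{-p}\varphi^{p-1}\equiv 0$ on $(0,\oo)$ and that $\varphi$ is positive and increasing, so Theorem \ref{thm:main-theorem1} applies on any weakly mean convex $\Omega$ and yields the identity \eqref{eq:intro-hardy-identity-one} with nonnegative remainder $\d(u)$. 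This already produces the classical sharp Hardy term $c_p\int_{\Omega}|u|^p/d_{\Omega}^{p}\,dx$; the remaining task is to extract from $\d(u)$ the extra curvature contribution $\la(N,p,\Omega)\int_{\Omega}|u|^p\,dx$.

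To this end, I would unfold $\d(u)$ using the explicit distributional formula promised by Theorem \ref{thm:main-theorem1}. Setting $\phi=d_{\Omega}^{(p-1)/p}$ and using $|\grad d_{\Omega}|=1$ a.e.\ on $\Omega\setminus S$, a direct computation in the regular set gives
\[
-\operatorname{div}\!\bigl(|\grad\phi|^{p-2}\grad\phi\bigr)
= c_p\,\phi^{p-1} d_{\Omega}^{-p}
+ \left(\tfrac{p-1}{p}\right)^{p-1}\phi^{p-1}\,\frac{-\Delta d_{\Omega}}{d_{\Omega}^{p-1}}.
\]
Dividing by $\phi^{p-1}$, pairing with $|u|^p$, and invoking the $L^p$ Picone-type inequality
\[
\int_{\Omega}|\grad u|^p\,dx \;\geq\; \int_{\Omega}\frac{-\operatorname{div}(|\grad\phi|^{p-2}\grad\phi)}{\phi^{p-1}}\,|u|^p\,dx
\]
produces exactly the inequality of Theorem \ref{thm:brezis-marcus-lewis-li-li} with $\la(N,p,\Omega)=\left(\tfrac{p-1}{p}\right)^{p-1}\inf_{\Omega\setminus S}(-\Delta d_{\Omega})/d_{\Omega}^{p-1}$. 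The weakly mean convex hypothesis enters precisely at this step: it guarantees $-\Delta d_{\Omega}\geq 0$ as a Radon measure on all of $\Omega$, so the additional term is a genuinely nonnegative contribution and the infimum is meaningful.

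For the geometric lower bound $\la(N,p,\Omega)\geq \frac{p}{(N-1)^{p-1}}|H_0|^p$, I would use the tube formula expressing $-\Delta d_{\Omega}(x)$ in terms of the principal curvatures $\{\kappa_i(y)\}$ at the foot $y=\pi(x)\in\p\Omega$, combined with the power-mean inequality applied to the $N-1$ resulting terms to compare with the (sign-converted) mean curvature $(N-1)H(y)$. The main obstacle will be the careful distributional treatment of the singular set $S$ of $d_{\Omega}$: although $-\Delta d_{\Omega}$ extends to a nonnegative Radon measure on $\Omega$ under weak mean convexity, one must justify that the Picone-type integration by parts retains its positivity across $S$ and that no negative singular contribution on $S$ is lost in the infimum. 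The standard workaround is to approximate by a smooth mollification of $d_{\Omega}$, or equivalently to truncate away from $S_\varepsilon=\{x:\dist(x,S)<\varepsilon\}$, apply the regular calculation, and let $\varepsilon\to 0$ using the monotone convergence of the curvature measure.
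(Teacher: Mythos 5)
The paper itself does not prove this statement: it is quoted as a known result of Lewis--Li--Li \cite{LLL} (after \cite{BM,Ti}), so there is no internal proof to compare against; the paper only shows that its identities recover and improve it. Your route is, however, exactly the mechanism the paper uses for that purpose: your pair $\left(1,\left(\tfrac{p-1}{p}\right)^{p}r^{-p}\right)$ with $\varphi=r^{(p-1)/p}$ is Example \ref{B1} with $\la=0$, your Picone-type step is precisely the integration by parts in the proof of Theorem \ref{thm:bessel-pair-hardy-identity-general-domain} (the nonnegative $C_{p}$-term there is the exact difference between the Picone inequality and the identity), and dropping the nonnegative remainder in Corollary \ref{cor:weighted-hardy-identity-for-domains} together with $-\Delta d_{\Omega}\geq0$ gives your displayed inequality with $\la(N,p,\Omega)$ as defined. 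Two points you should make explicit rather than defer to ``mollification'': the pairing is against the distributional Laplacian while the infimum defining $\la$ is taken only over $\Omega\setminus S$, so you must check that the singular part of $-\Delta d_{\Omega}$ enters with a favorable sign (for $C^{2,1}$ boundaries this is the decomposition \eqref{eq:distributional-laplacian-decomposition}--\eqref{eq:distributional-laplacian-cut-locus}, whose cut-locus term is nonnegative here because $\varphi'>0$); and for merely weakly mean convex domains the fact that $-\Delta d_{\Omega}\geq0$ distributionally is itself a nontrivial input from \cite{LLL}.

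The genuine gap is the curvature bound $\la(N,p,\Omega)\geq\frac{p}{(N-1)^{p-1}}|H_{0}|^{p}$. The tube formula plus your power-mean (Jensen) comparison with the mean curvature only gives $-\Delta d_{\Omega}(x)\geq\frac{(N-1)|H(y)|}{1-d_{\Omega}(x)|H(y)|}\geq(N-1)|H_{0}|$, and dividing by $d_{\Omega}^{p-1}$ then yields nothing of the form $|H_{0}|^{p}$ --- indeed no positive lower bound at all without control on $d_{\Omega}$. The missing ingredient is the constraint $1+d_{\Omega}(x)H(y)>0$ coming from Lemma \ref{lem:laplace-curvature-identity}, i.e. $s:=d_{\Omega}(x)|H(y)|<1$ whenever $H(y)<0$; then
\[
\frac{-\Delta d_{\Omega}(x)}{d_{\Omega}(x)^{p-1}}\geq\frac{(N-1)|H(y)|^{p}}{(1-s)s^{p-1}}\geq p\left(\frac{p}{p-1}\right)^{p-1}(N-1)|H(y)|^{p},
\]
since $\max_{0<s<1}(1-s)s^{p-1}=\frac{1}{p}\left(\frac{p-1}{p}\right)^{p-1}$, and $|H(y)|\geq|H_{0}|$ because $H\leq0$ and $H_{0}=\sup H$ (if $H(y)=0$ the bound is trivial and then $H_{0}=0$). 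This gives $\la(N,p,\Omega)\geq p(N-1)|H_{0}|^{p}$, which implies the stated constant with the paper's normalized mean curvature and coincides with it when $H_{0}$ denotes the unnormalized mean curvature as in \cite{LLL}. With this optimization supplied and the distributional bookkeeping above, your proposal is a correct reconstruction of the cited result.
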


Now, since $\la(N,p,\Omega)\geq0$ for weakly mean convex domains, we have that Theorem \ref{thm:brezis-marcus-lewis-li-li} implies the following sharp $L^{p}$-Hardy inequality for weakly mean convex domains:
\begin{equation}
  \label{eq:introduction-weakly-mean-convex-hardy-inequality}
  \int\limits_{\Omega}|\grad{u}|^{p}dx\geq\left( \frac{p-1}{p} \right)^{p}\int\limits_{\Omega}\frac{|u|^{p}}{d_{\Omega}^{p}}dx.
\end{equation}
In Theorem \ref{thm:bessel-pair-hardy-identity-general-domain}, we establish a family of Hardy identities in terms of $p$-Bessel pairs which imply, among other things, strong improvements of Theorem \ref{thm:brezis-marcus-lewis-li-li} and the Hardy inequality \eqref{eq:introduction-weakly-mean-convex-hardy-inequality} in the case that either $u$ is assumed to be compactly supported away from the singular set of $d_{\Omega}$, or in the case that the singular set of $d_{\Omega}$ is sufficiently small.
These identities take the form
\begin{align*}
  &\int\limits_{\Omega}V(d_{\Omega}(x))|\grad{u(x)}|^{p}dx-\int\limits_{\Omega}W(d_{\Omega}(x))|u(x)|^{p}dx\\
  &=\int\limits_{\Omega}V(d_{\Omega}(x))C_{p}\left( \grad{u(x)},\varphi(d_{\Omega}(x))\grad\left( \frac{u(x)}{\varphi(d_{\Omega}(x))} \right) \right)dx\\
  &- \gen{\Delta d_{\Omega}, V\circ d_{\Omega} \left|\frac{\varphi'\circ d_{\Omega}}{\varphi\circ d_{\Omega}}\right|^{p-2}\frac{\varphi'\circ d_{\Omega}}{\varphi\circ d_{\Omega}} |u|^{p}}
\end{align*}
where $1<p<\oo$, where $(V,W)$ is a $p$-Bessel pair with a positive solution $\varphi$, where $\Delta d_{\Omega}$ is the distributional Laplacian of $d_{\Omega}$ (note that $d_{\Omega}$ is a priori only Lipschitz) and where
\[
  C_{p}(x,y)=|x|^{p}-|x-y|^{p}-p|x-y|^{p-2}(x-y)\cdot{y},\quad{x,y\in\R^{N}};
\]
see Section \ref{preliminaries} for more about this $C_{p}$ function and the distributional Laplacian of $d_{\Omega}$.
We point out that, when $\p\Omega$ is sufficiently regular, the distributional Laplacian $\Delta d_{\Omega}$ decomposes into two distributions $\Delta_{G}d_{\Omega}$ and $\Delta_{\Sigma} d_{\Omega}$, where $\Delta_{G} d_{\Omega}$ is the function obtained by taking the Laplacian of $d_{\Omega}$ restricted to the good set $G$ (the largest open set where $d_{\Omega}$ is $C^{2}$), and $\Delta_{\Sigma}d_{\Omega}$ is a distribution given by an $(N-1)$-dimensional integral over the boundary $\p\Omega$.
We emphasize that the appearance of a lower dimensional integral in the difference
\[
  \int\limits_{\Omega}|\grad u(x)|^{p} dx - \int\limits_{\Omega}\frac{|u(x)|^{p}}{d_{\Omega}(x)^{p}}dx
\]
is a feature of Hardy inequalities with distance function to the boundary of a domain and that such a term is not generally present for Hardy inequalities in which the distance function is to a given point.
It is also worth mentioning that this lower dimensional integral vanishes for certain $p$-Bessel pairs, when $\Sigma$ has $(N-1)$-dimensional Hausdorff measure zero, when $d_{\Omega}$ is harmonic or when we restrict our attention to functions $u \in C_{0}^{\oo} (G)$.

By applying Theorem \ref{thm:bessel-pair-hardy-identity-general-domain} to an appropriate $p$-Bessel pair and to a weakly mean convex $\Omega$ with $C^{2,1}$ boundary, we obtain the following Hardy identities which generalize and improve \eqref{eq:introduction-weakly-mean-convex-hardy-inequality}:
\begin{equation}
  \begin{aligned}
    \int\limits_{\Omega}\frac{|\grad{u(x)}|^{p}}{d_{\Omega}(x)^{\la}}dx&-\bigg\vert\frac{p+\la-1}{p}\bigg\vert^{p}\int\limits_{\Omega}\frac{|u(x)|^{p}}{d_{\Omega}(x)^{\la+p}}\\
    &=\int\limits_{\Omega}\frac{1}{d_{\Omega}(x)^{\la}}C_{p}\left( \grad{u(x)},d_{\Omega}(x)^{\frac{p+\la}{p}}\grad\left( d_{\Omega}(x)^{-\frac{\la+p}{p}}u(x) \right) \right)dx\\
    &-\bigg\vert\frac{p+\la-1}{p}\bigg\vert^{p-1}\gen{\Delta d_{\Omega},|u|^{p} d_{\Omega}^{1-\la-p}},
  \end{aligned}
  \label{eq:weakly-mean-convex-lp-hardy-identity-general-powers}
\end{equation}
where $1<p<\oo$ and $\la\in\R$ are such that $p+\la-1>0$.
This is the content of Corollary \ref{cor:weighted-hardy-identity-for-domains}.
We also point out that the distribution $\Delta d_{\Omega}$ has an explicit geometric expression in terms of curvature and geodesic normal coordinates.
Also, observing that, when $d_{\Omega}$ is distributionally superharmonic on $\Omega$, i.e., when $-\Delta{d_{\Omega}}\geq0$ as a distribution, then
\begin{align*}
  0\leq\int\limits_{\Omega}\frac{1}{d_{\Omega}(x)^{\la}}&C_{p}\left( \grad{u(x)},d_{\Omega}(x)^{\frac{p+\la}{p}}\grad\left( d_{\Omega}(x)^{-\frac{\la+p}{p}}u(x) \right) \right)dx\\
  &-\bigg\vert\frac{p+\la-1}{p}\bigg\vert^{p-1}\gen{\Delta d_{\Omega},|u|^{p} d_{\Omega}^{1-\la-p}},
\end{align*}
it is clear that \eqref{eq:weakly-mean-convex-lp-hardy-identity-general-powers} with $\la=0$ is a strong improvement of \eqref{eq:hardy-inequality-general-domain} for weakly mean convex domains.
In fact, by \cite[Theorem 1.7]{LLL}, the superharmonicity of $d_{\Omega}$ in $\Omega$ is equivalent to $\Omega$ being weakly mean convex.

We note that, in case $p=2$ and $\la=0$, identity \eqref{eq:weakly-mean-convex-lp-hardy-identity-general-powers} recovers Lemma 4.1 in \cite{LLL}.
Indeed, to see this, note
\begin{align*}
  \int\limits_{\Omega}\bigg\vert\grad{u(x)}-\frac{u(x)\grad{d_{\Omega}(x)}}{2d_{\Omega}(x)}\bigg\vert^{2}dx&=\int\limits_{\Omega}d_{\Omega}(x)\bigg\vert\grad\left( \frac{u(x)}{d_{\Omega}(x)^{\frac{1}{2}}} \right)\bigg\vert^{2}dx\\
  &=\int\limits_{\Omega}C_{2}\left( \grad{u(x)},d_{\Omega}(x)^{\frac{1}{2}}\grad\left( \frac{u(x)}{d_{\Omega}(x)^{\frac{1}{2}}} \right) \right)dx.
\end{align*}
On the other hand, the identities for general $p$ with $1<p<\oo$ are not given in \cite{LLL} and therefore \eqref{eq:weakly-mean-convex-lp-hardy-identity-general-powers} is new for $p\neq2$.

In addition to \eqref{eq:weakly-mean-convex-lp-hardy-identity-general-powers}, in Theorem \ref{thm:bessel-pair-hardy-identity-general-domain} we also establish Hardy identities in which $\grad$ is replaced by $\grad{d_{\Omega}}\cdot\grad$, i.e., we establish Hardy identities of the following form:
\begin{align*}
  &\int\limits_{\Omega}{V(d_{\Omega}(x))}|\nabla{d_{\Omega}(x)}\cdot\nabla{u(x)}|^{p}dx -  \int\limits_{\Omega}W\left( d_{\Omega}(x) \right) |u(x)|^{p} dx \\
  &=\int\limits_{\Omega}{V(d_{\Omega}(x))}C_{p}\left(\nabla{d_{\Omega}(x)}\cdot{u(x)},\varphi(d_{\Omega}(x))\nabla{d_{\Omega}(x)}\cdot\nabla\left( \frac{u(x)}{\varphi(d_{\Omega}(x))} \right)\right)dx\\
  &- \gen{\Delta d_{\Omega},V \circ d_{\Omega}\left|\frac{\varphi' \circ d_{\Omega}}{\varphi \circ d_{\Omega}}\right|^{p-2}\frac{\varphi' \circ d_{\Omega}}{\varphi \circ d_{\Omega}} |u|^{p}}.
\end{align*}
Note that $|\grad{d_{\Omega}}\cdot\grad{u}|\leq|\grad{u}|$, and so our identities imply geometrically significant refinements of \eqref{eq:introduction-weakly-mean-convex-hardy-inequality}.
Moreover, $\grad d_{\Omega}$ also has a (almost everywhere) geometric expression and so the operator $\grad d_{\Omega} \cdot $ is more or less explicit; see Theorem \ref{thm:gradient-of-distance-function}.

\medskip

In \cite{AW}, Avkhadiev and Wirths studied the best constant $c(N)$, depending on the dimension $N$ only, in the inequality
\begin{equation}
  \int\limits_{\Omega}|\grad{u(x)}|^{2}dx\geq\frac{1}{4}\int\limits_{\Omega}\frac{|u(x)|^{2}}{d_{\Omega}(x)}dx+\frac{c(N)}{\d_{0}^{2}}\int\limits_{\Omega}|u(x)|^{2}dx,
  \label{eq:avk-wirth-inequality-with-general-c}
\end{equation}
where $u\in{C_{0}^{\oo}(\Omega)}$, $\d_{0}=\d_{0}(\Omega)$ is the inradius of $\Omega$ and $\Omega$ is restricted to the class of open convex domains with $\d_{0}<\oo$.
(See the discussion below about the Hardy inequalities of Brezis-Marcus-type for more information about inequalities of this type.)
We state the results of Avkhadiev and Wirths in the following theorem.

\begin{theoremalph}
  \label{thm:avk-wirths-theorem}
  Let $\Omega$ be an open and convex set in $\R^{N}$.
  If the inradius $\d_{0}=\d_{0}(\Omega)$ is finite, then
  \begin{equation}
    \int\limits_{\Omega}|\grad{u}|^{2}dx\geq\frac{1}{4}\int\limits_{\Omega}\frac{|u|^{2}}{d_{\Omega}^{2}}dx+\frac{\la_{0}^{2}}{\d_{0}^{2}}\int\limits_{\Omega}|u|^{2}dx,\quad{u\in{C_{0}^{\oo}(\Omega)}},
    \label{eq:avk-wirths-inequality}
  \end{equation}
  where $\la_{0}$ is the Lamb constant, i.e., if $J_{0}$ the Bessel function \eqref{eq:bessel-function}, then $\la_{0}$ is the first positive zero of
  \begin{equation}
    J_{0}(r)+2rJ_{0}'(r)=0.
    \label{eq:lambs-equation}
  \end{equation}
  Moreover, the constant $\la_{0}^{2}$ is the largest constant so that \eqref{eq:avk-wirths-inequality} holds for all such convex domains and it is attained by domains of the form $(a,b)\times\R^{N-1}$ with $N\geq1$ and their linear transformations.
\end{theoremalph}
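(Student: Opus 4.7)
The plan is to deduce this inequality from Theorem \ref{thm:main-theorem1} (taking $p=2$) by producing a $2$-Bessel pair $(V,W)$ adapted to the Lamb constant $\la_{0}$. I would take $V \equiv 1$ and $W(r) = \frac{1}{4r^{2}} + \frac{\la_{0}^{2}}{\d_{0}^{2}}$, and propose as candidate positive solution
\[
\varphi(r) = \sqrt{r}\, J_{0}\!\left(\frac{\la_{0} r}{\d_{0}}\right).
\]
Using the Bessel equation $J_{0}''(x) + x^{-1}J_{0}'(x) + J_{0}(x) = 0$, a direct computation verifies
\[
\varphi''(r) + \left(\frac{1}{4r^{2}} + \frac{\la_{0}^{2}}{\d_{0}^{2}}\right)\varphi(r) = 0,
\]
so $(V,W)$ is indeed a $2$-Bessel pair with solution $\varphi$. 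Since $\la_{0}$ lies strictly below the first positive zero $j_{0,1}$ of $J_{0}$ (as Lamb's function $x \mapsto J_{0}(x) + 2xJ_{0}'(x)$ equals $1$ at $x=0$ and is negative at $x=j_{0,1}$, forcing a zero in between), $\varphi$ is strictly positive on $(0,R)$ for some $R > \d_{0}$.

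Next I would check that $\varphi$ is nondecreasing on the full range of $d_{\Omega}$, which is $(0,\d_{0}]$. Differentiating,
\[
\varphi'(r) = \frac{1}{2\sqrt{r}}\left[J_{0}\!\left(\frac{\la_{0} r}{\d_{0}}\right) + \frac{2\la_{0} r}{\d_{0}}\, J_{0}'\!\left(\frac{\la_{0} r}{\d_{0}}\right)\right],
\]
and by the very definition of $\la_{0}$ via \eqref{eq:lambs-equation}, the bracket is strictly positive for $\la_{0} r/\d_{0} \in (0,\la_{0})$ (i.e.\ $r \in (0,\d_{0})$) and vanishes at $r = \d_{0}$. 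Since only values of $\varphi$ on the range of $d_{\Omega}$ enter the Hardy identity underpinning Theorem \ref{thm:main-theorem1}, this is exactly the monotonicity actually needed. A convex domain is weakly mean convex, so Theorem \ref{thm:main-theorem1} applied with $p=2$ and this Bessel pair yields
\[
\int\limits_{\Omega}|\grad u|^{2}dx \geq \int\limits_{\Omega}\left(\frac{1}{4 d_{\Omega}^{2}} + \frac{\la_{0}^{2}}{\d_{0}^{2}}\right)|u|^{2}dx,
\]
which is precisely \eqref{eq:avk-wirths-inequality}.

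The main obstacle, and the only piece not essentially automatic from Theorem \ref{thm:main-theorem1}, is the sharpness claim that $\la_{0}^{2}$ cannot be enlarged and is attained on slabs. For this I would specialize to $\Omega = (-\d_{0}, \d_{0}) \times \R^{N-1}$, on which $d_{\Omega}(x) = \d_{0} - |x_{1}|$, and test with a tensorized family $u_{\e}(x_{1},x') = \chi_{\e}(x_{1})\varphi(d_{\Omega}(x_{1}))\eta_{\e}(x')$, where $\eta_{\e} \in C_{0}^{\oo}(\R^{N-1})$ is rescaled so that $\|\grad \eta_{\e}\|_{L^{2}}/\|\eta_{\e}\|_{L^{2}} \to 0$, and $\chi_{\e}$ is a cutoff near $\{x_{1} = \pm\d_{0}\}$ that handles the $\sqrt{r}$-singularity of $\varphi$ at the boundary so that $u_{\e} \in C_{0}^{\oo}(\Omega)$. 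Transverse derivatives contribute $o(1)$, and the longitudinal Rayleigh quotient reduces to the one-dimensional eigenvalue problem on $(0,\d_{0})$ whose ground state, by construction, is $\varphi$ with eigenvalue $\la_{0}^{2}/\d_{0}^{2}$; the Dirichlet condition at $0$ and Neumann condition at $\d_{0}$ match $\varphi(0) = 0$ and $\varphi'(\d_{0}) = 0$ respectively. Controlling the error from $\chi_{\e}$ near the boundary, where the $1/(4r^{2})$ potential is singular, is the delicate part of this saturation argument.
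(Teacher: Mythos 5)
Your derivation of the inequality \eqref{eq:avk-wirths-inequality} is correct and is exactly the route the paper's machinery provides: the pair $(1,\tfrac14 r^{-2}+\la_{0}^{2}\d_{0}^{-2})$ with $\varphi(r)=\sqrt{r}\,J_{0}(\la_{0}r/\d_{0})$ is the $\la=0$ case of Example \ref{B2}, and feeding it into Theorem \ref{thm:bessel-pair-hardy-identity-general-domain}/Corollary \ref{cor:improvements-corollary} (using $-\Delta d_{\Omega}\geq 0$ for convex $\Omega$ and $\varphi'\geq 0$ on the range $(0,\d_{0}]$ of $d_{\Omega}$, which is all the nonnegativity of the distributional term requires) is precisely what the paper does in Corollary \ref{cor:avk-wirths-improvmenet} with $\la=0$. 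Be aware, though, that the paper does not prove Theorem \ref{thm:avk-wirths-theorem} at all: it is a quoted result of Avkhadiev--Wirths \cite{AW} (Latin-letter label), and the paper's identities are presented as an improvement/extension of it, so for the first half your proposal is a legitimate alternative derivation consistent with the paper's framework rather than a reproduction of a proof the paper contains.

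The genuine gap is in the sharpness/"attained by slabs" half, which is part of the stated theorem but which you only sketch. Two points. First, the phrase that the longitudinal problem has "ground state, by construction, $\varphi$" is not literally correct: near the boundary $\varphi(d)\sim\sqrt{d}$, so $|\,(\varphi(d))'\,|^{2}\sim\tfrac{1}{4d}$ and $\tfrac14\varphi(d)^{2}/d^{2}\sim\tfrac{1}{4d}$ are each non-integrable; $\varphi(d)$ is not in the energy space and is not an admissible minimizer, so the whole weight of the argument falls on the approximation step. Second, that step --- the cutoff $\chi_{\e}$ near $\{d=0\}$ and the verification that the excess it creates tends to zero even though the two singular integrals individually diverge --- is exactly what you defer, and without it the claim that the slab saturates $\la_{0}^{2}$ is not established. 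It is fixable, and in fact the paper's own one-dimensional identity makes it painless: applying Theorem \ref{thm:one-dimensional-hardy-identities} (in the form of Corollary \ref{cor:one-dimensional-hardy-identity-corollary-2}, where the midpoint term vanishes because $\varphi'(\d_{0})=0$) to $u=\chi_{\delta}\,\varphi(d)$ shows that the excess of the Rayleigh quotient over $\la_{0}^{2}/\d_{0}^{2}$ equals $\int \varphi(d)^{2}|\chi_{\delta}'|^{2}\,dt$ divided by $\int u^{2}$, and a logarithmic cutoff ($\chi_{\delta}=\log(d/\delta^{2})/\log(1/\delta)$ on $\delta^{2}<d<\delta$) makes this $O(1/\log(1/\delta))\to 0$; combined with your transverse rescaling $\eta_{\e}$ this completes the saturation. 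As written, however, the sharpness assertion of the theorem remains unproved in your proposal.
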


Our methods allow us to find the exact remainder terms for \eqref{eq:avk-wirths-inequality} and extend Theorem \ref{thm:avk-wirths-theorem} to weakly mean convex domains.
In particular, we prove in Corollary \ref{cor:avk-wirths-improvmenet} the following Hardy identity (and its weighted versions):

\begin{equation}
  \begin{aligned}
    \int\limits_{\Omega}|\grad{u(x)}|^{2}dx&-\frac{1}{4}\int\limits_{\Omega}\frac{|u(x)|^{2}}{d_{\Omega}(x)^{2}}dx=\frac{\la_{0}^{2}}{\d_{0}^{2}}\int\limits_{\Omega}|u(x)|^{2}dx\\
    &+\int\limits_{\Omega}\frac{|J_{0}(\frac{\la_{0}}{\d_{0}}d_{\Omega}(x))|^{2}}{d_{\Omega}(x)^{-2}}\bigg\vert\grad\left( \frac{d_{\Omega}(x)^{-\frac{1}{2}}u(x)}{J_{0}(\frac{\la_{0}}{\d_{0}}d_{\Omega}(x))} \right)\bigg\vert^{2}dx\\
    &-\frac{1}{2}\gen{\Delta d_{\Omega},u^{2}\frac{1}{d_{\Omega}} \left[ \frac{1}{2}\frac{1}{d_{\Omega}}+\frac{\la_{0}}{\d_{0}}\frac{J_{0}'\left( \frac{\la_{0}}{\d_{0}}d_{\Omega} \right)}{J_{0}\left( \frac{\la_{0}}{\d_{0}}d_{\Omega} \right)} \right]},
  \end{aligned},
  \label{eq:introduction-identity-for-avk-wirths}
\end{equation}
where $\d_{0}<\oo$ is the inradius of $\Omega$ and $u \in C_{0}^{\oo}(\Omega)$.
As before, when $\Omega$ is weakly mean convex, then the remainder term is nonnegative (see proof of Corollary \ref{cor:avk-wirths-improvmenet}).

In fact, this identity gives a new interpretation and reasoning as to why the constant $\la_{0}^{2}$ ought to appear.
Indeed, $(0,\la_{0})$ is the largest interval of the form $(0,a)$, $a\in\R_{>0}$, for which
\[
  \frac{1}{2}\frac{1}{d_{\Omega}}+\frac{\la_{0}}{\d_{0}}\frac{J_{0}'\left( \frac{\la_{0}}{\d_{0}}d_{\Omega} \right)}{J_{0}\left( \frac{\la_{0}}{\d_{0}}d_{\Omega} \right)}
\]
is nonnegative.
With weak mean convexity, this forces the remainder term to be nonnegative, and, on any larger interval, the remainder term will change sign.

We also mention that identity \eqref{eq:introduction-identity-for-avk-wirths} gives a direct means for determining the existence of extremizers and their functional form since, if \eqref{eq:avk-wirths-inequality} were in fact an identity, then the remainder term must be zero:
\begin{align*}
  0=&\int\limits_{\Omega}\frac{|J_{0}(\frac{\la_{0}}{\d_{0}}d_{\Omega}(x))|^{2}}{d_{\Omega}(x)^{-2}}\bigg\vert\grad\left( \frac{d_{\Omega}(x)^{-\frac{1}{2}}u(x)}{J_{0}(\frac{\la_{0}}{\d_{0}}d_{\Omega}(x))} \right)\bigg\vert^{2}dx\\
  &-\frac{1}{2}\int\limits_{\Omega}\frac{|u(x)|^{2}}{d_{\Omega}(x)}\Delta{d_{\Omega}(x)}\left[ \frac{1}{2}\frac{1}{d_{\Omega}(x)}+\frac{\la_{0}}{\d_{0}}\frac{J_{0}'\left( \frac{\la_{0}}{\d_{0}}d_{\Omega}(x) \right)}{J_{0}\left( \frac{\la_{0}}{\d_{0}}d_{\Omega}(x) \right)} \right]dx.
\end{align*}
This implies
\[
  \bigg\vert\grad\left( \frac{d_{\Omega}(x)^{-\frac{1}{2}}u(x)}{J_{0}(\frac{\la_{0}}{R}d_{\Omega}(x))} \right)\bigg\vert^{2}=0
\]
and so
\[
  u(x)=cd_{\Omega}(x)^{\frac{1}{2}}J_{0}\left( \frac{\la_{0}}{R}d_{\Omega}(x) \right)
\]
for some $c\in\R$.
Next observe that, since $-\Delta d_{\Omega} \geq0$, this implies that
\[
  \gen{\Delta d_{\Omega},u^{2}\frac{1}{d_{\Omega}} \left[ \frac{1}{2}\frac{1}{d_{\Omega}}+\frac{\la_{0}}{\d_{0}}\frac{J_{0}'\left( \frac{\la_{0}}{\d_{0}}d_{\Omega} \right)}{J_{0}\left( \frac{\la_{0}}{\d_{0}}d_{\Omega} \right)} \right]}=0.
\]
If $\Omega$ is $C^{2}$, then, by Corollary \ref{cor:bounded-implies-not-bounded}, we conclude $\Omega$ is a strip or halfspace and hence $u$ is not integrable on $\Omega$ thereby establishing the nonexistence of extremizers for \eqref{eq:avk-wirths-inequality} for \textit{every} $C^{2}$-convex domain.



\medskip

We point out that, to prove Theorem \ref{thm:brezis-marcus} for convex domains and $1<p<\oo$, Tidblom first established the following inequality in the spirit of Davies \cite{D} and Brezis and Marcus \cite{BM} for general domains.

\begin{theoremalph}
  \label{thm:davies-tidblom-theorem}
  Let $\Omega\subsetneq\R^{N}$ be an open domain.
  Then, for $u\in{C_{0}^{\oo}(\Omega})$, there holds
  \begin{equation}
    \begin{aligned}
      \int\limits_{\Omega}|\grad{u}|^{p}dx\geq\left( \frac{p-1}{p} \right)^{p}\int\limits_{\Omega}\frac{|u(x)|^{p}}{d_{\Omega,M,p}(x)^{p}}dx+a(p,N)\int\limits_{\Omega}\frac{|u(x)|^{p}}{|\Omega_{x}|^{\frac{p}{N}}}dx,
    \end{aligned}
    \label{eq:tidblom-davies-inequality}
  \end{equation}
  where
  \begin{align*}
    \Omega_{x}&=\left\{ y\in\Omega:x+t(y-x)\in\Omega,\forall{t\in[0,1]} \right\}\\
    a(p,n)&=\frac{(p-1)^{p+1}}{p^{p}}\left( \frac{\omega_{N-1}}{N} \right)^{\frac{p}{N}}\frac{\sqrt{\pi}\Gamma\left( \frac{N+p}{2} \right)}{\Gamma\left( \frac{p+1}{2} \right)\Gamma\left( \frac{N}{2} \right)}.
  \end{align*}
\end{theoremalph}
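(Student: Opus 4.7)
The plan is to reduce the Davies--Tidblom inequality to a one-dimensional Hardy inequality with a Brezis--Marcus type remainder, and then average over directions $\nu \in S^{N-1}$. This is Davies's strategy for $p=2$, extended to all $p \in (1,\oo)$ by Tidblom; it works for arbitrary proper subdomains $\Omega \subsetneq \R^{N}$ because only one-dimensional chord traces of $u$ enter the argument, and $d_{\Omega,M,p}$ is defined precisely so that the spherical average of $\rho_{\nu}(x)^{-p}$ reproduces $d_{\Omega,M,p}(x)^{-p}$.

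First I would establish an improved one-dimensional Hardy inequality: for $R > 0$ and $f \in C_{0}^{\oo}((0,R))$,
\[
  \int_{0}^{R} |f'(t)|^{p}\, dt \geq \Bigl(\tfrac{p-1}{p}\Bigr)^{p} \int_{0}^{R} \frac{|f(t)|^{p}}{t^{p}}\, dt + K(p)\, R^{-p} \int_{0}^{R} |f(t)|^{p}\, dt,
\]
for an explicit constant $K(p)$ that will eventually match $a(p,N)$. One obtains this via the substitution $f(t) = t^{(p-1)/p}\, g(t)$, the pointwise algebraic inequality $|a+b|^{p} \geq |a|^{p} + p|a|^{p-2} a b + c_{p}|b|^{p}$ (with the appropriate Clarkson-type refinement in the regime $1 < p < 2$), and integration by parts using $g(R) = 0$ to eliminate the cross term. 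The leftover weighted integral $\int_{0}^{R} t^{p-1} |g'(t)|^{p}\, dt$ is then bounded below by $K(p) R^{-p} \int_{0}^{R} |f|^{p}\, dt$ via a weighted Poincar\'e estimate on $(0,R)$.

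Next comes the slicing-and-averaging step. Fix $\nu \in S^{N-1}$, slice $\Omega$ by lines parallel to $\nu$, and on each component of each slice parameterize by arclength so that $t$ measures the distance to the first exit point in direction $-\nu$; thus $t = \rho_{-\nu}(x)$ and the chord length equals $\rho_{\nu}(x) + \rho_{-\nu}(x)$. Apply the one-dimensional inequality to $u$ along each chord, integrate over all parallel chords, and then over $\nu$ against the normalized measure $d\sigma$. By Fubini and the isotropy identity
\[
  \int_{S^{N-1}} |v \cdot \nu|^{p}\, d\sigma(\nu) = \frac{\Gamma\!\left(\tfrac{p+1}{2}\right)\Gamma\!\left(\tfrac{N}{2}\right)}{\sqrt{\pi}\, \Gamma\!\left(\tfrac{N+p}{2}\right)}\, |v|^{p}, \qquad v \in \R^{N},
\]
the left side becomes a constant multiple of $\int_{\Omega} |\grad{u}|^{p}\, dx$, and by the very definition of $d_{\Omega,M,p}$ the first term on the right becomes the same constant multiple of $\int_{\Omega} |u|^{p}/d_{\Omega,M,p}^{p}\, dx$. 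Clearing the common isotropy constant produces the sharp Hardy term $\bigl(\tfrac{p-1}{p}\bigr)^{p} \int_{\Omega} |u|^{p}/d_{\Omega,M,p}^{p}\,dx$.

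Finally, the remainder after averaging is an integral of $|u|^{p}$ against a spherical average of negative powers of the one-sided chord lengths. To express it through $|\Omega_{x}|$, use the polar formula $|\Omega_{x}| = \frac{\omega_{N-1}}{N} \int_{S^{N-1}} \rho_{\nu}(x)^{N}\, d\sigma(\nu)$, which follows from integrating the indicator of the star-shaped kernel $\Omega_{x}$ in polar coordinates centered at $x$, together with Jensen's inequality for the convex map $s \mapsto s^{-p/N}$:
\[
  \Bigl(\tfrac{\omega_{N-1}}{N}\Bigr)^{p/N} |\Omega_{x}|^{-p/N} = \Bigl(\int_{S^{N-1}} \rho_{\nu}^{N}\, d\sigma\Bigr)^{-p/N} \leq \int_{S^{N-1}} \rho_{\nu}^{-p}\, d\sigma.
\]
Tracking the accumulated constants $K(p)$, $\bigl(\omega_{N-1}/N\bigr)^{p/N}$, and the isotropy factor $\frac{\sqrt{\pi}\,\Gamma((N+p)/2)}{\Gamma((p+1)/2)\Gamma(N/2)}$ then recovers exactly $a(p,N)$. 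The hardest part is calibrating $K(p) = (p-1)^{p+1}/p^{p}$ in the one-dimensional inequality so that all constants line up; for $1 < p < 2$ this calibration requires the sharper Clarkson-type pointwise estimate rather than the na\"ive Taylor expansion of $|a+b|^{p}$, since in that range no universal positive $c_{p}$ works in the elementary inequality cited above.
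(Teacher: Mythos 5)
You should first note that the paper does not prove this statement at all: Theorem \ref{thm:davies-tidblom-theorem} carries a Latin letter, the paper's convention for results quoted from the literature (here Tidblom \cite{Ti}, extending Davies \cite{D}). The closest in-paper material is Theorem \ref{thm:mean-bessel-pair-hardy-identities} and Corollary \ref{cor:mean-distance-hardy-identity}, which run the same slice-along-chords-and-average scheme, but through the $p$-Bessel pair identities of Theorem \ref{thm:one-dimensional-hardy-identities}; their remainders are the exact $C_{p}$ term plus the skeletal mean $\mathcal{S}_{\Omega}[\cdot]$, not the $|\Omega_{x}|^{-p/N}$ term. Your outline is indeed the classical Davies--Tidblom route, and the individual ingredients you cite -- Fubini slicing, the isotropy identity $\int_{S^{N-1}}|v\cdot\nu|^{p}d\sigma(\nu)=\Xi(N,p)^{-1}|v|^{p}$, and the polar formula $|\Omega_{x}|=\frac{\omega_{N-1}}{N}\int_{S^{N-1}}\rho_{\nu}(x)^{N}d\sigma(\nu)$ with Jensen -- are correctly stated.

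The genuine gap is in how your averaged remainder is converted into the $|\Omega_{x}|^{-p/N}$ term, and it breaks the constant. Your one-dimensional lemma has remainder $K(p)R^{-p}\int|f|^{p}$ with $R$ the \emph{full chord length}, so after slicing and averaging the remainder is $K(p)\,\Xi(N,p)\int_{\Omega}|u|^{p}\int_{S^{N-1}}(\rho_{\nu}+\rho_{-\nu})^{-p}d\sigma\,dx$. But your Jensen step is applied to $\int_{S^{N-1}}\rho_{\nu}^{-p}d\sigma$, which is (up to $\Xi(N,p)$) exactly the main Hardy weight $d_{\Omega,M,p}^{-p}$, not the remainder; as written the last step does not act on the quantity you actually produced. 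To relate the chord-length average to $|\Omega_{x}|$ you need, besides Jensen, something like $(\rho_{\nu}+\rho_{-\nu})^{N}\leq 2^{N-1}(\rho_{\nu}^{N}+\rho_{-\nu}^{N})$, which costs a factor $2^{-p}$; matching $a(p,N)$ would then force the one-dimensional constant to be $2^{p}(p-1)^{p+1}/p^{p}$, not the $(p-1)^{p+1}/p^{p}$ you propose to calibrate, and that strengthened 1D inequality is precisely the unproven technical heart of your sketch (the substitution $f=t^{(p-1)/p}g$ plus a pointwise convexity inequality yields \emph{some} constant, and for $1<p<2$ you acknowledge but do not supply the refinement needed). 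Tidblom's actual one-dimensional lemma avoids this loss by keeping a finer remainder built from both one-sided distances (mixed terms of the type $\tau_{1}^{1-p}\tau_{2}^{-1}|u|^{p}$ and its symmetric partner, whose sum dominates $2^{p+1}(\tau_{1}+\tau_{2})^{-p}|u|^{p}$ by AM--GM); it is this structure that lets the constant $a(p,N)$ survive the passage to $|\Omega_{x}|^{-p/N}$. So either prove the chord-length lemma with the $2^{p}$-enlarged constant, or prove and use the finer two-sided lemma; as it stands the proposal does not deliver the stated $a(p,N)$.
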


Theorem \ref{thm:davies-tidblom-theorem} is of course an extension and improvement (in the spirit of Brezis and Marcus) of the result of Davies (i.e., \eqref{eq:hardy-inequality-general-domain} with $p=2$ and $d=d_{\Omega,M,2}$).
In the same spirit, we establish the following identities and inequalities in terms of $p$-Bessel pairs:
\begin{align*}
  \int\limits_{\Omega}& \tilde{V}_{M,p}(x) |\grad u(x)|^{p}dx  = \int\limits_{\Omega} W_{M,p}(x) |u(x)|^{p} dx +2\mathcal{S}_{\Omega}\left[ V \circ \rho_{\nu} \left| \frac{\varphi' \circ \rho_{\nu}}{\varphi \circ \rho_{\nu}} \right|^{p-2}\frac{\varphi' \circ \rho_{\nu}}{\varphi \circ \rho_{\nu}} |u|^{p} \right]\\
  &+ \int\limits_{S^{N-1}}\int\limits_{\Omega} V(\rho_{\nu}(x)) C_{p}\left( \p_{\nu}u(x), \varphi(\rho_{\nu}(x)) \p_{\nu} \left( \frac{u(x)}{\varphi(\rho_{\nu}(x))} \right) \right) dx d\sigma(\nu).
\end{align*}
and, using $V_{M,p}\geq\tilde{V}_{M,p}$, there holds
\begin{align*}
  \int\limits_{\Omega}& V_{M,p}(x) |\grad u(x)|^{p}dx  \geq \int\limits_{\Omega} W_{M,p}(x) |u(x)|^{p} dx +2\mathcal{S}_{\Omega}\left[ V \circ \rho_{\nu} \left| \frac{\varphi' \circ \rho_{\nu}}{\varphi \circ \rho_{\nu}} \right|^{p-2}\frac{\varphi' \circ \rho_{\nu}}{\varphi \circ \rho_{\nu}} |u|^{p} \right]\\
  &+ \int\limits_{S^{N-1}}\int\limits_{\Omega} V(\rho_{\nu}(x)) C_{p}\left( \p_{\nu}u(x), \varphi(\rho_{\nu}(x)) \p_{\nu} \left( \frac{u(x)}{\varphi(\rho_{\nu}(x))} \right) \right) dx d\sigma(\nu),
\end{align*}
where $\mathcal{S}_{\Omega}[\cdot]$ is a certain mean functional that replaces the distributional Laplacian seen above; see \eqref{eq:spherical-skeletal-mean} for the definition of $\mathcal{S}_{\Omega}[\cdot]$.
This is the content of Theorem \ref{thm:mean-bessel-pair-hardy-identities}.
We remark that Davies' inequality was recently used by Frank and Larson in \cite{frank2020consequences} to provide new and short proofs for results of Lieb \cite{Li} and Rozenbljum \cite{R}.


\medskip



We conclude this discussion with a mention of results pertaining to Hardy-Sobolev-Maz'ya type inequalities, as well as more precise statements of inequalities in the spirit of Brezis and Marcus \cite{BM}.
We also point out connections to our main results concerning Sobolev inequalities on certain noncompact noncomplete Riemannian manifolds.
To begin, we note that there has recently been much interest in improving the Hardy inequality by adding positive remainder terms to the right hand side of \eqref{eq:hardy-inequality-general-domain}.
One of the first results in this direction was obtained by Maz'ya in \cite{M} where it was shown that, for those  smooth functions on the half-space $\R_{+}^{N}=\left\{ x\in\R^{N}:x_{N>0} \right\}$, $N\geq3$, that vanish on the boundary $x_{N}=0$, there holds the sharp Hardy-Sobolev-Maz'ya inequality
\begin{equation}
  \int\limits_{\R_{+}^{N}}|\grad{u}|^{2}dx-\frac{1}{4}\int\limits_{\R_{+}^{N}}\frac{|u|^{2}}{x_{N}^{2}}dx\geq{C}\left( \int\limits_{\R_{+}^{N}}|u|^{\frac{2N}{N-2}}dx \right)^{\frac{N-2}{N}}.
  \label{eq:hsm-inequality}
\end{equation}
Note that \eqref{eq:hsm-inequality} combines both the Sobolev and Hardy inequalities into a single inequality.

The Hardy-Sobolev-Maz'ya inequality was later extended by Filippas, Maz'ya and Tertikas in \cite{FMT,FMT2} to domains $\Omega$ with finite inradius and satisfying $-\Delta d_{\Omega} \geq0$; i.e., under these assumptions, they proved (among more general results)
\begin{equation}
  \int\limits_{\Omega}|\grad{u}|^{2}dx-\frac{1}{4}\int\limits_{\Omega}\frac{|u|^{2}}{d_{\Omega}^{2}}dx\geq{C(\Omega)}\left( \int\limits_{\Omega}|u|^{\frac{2N}{N-2}}dx \right)^{\frac{N-2}{N}},\quad{u\in{C_{0}^{\oo}(\Omega)}}.
  \label{eq:hsm-inequality-on-convex-domains}
\end{equation}
In \cite{FMT2}, they asked if $C(\Omega)$ can be made independent of $\Omega$, and this was shown by Frank and Loss in \cite{FL} to be possible in case $\Omega$ is convex.
We emphasize that $-\Delta d_{\Omega} \geq 0$ is equivalent to convexity in two dimensions and weaker than convexity in higher dimensions (in fact, if $\Omega$ is $C^{2}$, then $-\Delta d_{\Omega} \geq0$ is equivalent to weak mean convexity \cite{LLL,P}).
However, \eqref{eq:hsm-inequality} holds for the domain $\R_{+}^{N}$ with infinite inradius, and so it is natural to ask if \eqref{eq:hsm-inequality-on-convex-domains} holds for more general unbounded domains.
This was answered affirmatively by Gkikas in \cite{G} provided $\Omega$ is an exterior domain satisfying
\begin{equation}
  -\Delta d_{\Omega} (x) + (N-1)\frac{\grad d_{\Omega} \cdot x}{|x|^{2}} \geq 0.
  \label{eq:super-harm-for-unbounded-domains}
\end{equation}
By exterior domain it is meant that $\Omega$ is the complement of relatively compact $C^{2}$ domain and $0 \notin \overline\Omega$.
This result is stated in the following theorem.
\begin{theoremalph}
  \label{thm:gkikas-hsm-inequality}
  Let $N\geq4$ and $\Omega$ be an exterior domain not containing the origin and satisfying condition \eqref{eq:super-harm-for-unbounded-domains}.
  Then the following inequality is valid:
  \begin{equation}
    \int\limits_{\Omega} |\grad u(x)|^{2} dx - \frac{1}{4}\int\limits_{\Omega} \frac{u(x)^{2}}{d_{\Omega}^{2}} \geq C \left( \int\limits_{\Omega} |u(x)|^{\frac{2N}{N-2}} dx \right)^{\frac{N-2}{N}},\quad u \in C_{0}^{\oo}(\Omega),
    \label{eq:gkikas-hsm-inequality}
  \end{equation}
  where $C>0$ depends only on $\Omega$ and $N$.
\end{theoremalph}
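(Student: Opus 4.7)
The plan is to reduce \eqref{eq:gkikas-hsm-inequality} to the bounded-domain Hardy--Sobolev--Maz'ya inequality \eqref{eq:hsm-inequality-on-convex-domains} of Filippas--Maz'ya--Tertikas via a Kelvin-type inversion. The underlying idea is that \eqref{eq:super-harm-for-unbounded-domains} is precisely the natural analog, for exterior domains, of the distributional superharmonicity $-\Delta d \geq 0$ hypothesis of \eqref{eq:hsm-inequality-on-convex-domains}, and the Kelvin transform is the mechanism that makes this correspondence exact.

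The first step is a ground-state substitution: for $u \in C_{0}^{\infty}(\Omega)$, write $u = d_{\Omega}^{1/2} v$ with $v := u\,d_{\Omega}^{-1/2} \in C_{0}^{\infty}(\Omega)$. Applying Theorem~\ref{thm:bessel-pair-hardy-identity-general-domain} to the $2$-Bessel pair $(V,W) = (1,\,1/(4r^{2}))$ with positive solution $\varphi(r) = r^{1/2}$ (equivalently, computing directly from $|\nabla d_{\Omega}| = 1$ a.e.\ and integrating by parts) yields the identity
\begin{equation*}
  \int_{\Omega} |\nabla u|^{2}\,dx - \frac{1}{4}\int_{\Omega} \frac{u^{2}}{d_{\Omega}^{2}}\,dx \;=\; \int_{\Omega} d_{\Omega}\,|\nabla v|^{2}\,dx + \frac{1}{2}\langle -\Delta d_{\Omega},\, v^{2}\rangle.
\end{equation*}
Then I would apply the Kelvin inversion $\Phi(x) = x/|x|^{2}$, which, because $0 \notin \overline{\Omega}$ and $\Omega^{c}$ is compact, maps $\Omega$ diffeomorphically onto a bounded domain $\tilde{\Omega} \subset \R^{N}$. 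Set $\tilde{v}(y) := |y|^{-(N-2)/2}\,v(\Phi^{-1}(y))$, introduce the conformally-scaled ``distance'' $\tilde{d}(y) := |y|^{2}\,d_{\Omega}(\Phi^{-1}(y))$, and put $\tilde{u} := \tilde{d}^{1/2}\tilde{v}$. The technical core of the proof is to verify three transformation rules: (i) conformal invariance of the critical Sobolev norm, $\int_{\Omega} |u|^{2N/(N-2)}\,dx = \int_{\tilde{\Omega}} |\tilde{u}|^{2N/(N-2)}\,dy$; (ii) the weighted Dirichlet integral transforms, up to a divergence term, as $\int_{\Omega} d_{\Omega}\,|\nabla v|^{2}\,dx = \int_{\tilde{\Omega}} \tilde{d}\,|\nabla \tilde{v}|^{2}\,dy$; and (iii) the drift condition \eqref{eq:super-harm-for-unbounded-domains} is algebraically equivalent to $-\Delta_{y}\tilde{d}\geq 0$ distributionally on $\tilde{\Omega}$, with the coefficient $(N-1)$ emerging precisely from the Jacobian factor of $\Phi$. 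Granted these, $\tilde{\Omega}$ falls under \eqref{eq:hsm-inequality-on-convex-domains} (working throughout with $\tilde{d}$, which is comparable to $d_{\tilde{\Omega}}$ near $\p\tilde{\Omega}$), and applying that inequality to $\tilde{u}$ and reversing the substitutions gives \eqref{eq:gkikas-hsm-inequality}.

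The main obstacle is item (iii): the Euclidean distance is not conformally covariant, so one cannot work with $d_{\tilde{\Omega}}$ directly, and the entire argument has to proceed with the conformally-rescaled $\tilde{d}$. Establishing that $\tilde{d} \asymp d_{\tilde{\Omega}}$ in a neighborhood of $\p\tilde{\Omega}$ (so that the quadratic Hardy weight $1/d^{2}$ is preserved up to an $L^{\infty}$ perturbation that is harmless at the Sobolev level) and then tracking how $\Phi$ interacts with $\Delta d_{\Omega}$ to produce \emph{exactly} the drift term $(N-1)\nabla d_{\Omega} \cdot x / |x|^{2}$ requires a careful distributional computation across the singular set of $d_{\Omega}$; the specific value $(N-1)$ of the coefficient is what identifies \eqref{eq:super-harm-for-unbounded-domains} as the sharp hypothesis for this strategy. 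The dimension restriction $N \geq 4$ enters both through the invocation of \eqref{eq:hsm-inequality-on-convex-domains} and through integrability of the Kelvin weights $|y|^{-(N-2)/2}$ near the origin (which corresponds to infinity in $\Omega$).
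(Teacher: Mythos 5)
First, a point of comparison: the paper offers no proof of Theorem \ref{thm:gkikas-hsm-inequality} --- by the paper's stated convention the Latin-letter label marks it as a result quoted from the literature (Gkikas \cite{G}), where it is established by entirely different means (weighted Sobolev inequalities derived from heat-kernel estimates for the degenerate operator $\operatorname{div}(d_{\Omega}\nabla\,\cdot\,)$). So your argument must stand on its own, and it does not: the decisive step (iii) is false. Writing $x=\Phi^{-1}(y)=y/|y|^{2}$, a direct computation (test it on $d=1$, $d=x_{1}$, $d=|x|^{2}$) gives
\[
\Delta_{y}\bigl(|y|^{2}\,d_{\Omega}(y/|y|^{2})\bigr)=|x|^{2}\Delta d_{\Omega}(x)-2N\,x\cdot\nabla d_{\Omega}(x)+2N\,d_{\Omega}(x),
\]
so $-\Delta_{y}\tilde{d}\geq0$ amounts to $-\Delta d_{\Omega}+2N\,\frac{x\cdot\nabla d_{\Omega}}{|x|^{2}}-2N\,\frac{d_{\Omega}}{|x|^{2}}\geq0$, which differs from \eqref{eq:super-harm-for-unbounded-domains} both in the drift coefficient ($2N$ versus $N-1$) and in an extra zeroth-order term of the unfavorable sign. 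No other power $|y|^{\beta}$ repairs this, and $\beta=2$ is forced by the requirement that $u^{2}d_{\Omega}^{-2}\,dx$ pull back to $\tilde{u}^{2}\tilde{d}^{-2}\,dy$. Concretely, for the model case $\Omega=\R^{N}\setminus\overline{B(0,1)}$ (which satisfies \eqref{eq:super-harm-for-unbounded-domains} with equality) one gets $\tilde{\Omega}=B(0,1)\setminus\{0\}$ and $\tilde{d}(y)=|y|(1-|y|)$, which is not superharmonic near the origin; worse, $1/(4\tilde{d}^{2})$ carries an interior singularity $\sim 1/(4|y|^{2})$ at $y=0$ in addition to the boundary singularity, so the transformed problem is a combined interior-plus-boundary Hardy--Sobolev--Maz'ya inequality that \eqref{eq:hsm-inequality-on-convex-domains} simply does not cover.

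The fallback of replacing $\tilde{d}$ by $d_{\tilde{\Omega}}$ via $\tilde{d}\asymp d_{\tilde{\Omega}}$ also fails, for a structural reason: at best $\tilde{d}=d_{\tilde{\Omega}}(1+O(d_{\tilde{\Omega}}))$, so $1/\tilde{d}^{2}-1/d_{\tilde{\Omega}}^{2}=O(1/d_{\tilde{\Omega}})$, which is not an $L^{\infty}$ perturbation. Absorbing a term $\int w^{2}/d_{\tilde{\Omega}}$ by Cauchy--Schwarz costs $\epsilon\int w^{2}/d_{\tilde{\Omega}}^{2}$, i.e.\ it pushes the Hardy constant above the sharp value $1/4$, after which the quadratic form is unbounded below even modulo $L^{2}$-terms; order-$1/d$ perturbations of the critical boundary Hardy potential are exactly the ones that cannot be dismissed. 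There is also an internal inconsistency in your normalizations: with $\tilde{v}=|y|^{-(N-2)/2}\,v\circ\Phi^{-1}$ and $\tilde{u}=\tilde{d}^{1/2}\tilde{v}$ one gets $\tilde{u}=|y|^{-(N-4)/2}\,u\circ\Phi^{-1}$, which is not the Kelvin transform, so your item (i) fails as written; consistency with (i) forces $\tilde{v}=|y|^{-(N-1)}\,v\circ\Phi^{-1}$. In short, the Kelvin inversion does preserve the quadratic form exactly (with the forced weight $\tilde d=|y|^{2}d_{\Omega}\circ\Phi^{-1}$), but it lands outside the hypotheses of the bounded-domain theorem, and the proposal does not close that gap.
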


\medskip

To conclude the introduction, we collect, for the convenience of the reader, the aforementioned applications of our results which were not stated in Theorems \ref{thm:main-theorem1} and \ref{thm:main-theorem4} in the following list:
\begin{enumerate}[(i)]
  \item our identities imply improvements, generalizations and refinements of the sharp Hardy inequalities of Brezis-Marcus, Tidblom, and Lewis-Li-Li \cite{BM,Ti,LLL} for weakly mean convex domains;
  \item our identities imply improvements, generalizations and refinements of the sharp Hardy inequalities of Avkhadiev-Wirths \cite{AW} as well as extend their inequalities to weakly mean convex domains and give a novel geometric interpretation of Lamb's constant $\la_{0}$ and the extremizers obtained by Avkhadiev-Wirths.
\end{enumerate}

\medskip

Our paper is organized as follows: In Section 2, we provide some results about the $p$-Bessel pairs and some geometric preliminaries. Main results, their consequences and their proofs will be given in Section 3: In Subsection \ref{subsec3.1}, we present the proofs for our first main result in this paper, namely, Theorem \ref{thm:one-dimensional-hardy-identities}, and its corollaries. The results from this subsection will be used for results pertaining to the mean distance function. In Subsection \ref{section:hardy-identities}, we state and prove Hardy identities for general domains and give the proofs for the second main result of this article, Theorem \ref{thm:bessel-pair-hardy-identity-general-domain}, and its consequences. Then, Theorems \ref{thm:one-dimensional-hardy-identities} and \ref{thm:bessel-pair-hardy-identity-general-domain} will be used to improve and generalize  Hardy inequalities for weakly mean convex domains (c.f. Theorems \ref{thm:brezis-marcus}, \ref{thm:brezis-marcus-lewis-li-li} and \ref{thm:avk-wirths-theorem}). The third main result, Theorem \ref{thm:mean-bessel-pair-hardy-identities}, will be proved and will be used to generalize Theorem \ref{thm:davies-tidblom-theorem} in Subsection \ref{section:mean-distance-function-results}. Theorem \ref{cor:spectral-lower-bound-brezis-marcus-type} will also be proved in this Subsection. Lastly, in Subsection \ref{section:sobolev-inequality}, Theorem \ref{thm:sobolev-inequality-theorem} combines the Hardy identities in Theorem \ref{thm:bessel-pair-hardy-identity-general-domain} and the Hardy-Sobolev-Maz'ya inequality of Theorem \ref{thm:gkikas-hsm-inequality} to obtain new Sobolev inequalities on noncomplete spaces whose Riemannian metric has low regularity.

\section{Preliminaries}
\label{preliminaries}

\subsection{$p$-Bessel Pairs}

The class of weights considered in this paper consist of pairs $(V,W)$ which satisfy some (generally nonlinear) ODE condition, as well as their spherical mean analogues. This notion of $p-$Bessel pairs was initially given in \cite{DLL}.
To be more precise, given $0<R \leq \oo$ and $1 < p < \oo$, a pair $(V,W)$ of positive $C^{1}$ functions is called a $p$-Bessel pair on an interval $(0,R)$ provided the nonlinear ODE
\begin{equation}
  \left( V|y'|^{p-2}y' \right)'+W|y|^{p-2}y=0
  \label{eq:p-bessel-pair-equation}
\end{equation}
has a positive solution $\varphi$ on $(0,R)$.
A positive solution $\varphi$ of \eqref{eq:p-bessel-pair-equation} for a given pair $(V,W)$ will simply be called a positive solution of the $p$-Bessel pair $(V,W)$.
In the special case $p=2$, the ODE \eqref{eq:p-bessel-pair-equation} is linear and takes the form
\[
  (Vy')'+Wy=0.
\]
this recovers the definition of $2-$Bessel pair in \cite{GM, GM2}.

We also point out that complications may arise when $1<p<2$ and $\varphi'$ has zeros in $(0,R)$.
We therefore adopt the convention that $\varphi'$ has constant sign on $(0,R)$ whenever $1 < p<2$.

Given a $p$-Bessel pair $(V,W)$, we introduce their spherical means as follows:
\begin{equation}
  \begin{aligned}
    \tilde{V}_{M,p}(x)&=\int\limits_{S^{N-1}}V(\rho_{\nu}(x))|\cos\left( \nu,\vec{e} \right)|^{p}d\sigma(\nu)\\
    V_{M,p}(x)&=\int\limits_{S^{N-1}}V(\rho_{\nu}(x))d\sigma(\nu)\\
    W_{M,p}(x)&=\int\limits_{S^{N-1}}W(\rho_{\nu}(x))d\sigma(\nu).
  \end{aligned}
  \label{eq:spherical-means-of-bessel-pairs}
\end{equation}
Here, $(a,b)$ denotes the angle between two vectors $a,b\in\R^{N}$.
Note that, if $V(r) = r^{\a}$, then $V_{M,p}$ is, up to a constant multiple, the mean distance function $d_{\Omega,M,\a}$ (it is clear that $d_{\Omega,M,\a}$ may be defined for $\a\in\R$ so long as $0 < \Gamma(\tfrac{N+x}{2})/\Gamma(\tfrac{x+1}{2}) < \oo$ when evaluated at $x=\a$).
Moreover, since $|\cos(t)|\leq1$, there holds $\tilde{V}_{M,p}\leq{V_{M,p}}$.

Next, define
\begin{align*}
  C_{p}(x,y)&=|x|^{p}-|x-y|^{p}-p|x-y|^{p-2}(x-y)\cdot{y}\\
  &=|x|^{p}+(p-1)|x-y|^{p}-p|x-y|^{p-2}(x-y)\cdot{x},
\end{align*}
where $x,y\in\R^{k}$ for some $k\geq1$.
Observe that $C_{2}(x,y)=|y|^{2}$.
Moreover, this function has the following two important properties.

\begin{lemmaalph}[\cite{DLL}]
  For all $x,y$, there holds $C_{p}(x,y)\geq0$, and $C_{p}(x,y)=0$ if and only if $y=0$.
  \label{lem:cp-properties}
\end{lemmaalph}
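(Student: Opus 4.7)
The plan is to recognize $C_p(x,y)$ as the tangent-plane remainder for the convex function $\phi(z):=|z|^{p}$ based at the point $x-y$ with increment $y$. Since $\nabla\phi(z)=p|z|^{p-2}z$ (with the convention that $|z|^{p-2}z=0$ at $z=0$ when $p>1$, so that $\phi\in C^{1}(\mathbb{R}^{k})$), the very definition of $C_{p}$ yields the identification
\[
C_{p}(x,y) = \phi(x) - \phi(x-y) - \nabla\phi(x-y)\cdot\bigl[x-(x-y)\bigr].
\]
The nonnegativity and equality characterization will then be immediate consequences of convexity, resp.\ strict convexity, of $\phi$.

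First I would establish convexity of $\phi$. Since $t\mapsto|t|^{p}$ is convex and nondecreasing on $[0,\infty)$ for $p\geq 1$ and $z\mapsto|z|$ is convex on $\mathbb{R}^{k}$, the composition $\phi$ is convex on $\mathbb{R}^{k}$. The standard first-order characterization of convexity then gives $\phi(x)\geq\phi(x-y)+\nabla\phi(x-y)\cdot y$, which rewrites exactly as $C_{p}(x,y)\geq 0$.

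Next I would upgrade to strict convexity when $p>1$ by restricting $\phi$ to an arbitrary line $\ell(t)=a+tb$, $b\neq 0$. If $\ell$ avoids the origin, $\phi\circ\ell$ is smooth with second derivative strictly positive on account of $p>1$ (a direct computation using $(|z|^{p})''$ along the line). If $\ell$ passes through $0$ at some $t_{0}$, one sees $\phi(\ell(t))=|t-t_{0}|^{p}|b|^{p}$, which is strictly convex on $\mathbb{R}$ for $p>1$. Hence $\phi$ is strictly convex on $\mathbb{R}^{k}$, and the tangent-plane inequality becomes strict unless the base point coincides with $x$, i.e., unless $y=0$. The converse direction $C_{p}(x,0)=0$ is immediate from the definition.

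The main (mild) obstacle is the regularity of $\nabla\phi$ and the verification of strict convexity at the origin in the range $1<p<2$, where $\phi$ fails to be $C^{2}$ at $0$; both are handled cleanly by the reduction along lines to strict convexity of $t\mapsto|t|^{p}$ on $\mathbb{R}$, which holds for every $p>1$. Nothing else is needed, so the argument is essentially a one-line application of convexity once the tangent-remainder reinterpretation of $C_{p}$ is in place.
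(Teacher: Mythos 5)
Your proof is correct: the identification of $C_{p}(x,y)$ as the first-order (tangent-plane) remainder of the convex function $\phi(z)=|z|^{p}$ at the base point $x-y$ with increment $y$ is exact, convexity gives $C_{p}\geq 0$, and your reduction along lines cleanly yields strict convexity of $\phi$ for $p>1$ (including the $C^{1}$-but-not-$C^{2}$ behaviour at the origin when $1<p<2$), which gives $C_{p}(x,y)>0$ for $y\neq 0$. The paper itself does not prove this lemma but quotes it from \cite{DLL}, and the convexity/tangent-remainder argument you give is essentially the standard proof used there, so there is nothing to add.
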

\noindent
The $C_{p}$ function will be used to explicitly write out the Hardy remainder terms as well as address the question of existence of extremizers.

\medskip
This section is concluded with examples of $p$-Bessel pairs which will be used.	

\begin{example}
  \label{B1}
  If $\la,p\in\R$, then $\left( r^{\lambda-1},\bigg|\frac{\lambda-p}{p}\bigg|^{p}r^{\lambda-1-p} \right)$ is a $p$-Bessel pair on $\left(  0,\infty\right)$ with $\varphi=r^{-\frac{\lambda-p}{p}}$.
\end{example}

\begin{lemma}
  Letting $\varphi(r)=r^{\frac{p-\a+\la}{p}}$, there holds
  \[
    |\varphi'(r)|^{p-2}\varphi'(r)\varphi^{1-p}(r)=\left\vert\frac{p-\a+\la}{p}\right\vert^{p-2}\frac{p-\a+\la}{p}r^{1-p}.
  \]
  \label{lem:varphi-stuff-computed}
\end{lemma}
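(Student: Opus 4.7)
The plan is to carry out a direct computation using the power rule and the multiplicativity of powers of $r$, so this lemma is essentially bookkeeping. Let me outline how I would organize it.

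First I would abbreviate $k := \frac{p-\a+\la}{p}$ so that $\varphi(r) = r^{k}$ on $r>0$, and differentiate to get $\varphi'(r) = k\, r^{k-1}$. Since we are on $(0,\infty)$, $r^{k-1} > 0$, and the sign of $\varphi'$ matches the sign of $k$. Consequently $|\varphi'(r)| = |k|\, r^{k-1}$, and therefore
\[
  |\varphi'(r)|^{p-2}\varphi'(r) = |k|^{p-2}\, r^{(k-1)(p-2)} \cdot k\, r^{k-1} = |k|^{p-2}\, k\, r^{(k-1)(p-1)}.
\]

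Next I would compute $\varphi^{1-p}(r) = r^{k(1-p)} = r^{-k(p-1)}$, and multiply to obtain
\[
  |\varphi'(r)|^{p-2}\varphi'(r)\, \varphi^{1-p}(r) = |k|^{p-2}\, k\, r^{(k-1)(p-1) - k(p-1)} = |k|^{p-2}\, k\, r^{-(p-1)} = |k|^{p-2}\, k\, r^{1-p}.
\]
Substituting back $k = \frac{p-\a+\la}{p}$ yields the claimed identity.

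The only mild subtlety is the $|\varphi'|^{p-2}$ term when $1 < p < 2$: one must note that $\varphi'$ has constant sign on $(0,\infty)$ (consistent with the convention adopted just after the definition of $p$-Bessel pair) so that the expression $|\varphi'|^{p-2}\varphi'$ is well defined and equals $|k|^{p-2}k\, r^{(k-1)(p-1)}$ as computed. Apart from this convention check, no real obstacle arises.
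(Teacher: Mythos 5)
Your computation is correct and is essentially the same direct power-rule calculation as in the paper; the only cosmetic difference is your abbreviation $k=\frac{p-\a+\la}{p}$, whereas the paper works with the explicit exponents $r^{\frac{-\a+\la}{p}(p-2)}r^{\frac{-\a+\la}{p}}r^{\frac{p-\a+\la}{p}(1-p)}$ and simplifies them directly. The remark on the sign of $\varphi'$ for $1<p<2$ is a reasonable (if not strictly necessary here) observation consistent with the paper's convention.
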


\begin{proof}
  Using
  \begin{align*}
    \varphi(r)&=r^{\frac{p-\a+\la}{p}}\\
    \varphi'(r)&=\frac{p-\a+\la}{p}r^{\frac{-\a+\la}{p}},
  \end{align*}
  one finds
  \begin{align*}
    |\varphi'(r)|^{p-2}\varphi'(r)\varphi^{1-p}(r)&=\left\vert\frac{p-\a+\la}{p}\right\vert^{p-2}\frac{p-\a+\la}{p}r^{\frac{-\a+\la}{p}(p-2)}r^{\frac{-\a+\la}{p}}r^{\frac{p-\a+\la}{p}(1-p)}\\
    &=\left\vert\frac{p-\a+\la}{p}\right\vert^{p-2}\frac{p-\a+\la}{p}r^{1-p}.
  \end{align*}
\end{proof}

In preparation of the next example, we recall that the Bessel function of the first kind $J_{0}$ is the solution to the initial value problem
\begin{align*}
  \begin{cases}
    y''+\frac{1}{r}y'+y=0\\
    y(0)=1
  \end{cases}.
\end{align*}
Explicitly, $J_{0}$ has the following series expansion:
\begin{equation}
  J_{0}(r)=\sum_{j=0}^{\oo}\frac{(-1)^{j}}{(j!)^{2}}\left( \frac{r}{2} \right)^{2j}.
  \label{eq:bessel-function}
\end{equation}
Associated with the Bessel function $J_{0}$ is the so-called Lamb's constant $\la_{0}$, i.e., $\la_{0}$ is the first positive zero of
\[
  J_{0}(r)+2rJ_{0}'(r)=0.
\]
Approximately, $\la_{0} = 0.940\ldots$, and this constant is smaller than the first zero $z_{0} = 2.4048\ldots$ of $J_{0}$.
We emphasize that $\la_{0}$ is equivalently defined as the first positive zero of $(r^{\frac{1}{2}}J_{0}(r))'$, and that $(r^{\frac{1}{2}}J_{0}(r))'$ is positive on $(0,\la_{0})$ and negative on $(\la_{0},z_{0})$.

\begin{example}
  \label{B2}
  Let $z_{0}$ be the first positive zero of $J_{0}$ and let $0 < \Lambda \leq z_{0}$.
  For any $R>0$ and $\la \in \R$, $\left( r^{-\la},\left( \frac{\la+1}{2} \right)^{2}r^{-\lambda-2}+\frac{\Lambda^{2}}{R^{2}}r^{-\la}\right)$ is a $2$-Bessel pair on $\left(  0,R\right)  $ with a positive solution $\varphi=r^{\frac{\lambda+1}{2}}J_{0}\left(  \frac{\Lambda r}{R}\right)$.
  Moreover, Lamb's constant $\la_{0}$ is the largest value of $\Lambda$ for which $\varphi$ is increasing on $(0,R)$.
\end{example}

\begin{example}
  \label{B7}
  For any $R>0$ and $1<p<2$, the pair
  \[
    \left(  r, \frac{(p-1)^{2}}{p^{p}}r^{1-p}\left( \log\frac{R}{r} \right)^{-p}+\frac{(2-p)}{p^{p-1}}r^{1-p}\left( \log\frac{R}{r} \right)^{-p+1} \right)
  \]
  is a $p$-Bessel pair on $\left(0,R\right)$ with $\varphi=\left( \log\frac{R}{r} \right)^{\frac{1}{p}}$.
  Observe that at $p=2$, this pair reduces to the known $2$-Bessel pair used in \cite{LLZ20}.
\end{example}

\begin{proof}
  Consider only $r\in(0,R)$.
  Set $\varphi=\psi^{\frac{1}{p}}$, where
  $  \psi=\log\frac{R}{r}$.

  Set $V=r$, and set $W=cr^{1-p}\varphi^{-p^{2}}+Cr^{1-p}\varphi^{-p^{2}+p}$, where $c,C>0$ are to be determined.

  Compute
  \begin{align*}
    \varphi'&=\frac{1}{p}\psi'\psi^{\frac{1}{p}-1}=
    \frac{1}{p}\psi'\varphi^{1-p}\\
    \varphi''&=\frac{1}{p}\psi''\varphi^{1-p}-\frac{(p-1)}{p}\psi'\varphi'\varphi^{-p}\\
    &=\frac{1}{p}\psi''\varphi^{1-p}-\frac{(p-1)}{p^{2}}(\psi')^{2}\varphi^{1-2p}\\
    |\varphi'|^{p-2}\varphi'&=|\frac{1}{p}\psi'|^{p-2}\varphi^{(p-2)(1-p)}\frac{1}{p}\psi'\varphi^{1-p}\\
    &=|\frac{1}{p}\psi'|^{p-2}\frac{1}{p}\psi'\varphi^{-p^{2}+2p-1}\\
    |\varphi'|^{p-2}\varphi''&=|\frac{1}{p}\psi'|^{p-2}\varphi^{(p-2)(1-p)}\left[  \frac{1}{p}\psi''\varphi^{1-p}-\frac{(p-1)}{p^{2}}(\psi')^{2}\varphi^{1-2p} \right]\\
    &=\frac{1}{p^{p-1}}|\psi'|^{p-2}\psi''\varphi^{-p^{2}+2p-1}-(p-1)|\frac{1}{p}\psi'|^{p}\varphi^{-p^{2}+p-1}.
  \end{align*}
  Consequently,
  \begin{align*}
   & V'|\varphi'|^{p-2}\varphi'+(p-1)V|\varphi'|^{p-2}\varphi''+W\varphi^{p-1}\\= &V'|\frac{1}{p}\psi'|^{p-2}\frac{1}{p}\psi'\varphi^{-p^{2}+2p-1}
    +(p-1)V\frac{1}{p^{p-1}}|\psi'|^{p-2}\psi''\varphi^{-p^{2}+2p-1}\\
    &-(p-1)^{2}V|\frac{1}{p}\psi'|^{p}\varphi^{-p^{2}+p-1}
    +W\varphi^{p-1}.
  \end{align*}
  A quick consequence of this computation is that $V$, $W$, and $\varphi$ must satisfy
  \begin{align*}
    V'|\psi'|^{p-2}\psi'\psi+(p-1)V|\psi'|^{p-2}\psi''\psi-(p-1)^{2}pV|\psi'|^{p}+p^{p-1}W\psi^{p}=0
  \end{align*}
  if the solution $\varphi$ is to be of the form $\psi^{\frac{1}{p}}$.

  Therefore,
  \begin{align*}
   & V'|\varphi'|^{p-2}\varphi'+(p-1)V|\varphi'|^{p-2}\varphi''+W\varphi^{p-1}\\=&|\frac{1}{p}\psi'|^{p-2}\frac{1}{p}\psi'\varphi^{-p^{2}+2p-1}
    +(p-1)r\frac{1}{p^{p-1}}|\psi'|^{p-2}\psi''\varphi^{-p^{2}+2p-1}\\
    &-(p-1)^{2}r|\frac{1}{p}\psi'|^{p}\varphi^{-p^{2}+p-1}
    +cr^{1-p}\varphi^{-p^{2}+p-1}
    +Cr^{1-p}\varphi^{-p^{2}+2p-1}
  \end{align*}
  Observing that $\psi'=-\frac{1}{r}$ and $\psi''=\frac{1}{r^{2}}$, one finds this is equal to
  \begin{align*}
    &=-\frac{1}{p^{p-1}}r^{1-p}\varphi^{-p^{2}+2p-1}
    +(p-1)\frac{1}{p^{p-1}}r^{1-p}\varphi^{-p^{2}+2p-1}\\
    &-(p-1)^{2}\frac{1}{p^{p}}r^{1-p}\varphi^{-p^{2}+p-1}
    +cr^{1-p}\varphi^{-p^{2}+p-1}
    +Cr^{1-p}\varphi^{-p^{2}+2p-1}.
  \end{align*}
  Thus, one only needs to choose
  $$
    c=\frac{(p-1)^{2}}{p^{p}};\,\,
    C=\frac{(2-p)}{p^{p-1}}$$
  and observe that $2-p>0$ for $1<p<2$ to conclude that $(V,W)$ is a Bessel pair.

\end{proof}

\subsection{Geometric Preliminaries}

In this section we state several geometric results pertaining to the distance function to the boundary of a domain.
Most of the results here are recorded in the literature, with the possible exceptions Proposition \ref{prop:harmonic-implies-flat} and Corollary \ref{cor:bounded-implies-not-bounded}; we are unaware if these latter two results are recorded in the literature.

Fix a domain $\Omega\subsetneq\R^{N}$ with $C^{2}$ boundary and let $d_{\Omega}(x)=\dist(x,\p\Omega)$.
Let $k_{1}(y)$, $\ldots$, $k_{N-1}(y)$ be the principal curvatures of $\p\Omega$ at the point $y\in\p\Omega$.
The sign convention is chosen such that convex domains have non-positive principal curvatures.
Next, let $H(y)$ denote the mean curvature of $\p\Omega$ at $y\in\p\Omega$, normalized so that
\[
  H(y)=\frac{1}{N-1}\sum_{j=1}^{N-1}k_{j}(y).
\]
For the reader's convenience, we recall the following notions of convexity:
\begin{center}
  \begin{tabular}{ccc}
    convex & $\iff$ & $k_{j} \leq 0$\\
    strongly convex & $\iff$ & $k_{j} < 0$\\
    weakly mean convex & $\iff$ & $H \leq 0$\\
    mean convex & $\iff$ & $H < 0$
  \end{tabular},
\end{center}
where the inequalities hold pointwisely on $\p\Omega$ and for $j=1,\ldots,N-1$.

\medskip

Now, given $x \in \Omega$, let $N(x) = \left\{ y \in \p\Omega : d_{\Omega}(x) = |x-y| \right\}$ denote the near set of $x$ whose elements are called near points, noting that $N(x)$ may contain more than one near point.
In case $N(x) = \left\{ y \right\}$, we write $N(x)$ for $y$.
The skeleton of $\Omega$ is the set $S(\Omega)=\left\{ x\in\Omega:\#N(x)>1 \right\}$, i.e., the set of points with more than one near point.
The skeleton of $\Omega$ is precisely where $d_{\Omega}$ fails to be differentiable in $\Omega$.
The closure (relative to $\Omega$) of the skeleton is called the cut locus of $\Omega$ and is denoted by $\Sigma(\Omega)$, and its complement $G(\Omega)=\Omega \setminus \Sigma(\Omega)$ will be called the ``good'' set, i.e., the largest open subset of $\Omega$ such that every point in $G(\Omega)$ has a unique near point (this set was defined by Li and Nirenberg in \cite{LN}).
Two results relating $\Delta{d_{\Omega}}$ to the curvatures $k_{j},j=1,\ldots,N$, and $H$ are recalled (c.f. \cite[Sections 2.4 \& 2.5]{BET}).

\begin{lemmaalph}
  \label{lem:laplace-curvature-identity}
  Let $\Omega \subset\R^{N}$, $N\geq2$, have $C^{2}$ boundary.
  Then $d_{\Omega} \in C^{2}(G(\Omega))$ and for $x \in G(\Omega)$ and $y=N(x)$ there holds
  \begin{align*}
    \Delta{d_{\Omega}(x)}&=\sum_{j=1}^{N-1}\frac{k_{j}(y)}{1+d_{\Omega}(x)k_{j}(y)}
  \end{align*}
  and
  \begin{align*}
    1&+d_{\Omega}(x)k_{j}(y)>0.
  \end{align*}
\end{lemmaalph}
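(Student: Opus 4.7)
The plan is to work in the tubular (normal) coordinates around $\partial\Omega$ and read off both the regularity and the Laplacian directly from the Jacobian of the normal parametrization.

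First, I would set up the normal map $\Phi\colon \partial\Omega \times [0,\infty) \to \overline\Omega$ by $\Phi(y,t) = y + t\nu(y)$, where $\nu(y)$ is the inward unit normal. Since $\partial\Omega$ is $C^{2}$, $\nu$ is $C^{1}$ and $\Phi$ is $C^{1}$. At a fixed $y\in\partial\Omega$, pick an orthonormal frame $e_{1},\dots,e_{N-1}$ of principal directions at $y$, so that the Weingarten map satisfies $D_{e_{j}}\nu = -\kappa_{j} e_{j}$ where, with the sign convention of the paper (convex $\Leftrightarrow k_{j}\le 0$), we identify $\kappa_{j}=-k_{j}(y)$. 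Then
\[
d\Phi_{(y,t)}(e_{j},0) = (1+tk_{j}(y))\,e_{j}, \qquad d\Phi_{(y,t)}(0,1) = \nu(y),
\]
so the Jacobian determinant of $\Phi$ at $(y,t)$ equals $J(y,t) := \prod_{j=1}^{N-1}(1+tk_{j}(y))$.

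Next, I would characterize $G(\Omega)$ in these coordinates: a point $x=\Phi(y,t)$ lies in $G(\Omega)$ precisely when $y$ is the unique nearest boundary point to $x$, which forces the normal geodesic from $y$ not to have met the cut locus yet. A standard tubular neighborhood argument (inverse function theorem applied to $\Phi$, combined with uniqueness of the near point on $G(\Omega)$) gives that $\Phi$ is a $C^{1}$ diffeomorphism from an open subset of $\partial\Omega \times (0,\infty)$ onto $G(\Omega)$, that $J(y,t)>0$ throughout this set, and that $d_{\Omega}(\Phi(y,t))=t$. In particular the factor $1+d_{\Omega}(x)k_{j}(y)$ is the $j$th factor of $J$, which is strictly positive, yielding the second assertion. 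Since $\Phi$ and $\Phi^{-1}$ are $C^{1}$ and $\partial\Omega$ is $C^{2}$ (so $\nu\in C^{1}$ and the $k_{j}$ are continuous), bootstrapping by differentiating the identity $d_{\Omega}\circ \Phi = t$ gives $d_{\Omega}\in C^{2}(G(\Omega))$.

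To obtain the Laplacian formula I would compute the Euclidean metric in the coordinates $(y_{1},\dots,y_{N-1},t)$ induced by $\Phi$. Because the $e_{j}$ are principal and orthonormal, the metric is diagonal with
\[
g_{ij}(y,t) = (1+tk_{i}(y))^{2}\delta_{ij},\qquad g_{tt}=1,\qquad g_{ti}=0,
\]
so $\sqrt{\det g}=J(y,t)$ and $g^{tt}=1$. Using the intrinsic formula for the Laplacian,
\[
\Delta d_{\Omega} = \Delta t = \frac{1}{\sqrt{\det g}}\,\partial_{t}\bigl(\sqrt{\det g}\,g^{tt}\bigr) = \partial_{t}\log J(y,t) = \sum_{j=1}^{N-1}\frac{k_{j}(y)}{1+t\,k_{j}(y)},
\]
evaluated at $t=d_{\Omega}(x)$ and $y=N(x)$, which is the desired identity.

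The only nontrivial step is the tubular-neighborhood/inverse-function argument identifying the image of $\Phi$ with $G(\Omega)$ and establishing the non-degeneracy $1+tk_{j}>0$ globally there; everything else is a direct computation in diagonal coordinates. Once that is in place, the $C^{2}$-regularity and the curvature formula fall out simultaneously from the explicit expression for $J$.
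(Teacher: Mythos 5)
Your overall route -- Fermi/normal coordinates for the map $\Phi(y,t)=y+t\nu(y)$, Jacobian $J(y,t)=\prod_j(1+tk_j(y))$, and $\Delta d_{\Omega}=\partial_t\log J$ along the normal line -- is the standard one; the paper itself gives no proof of this lemma (it is recalled from \cite[Sections 2.4--2.5]{BET}), and what you outline is essentially the argument found there and in Gilbarg--Trudinger. The Laplacian computation is fine as stated, with the small caveat that a coordinate system on $\partial\Omega$ whose coordinate fields are principal directions at every point need not exist; this is harmless because you only differentiate in $t$, and along the fixed normal line through $y$ one has $d\Phi_{(y,t)}(e_j)=(1+tk_j(y))e_j$ for the principal frame at $y$, which is all the identity $\Delta t=\partial_t\log\sqrt{\det g}$ requires.

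The genuine gap is at the nondegeneracy $1+d_{\Omega}(x)k_j(y)>0$, which is not a by-product of the tubular-neighborhood setup but is itself one of the two assertions of the lemma, and your justification for it is circular: the inverse function theorem needs a nonvanishing Jacobian as an input, it cannot produce it, and injectivity of $\Phi$ (which does follow from uniqueness of near points) does not imply $\det d\Phi\neq 0$ for a $C^1$ map. Moreover, even granting $J(y,t)>0$, positivity of the product does not give positivity of each factor without a further argument (an even number of factors could be negative); one must propagate positivity from $t=0$ along the normal segment, using that this segment lies in $G(\Omega)$. The missing geometric ingredients are: (i) the interior tangent ball comparison -- since $B(x,d_{\Omega}(x))\subset\Omega$ touches $\partial\Omega$ at $y=N(x)$, the principal curvatures satisfy $1+d_{\Omega}(x)k_j(y)\geq 0$; and (ii) a strictness argument, e.g.\ that for $x\in G(\Omega)$ one has $d_{\Omega}(x)<c(y)$ (the endpoint $y+c(y)\nu(y)$ of the maximal minimizing normal segment lies in the cut locus $\Sigma(\Omega)$), so for $k_j(y)<0$ monotonicity of $t\mapsto 1+tk_j(y)$ together with $1+c(y)k_j(y)\geq 0$ gives strict positivity, while for $k_j(y)\geq 0$ it is trivial; equivalently one may argue that a focal point along a minimizing normal belongs to $\Sigma(\Omega)$, contradicting $x\in G(\Omega)$. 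With (i) and (ii) in place, $\det d\Phi>0$ on the relevant set, the inverse function theorem then legitimately yields the local $C^1$ diffeomorphism, and your bootstrap $\nabla d_{\Omega}(x)=\nu(N(x))$ with $N(\cdot)\in C^1$ gives $d_{\Omega}\in C^2(G(\Omega))$ and the stated formula. As written, however, the proposal assumes precisely the inequality it is asked to prove.
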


\begin{lemmaalph}
  \label{lem:laplace-curvature-inequality}
  Let $\Omega \subset\R^{N}$, $N\geq2$, have $C^{2}$ boundary.
  Then for $x \in G(\Omega)$ and $y=N(x)$ there holds
  \begin{align*}
    \Delta{d_{\Omega}(x)}&\leq\frac{(N-1)H(y)}{1+d_{\Omega}(x)H(y)}\leq(N-1)H(y)\\
    \esssup_{\Omega}\Delta{d_{\Omega}}&=\esssup_{\p\Omega}(N-1)H.
  \end{align*}
\end{lemmaalph}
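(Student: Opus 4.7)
The plan is to build both inequalities directly from the identity
$\Delta d_{\Omega}(x)=\sum_{j=1}^{N-1} k_{j}(y)/(1+d_{\Omega}(x) k_{j}(y))$
and the positivity $1+d_{\Omega}(x) k_{j}(y)>0$ furnished by the preceding lemma, by viewing the right-hand side as an average of values of a single concave function. Fix $x\in G(\Omega)$, set $d=d_{\Omega}(x)\geq 0$, $y=N(x)$, and consider
\[
  f(t)=\frac{t}{1+dt},\qquad f''(t)=-\frac{2d}{(1+dt)^{3}},
\]
which is concave on the half-line $\{t:1+dt>0\}$. Since each $k_{j}(y)$ lies in this half-line, averaging the positivity constraints shows that $H(y)=\frac{1}{N-1}\sum k_{j}(y)$ lies there too, so $1+dH(y)>0$. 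Jensen's inequality for $f$ then yields
\[
  \Delta d_{\Omega}(x)=\sum_{j=1}^{N-1} f(k_{j}(y))\leq (N-1)\,f\!\left(\frac{1}{N-1}\sum_{j}k_{j}(y)\right)=\frac{(N-1)H(y)}{1+d_{\Omega}(x) H(y)},
\]
which is the first inequality.

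The second inequality $(N-1)H(y)/(1+dH(y))\leq (N-1)H(y)$ is sign-free: their difference is
\[
  (N-1)H(y)-\frac{(N-1)H(y)}{1+dH(y)}=\frac{(N-1)\,d\,H(y)^{2}}{1+dH(y)}\geq 0,
\]
using only $d\geq 0$ and the already-established $1+dH(y)>0$. For the essential supremum identity, the $\leq$ direction is immediate from the pointwise bound on $G(\Omega)$ combined with the fact that, for a $C^{2}$ boundary, $\Sigma(\Omega)$ has Lebesgue measure zero, so $\esssup_{\Omega}\Delta d_{\Omega}=\esssup_{G(\Omega)}\Delta d_{\Omega}\leq \esssup_{\p\Omega}(N-1)H$. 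For the reverse direction, I would invoke the $C^{2}$ tubular-neighborhood theorem: for every $y_{0}\in\p\Omega$ there is an inward-normal segment $\{y_{0}+t\nu(y_{0}):0<t<\varepsilon\}$ lying entirely inside $G(\Omega)$ on which $d_{\Omega}(x)=t\to 0^{+}$ and $N(x)\equiv y_{0}$, so that the identity $\Delta d_{\Omega}(x)=\sum k_{j}(y_{0})/(1+t\,k_{j}(y_{0}))\to (N-1)H(y_{0})$ as $t\to 0^{+}$. Taking $y_{0}$ through a sequence approximating $\esssup_{\p\Omega}(N-1)H$ forces $\esssup_{\Omega}\Delta d_{\Omega}\geq \esssup_{\p\Omega}(N-1)H$.

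The substantive content is really just the Jensen step; the main technical obstacle is the essential supremum statement, where one has to be honest about measure-theoretic issues on the cut locus and about continuity of $\Delta d_{\Omega}$ up to $\p\Omega$. This continuity along normals follows from the smoothness of $k_{j}$ on $\p\Omega$ together with the fact (recorded in the cited reference) that a one-sided neighborhood of $\p\Omega$ sits inside the good set $G(\Omega)$ when $\p\Omega$ is $C^{2}$, so no delicate limiting argument on the skeleton is required.
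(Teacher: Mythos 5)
The paper itself offers no proof of this lemma: it carries a Latin letter, which by the paper's convention means it is recalled from the literature (here \cite[Sections 2.4--2.5]{BET}), so there is no in-paper argument to compare yours against. On its own terms, your treatment of the two pointwise inequalities is correct and is essentially the standard derivation: starting from the identity and the positivity $1+d_{\Omega}(x)k_{j}(y)>0$ of Lemma \ref{lem:laplace-curvature-identity}, the concavity of $t\mapsto t/(1+dt)$ on $\{t:1+dt>0\}$ together with Jensen gives the first bound (and also $1+d_{\Omega}(x)H(y)>0$), while your difference computation, valid for either sign of $H(y)$, gives the second.

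The essential-supremum part is where the genuine gap lies. For the $\leq$ direction you assert that $\Sigma(\Omega)$ is Lebesgue-null for a $C^{2}$ boundary; that is precisely the point that cannot be waved through. The facts recalled in this paper give measure zero only for bounded $C^{2,1}$ domains (via \cite{IT,LN}), and \cite{MM} exhibits a convex $C^{1,1}$ domain whose cut locus has positive measure, so plain $C^{2}$ is the borderline case and your claim is unsupported. What is null for an arbitrary domain is the skeleton $S(\Omega)$ (the non-differentiability set of $d_{\Omega}$), not its closure $\Sigma(\Omega)$, and the formula of Lemma \ref{lem:laplace-curvature-identity} is only available on $G(\Omega)=\Omega\setminus\Sigma(\Omega)$. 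To close this you must either justify the bound $\Delta d_{\Omega}(x)\leq(N-1)H(N(x))$ at a.e.\ point of $\Sigma(\Omega)$ where $d_{\Omega}$ is twice differentiable (for instance by a one-sided comparison with the distance function of an interior tangent ball, which works wherever the Hessian exists), or read $\esssup_{\Omega}\Delta d_{\Omega}$ as taken over $G(\Omega)$, or strengthen the hypothesis to $C^{2,1}$ as this paper itself does whenever it needs $|\Sigma(\Omega)|=0$. A smaller issue concerns the $\geq$ direction: convergence of $\Delta d_{\Omega}$ along a single inward normal takes place on a null set and by itself says nothing about an essential supremum. You need to add that $\Delta d_{\Omega}$ is continuous on an open one-sided collar (true, since such a collar lies in $G(\Omega)$ for a compact $C^{2}$ boundary piece and $d_{\Omega}\in C^{2}(G(\Omega))$) and that $H$ is continuous, so that $\esssup_{\p\Omega}(N-1)H=\sup_{\p\Omega}(N-1)H$ and values arbitrarily close to it are attained by $\Delta d_{\Omega}$ on open sets of positive measure; with those observations made explicit, the $\geq$ direction is fine, and the $\leq$ direction remains the real gap.
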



What is more is that outside of the skeleton the gradient of $d_{\Omega}$ takes a particularly nice form.
This is the content of the following theorem which was proved by Motzkin in \cite{motzkin1935quelques}.

\begin{theoremalph}
  \label{thm:gradient-of-distance-function}
  Let $\Omega \subset\R^{N}$, $N\geq2$, have $C^{2}$ boundary.
  If $x\in\Omega$ and $\#N(x)=1$, then $d_{\Omega}$ is differentiable at $x$ and
  \[
    \grad{d_{\Omega}}(x)=\frac{x-N(x)}{|x-N(x)|}.
  \]
  Moreover, $\grad{d_{\Omega}}$ is continuous where it is defined.
\end{theoremalph}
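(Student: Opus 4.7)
The plan is to derive matching upper and lower bounds on $d_{\Omega}(x+h) - d_{\Omega}(x)$ that are linear in $h$ with slope $\nu := (x-N(x))/|x-N(x)|$ modulo $o(|h|)$, which will give differentiability and the stated formula. The $C^{2}$ regularity of $\partial\Omega$ is not critical for the first assertion (it only guarantees that near points exist); I would actually prove differentiability assuming only that $\partial\Omega$ is a nonempty closed set and that $\#N(x)=1$, and then invoke continuity of $N$ to get both the formula and the continuity statement.

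\textbf{Upper bound.} Writing $y = N(x)$, for any $h\in\R^{N}$ small enough that $x+h\in\Omega$, by definition of $d_{\Omega}$,
\[
d_{\Omega}(x+h) \leq |x+h-y| = |x-y|\sqrt{1 + 2\nu\cdot h/|x-y| + |h|^{2}/|x-y|^{2}} = d_{\Omega}(x) + \nu\cdot h + O(|h|^{2}).
\]
Hence $d_{\Omega}(x+h)-d_{\Omega}(x) - \nu\cdot h \leq O(|h|^{2})$.

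\textbf{Continuity of the near-point map at $x$.} Pick any $y_{h} \in N(x+h)$ (nonempty since $\partial\Omega$ is closed and $d_{\Omega}(x+h)$ is finite) and suppose $h_{n}\to 0$. The sequence $\{y_{h_{n}}\}$ is bounded, hence has a subsequence converging to some $y^{*}\in\partial\Omega$. Since $d_{\Omega}$ is $1$-Lipschitz,
\[
|x-y^{*}| = \lim_{n} |x+h_{n}-y_{h_{n}}| = \lim_{n} d_{\Omega}(x+h_{n}) = d_{\Omega}(x),
\]
so $y^{*}\in N(x) = \{y\}$. Uniqueness forces $y_{h}\to y$ as $h\to 0$.

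\textbf{Lower bound.} With $y_{h}\in N(x+h)$ and $\nu_{h} := (x-y_{h})/|x-y_{h}|$, the same algebraic manipulation as the upper bound gives
\[
d_{\Omega}(x+h) - d_{\Omega}(x) \geq |x+h-y_{h}| - |x-y_{h}| = \nu_{h}\cdot h + O(|h|^{2}/|x-y_{h}|).
\]
Since $y_{h}\to y$ we have $\nu_{h}\to \nu$, so $(\nu_{h}-\nu)\cdot h = o(|h|)$, and the remainder is $O(|h|^{2})$ because $|x-y_{h}|\to|x-y|>0$. Combining with the upper bound gives
\[
d_{\Omega}(x+h) - d_{\Omega}(x) = \nu\cdot h + o(|h|),
\]
proving differentiability at $x$ with $\grad d_{\Omega}(x)=\nu$.

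\textbf{Continuity of $\grad d_{\Omega}$.} On the set $U=\{x\in\Omega : \#N(x)=1\}$, the same argument in the "continuity of near-point map" step shows that $x\mapsto N(x)$ is continuous on $U$: if $x_{n}\to x$ in $U$ and $y_{n}=N(x_{n})$, then any subsequential limit of $\{y_{n}\}$ must equal $N(x)$. Therefore $\grad d_{\Omega}(x_{n}) = (x_{n}-N(x_{n}))/|x_{n}-N(x_{n})|$ converges to $(x-N(x))/|x-N(x)| = \grad d_{\Omega}(x)$. The only subtlety — and the main thing to be careful about in writing up — is handling the dependence of the error terms on $|x-y_{h}|$, which is harmless because this distance is uniformly bounded below near a point where $d_{\Omega}(x)>0$; this is why the whole argument is local and requires $x\in\Omega$ rather than $x\in\overline{\Omega}$.
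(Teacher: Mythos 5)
Your argument is correct. Note, however, that the paper does not prove this statement at all: it is labeled with a Latin letter precisely because it is quoted as a known result, attributed to Motzkin \cite{motzkin1935quelques}, so there is no in-paper proof to compare with. Your elementary squeeze argument (upper bound from the triangle inequality with the fixed near point, lower bound from a near point of the perturbed point, plus compactness to get continuity of the near-point map at a point where it is single-valued) is the standard proof and is sound; as you observe, it needs no boundary regularity beyond $\p\Omega$ being nonempty and closed, and the error terms are harmless because $|x-y_{h}|\to d_{\Omega}(x)>0$. In fact the lower bound can be streamlined: $|x+h-y_{h}|\geq (x+h-y_{h})\cdot\nu_{h}=|x-y_{h}|+\nu_{h}\cdot h$, avoiding any Taylor expansion there. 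One small point you leave implicit in the final step: the theorem asserts continuity of $\grad d_{\Omega}$ \emph{where it is defined}, and you prove continuity on $U=\{x\in\Omega:\#N(x)=1\}$; to identify these sets you should add the (one-line) converse that differentiability at $x\in\Omega$ forces $\#N(x)=1$ — for each $y\in N(x)$ the directional derivative of $d_{\Omega}$ at $x$ in the direction $(y-x)/|y-x|$ equals $-1$, and $|\grad d_{\Omega}(x)|\leq 1$ then pins down $\grad d_{\Omega}(x)=(x-y)/|x-y|$, so two distinct near points at equal distance are impossible. With that remark included, your proof fully establishes the statement.
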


It is worth mentioning that the size of cut locus depends on the regularity of the boundary.
For example, in \cite[page 10]{MM}, they provide an example of a convex open $\Omega\subset\R^{2}$ with $C^{1,1}$ boundary and such that $\Sigma(\Omega)$ has non-zero Lebesgue measure.
On the other hand, if $\Omega$ is bounded and has a $C^{2,1}$ boundary, then $\Sigma$ is arcwise connected and has finite $(N-1)$-dimensional Hausdorff measure (and hence is a Lebesgue null set); see \cite{IT,LN}.

Next, we show that, if $\Omega$ is $C^{2}$ and $\Delta d_{\Omega} = 0$ near $\p\Omega$, then $k_{j}(N(x))=0$, $j=1,\ldots,N-1$, for $x$ close to $\p\Omega$.
Combining the following proposition with Motzkin's theorem (Theorem \ref{thm:motzkins-theorem}), we show that, if $\Delta d_{\Omega} = 0$ in the sense of distributions in $\Omega$ and $\Omega$ has $C^{2}$ boundary, then $\Omega$ is unbounded and has flat boundary.

\begin{proposition}
  \label{prop:harmonic-implies-flat}
  Let $\Omega \subset\R^{N}$, $N\geq2$, have $C^{2}$ boundary and let $y \in \p\Omega$.
  If $\Delta d_{\Omega} = 0$ in a small neighborhood of $y$, then $k_{j}(y) = 0$ for $j=1,\ldots,N-1$.
\end{proposition}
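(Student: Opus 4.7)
The plan is to reduce the statement to a one-variable identity along the inward normal and then extract information from its Taylor expansion.

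Since $\p\Omega$ is $C^{2}$, a standard tubular neighborhood argument guarantees that, for some $\varepsilon>0$, every point of the form $x_{t}=y+t\nu(y)$ with $0<t<\varepsilon$ (here $\nu(y)$ is the inward unit normal at $y$) lies in the good set $G(\Omega)$, has a unique nearest point $N(x_{t})=y$, and satisfies $d_{\Omega}(x_{t})=t$. By shrinking $\varepsilon$ if needed, I may assume $\{x_{t}:0<t<\varepsilon\}$ lies inside the neighborhood where $\Delta d_{\Omega}=0$ by hypothesis.

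Applying Lemma \ref{lem:laplace-curvature-identity} at the points $x_{t}$, I obtain
\begin{equation*}
  f(t):=\sum_{j=1}^{N-1}\frac{k_{j}(y)}{1+t\,k_{j}(y)}=0\qquad\text{for all }t\in(0,\varepsilon),
\end{equation*}
together with $1+t\,k_{j}(y)>0$ for all such $t$ and all $j$. Thus $f$ is a rational function of $t$ that vanishes on an open interval, hence vanishes identically in $t$ on its domain.

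The key step is to exploit this identity via power series. For $|t|$ small enough that $t|k_{j}(y)|<1$ for every $j$, expand each summand geometrically to get
\begin{equation*}
  f(t)=\sum_{j=1}^{N-1}k_{j}(y)\sum_{n=0}^{\infty}(-t\,k_{j}(y))^{n}=\sum_{n=0}^{\infty}(-1)^{n}t^{n}\,p_{n+1},\qquad p_{m}:=\sum_{j=1}^{N-1}k_{j}(y)^{m}.
\end{equation*}
Since $f\equiv0$, every coefficient vanishes; in particular $p_{2}=\sum_{j}k_{j}(y)^{2}=0$, which forces $k_{j}(y)=0$ for each $j=1,\dots,N-1$. (Equivalently, one can just read off $f'(0)=-\sum_{j}k_{j}(y)^{2}=0$.)

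The only nontrivial point is the preliminary geometric claim that the inward normal segment at $y$ lies in $G(\Omega)$ for small parameter, so that Lemma \ref{lem:laplace-curvature-identity} is applicable and the distributional condition $\Delta d_{\Omega}=0$ near $y$ can be evaluated pointwise; this is standard for $C^{2}$ boundaries and is precisely where the $C^{2}$ hypothesis is used.
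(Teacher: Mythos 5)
Your proof is correct and follows essentially the same route as the paper: both restrict to the inward normal segment at $y$ (which lies in $G(\Omega)$ by the $C^{2}$ hypothesis), apply Lemma \ref{lem:laplace-curvature-identity} to obtain $\sum_{j}k_{j}(y)/(1+t\,k_{j}(y))=0$ for $t$ in an interval, and then extract the vanishing of the curvatures from this identity. The only divergence is the final algebraic step --- where the paper clears denominators and argues inductively from the leading coefficient $k_{1}\cdots k_{N-1}$, you read off the coefficient of $t$ in the expansion, $-\sum_{j}k_{j}(y)^{2}=0$, which gives all $k_{j}(y)=0$ at once and is, if anything, a cleaner finish.
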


\begin{proof}
  Since $\Omega$ has $C^{2}$ boundary, we have that, for small $\e>0$, there holds $B(y,\e) \cap \Omega \subset G(\Omega)$.
  Let $x$ be in this intersection and such that $N(x) = y$.
  By Lemma \ref{lem:laplace-curvature-identity}, we have
  \begin{align*}
    \Delta{d_{\Omega}(x')}&=\sum_{j=1}^{N-1}\frac{k_{j}(y)}{1+d_{\Omega}(x')k_{j}(y)}
  \end{align*}
  for all $x'$ belonging to the line connecting $x$ to $y$ (since $N(x') = y$ for such $x'$), and $1+d_{\Omega}(x')k_{j}(y)>0$.
  For simplicity, let $k_{j}$ denote $k_{j}(y)$.
  It is then easy to see that $\Delta d_{\Omega}(x')=0$ is equivalent to
  \[
    k_{1} \cdots k_{N-1} d_{\Omega}(x')^{N-1} + LOT = 0,
  \]
  where $LOT$ consists of lower powers of $d_{\Omega}(x')$ with coefficients in terms of the $k_{j}$.
  As the $k_{j}$ are constant along the line joining $x$ to $y$, it follows that at least one of the $k_{j}$ is zero; i.e., up to relabeling, we may write
  \begin{align*}
    \Delta{d_{\Omega}(x')}&=\sum_{j=1}^{N-2}\frac{k_{j}}{1+d_{\Omega}(x')k_{j}}.
  \end{align*}
  Proceeding inductively, we conclude that $k_{j}=0$ for $j=1,\ldots,N-1$.
\end{proof}

Before stating the corollary, we need to recall a part of Motzkin's Theorem (attributed to the independent works of Bunt \cite{bunt1934bijdrage} and Motzkin \cite{motzkin1935quelques}).

\begin{theoremalph}
  \label{thm:motzkins-theorem}
  Let $ U \subset\R^{N}$ be a domain and let $\operatorname{Ext} U = \R^{N} \setminus \overline U$ be its exterior.
  Then $\overline U$ is convex if and only if $d_{\operatorname{Ext} U}$ is differentiable at every point in $\operatorname{Ext} U$.
\end{theoremalph}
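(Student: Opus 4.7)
The plan is to handle the two implications separately; the forward implication is elementary, while the backward implication is the substantive content, being a variant of the classical Bunt-Motzkin characterization of convex sets in $\R^{N}$.

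For the forward direction, fix $x \in \operatorname{Ext} U$ and assume $\overline U$ is convex. Because $y \mapsto |x-y|^{2}$ is strictly convex and $\overline U$ is closed and convex, this quadratic attains its minimum on $\overline U$ at a unique point $p \in \partial U$, so $\#N(x) = 1$. A standard non-smooth calculation (essentially the conclusion of Theorem~\ref{thm:gradient-of-distance-function} without the $C^{2}$ assumption on $\partial \Omega$) then shows that uniqueness of the nearest point implies differentiability of $d_{\operatorname{Ext} U}$ at $x$ with gradient $(x - p)/|x - p|$: the upper bound on the directional derivative comes from the admissibility $d_{\operatorname{Ext} U}(x + h) \leq |x + h - p|$, while the matching lower bound follows because every convergent subsequence of near points of $x + h$ must tend to $p$ by uniqueness.

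For the backward direction, the first step is to promote differentiability of $d_{\operatorname{Ext} U}$ at $x$ into $\#N(x) = 1$: if there were two distinct near points $y_1, y_2$, the unit vectors $v_i = (y_i - x)/|y_i - x|$ would each satisfy $d_{\operatorname{Ext} U}(x + t v_i) \leq d_{\operatorname{Ext} U}(x) - t$ for small $t > 0$, and combined with the $1$-Lipschitz bound $|\nabla d_{\operatorname{Ext} U}(x)| \leq 1$ this forces $\nabla d_{\operatorname{Ext} U}(x) = -v_1 = -v_2$, hence $y_1 = y_2$. Thus $\overline U$ is a Chebyshev set in $\R^{N}$. To convert the Chebyshev property into convexity, my preferred route uses convex analysis: for any closed $K \subset \R^{N}$, the function
\[
    f(x) := \tfrac{1}{2}\bigl(|x|^{2} - d(x, K)^{2}\bigr) = \sup_{y \in K}\Bigl(x \cdot y - \tfrac{1}{2}|y|^{2}\Bigr)
\]
is proper and convex, being a supremum of affine functions. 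Under the differentiability hypothesis, $d_{\operatorname{Ext} U}^{2}$ extends to a $C^{1}$ function on all of $\R^{N}$ (its gradient $2 d_{\operatorname{Ext} U} \nabla d_{\operatorname{Ext} U}$ vanishes continuously across $\partial U$ because $d_{\operatorname{Ext} U} \to 0$ and $|\nabla d_{\operatorname{Ext} U}| = 1$), so with $K = \overline U$ we obtain $f \in C^{1}(\R^{N})$ and $\nabla f = p_{\overline U}$, the nearest-point projection. Since $p_{\overline U}$ maps into $\overline U$ and fixes $\overline U$ pointwise, its range is exactly $\overline U$. Rockafellar's theorem on the range of the subdifferential of a proper convex function then gives $\operatorname{ri}(\operatorname{dom} f^{*}) \subseteq \operatorname{ran}(\nabla f) \subseteq \operatorname{dom}(f^{*})$, and these three sets share a common convex closure; because $\overline U = \operatorname{ran}(\nabla f)$ is already closed, it coincides with this closure and is therefore convex.

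The main obstacle is the last step, where one must extract convexity from the structure of $\nabla f = p_{\overline U}$. An alternative geometric route is Motzkin's original induction on dimension, which avoids convex duality but is substantially longer and relies on a subtle planar argument driven by continuity of the near-point projection; I would nonetheless favour the convex-analytic proof above both for its brevity and because the identification $\nabla f = p_{\overline U}$ meshes well with the distance-function analysis used elsewhere in the paper.
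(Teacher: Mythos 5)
The paper offers no proof of this statement to compare against: Theorem \ref{thm:motzkins-theorem} is recalled as classical background, attributed to Bunt \cite{bunt1934bijdrage} and Motzkin \cite{motzkin1935quelques}, and simply cited. So the only question is whether your self-contained argument is correct, and it is. The forward direction (strict convexity of $y\mapsto|x-y|^{2}$ on the closed convex set $\overline U$ gives a unique near point, which in turn gives differentiability with gradient $(x-p)/|x-p|$ by the admissibility upper bound and the subsequential-convergence lower bound) is sound, and so is your reduction of differentiability to the Chebyshev property via $d(x+tv_{i})\leq d(x)-t$ together with $|\nabla d|\leq 1$. The substantive step is your Asplund-style convex-analytic argument: $f=\tfrac12\left(|x|^{2}-\dist(x,\overline U)^{2}\right)$ is convex as a supremum of affine functions, everywhere differentiable with $\nabla f=p_{\overline U}$ (the metric projection), and Rockafellar's sandwich $\operatorname{ri}(\operatorname{dom}f^{*})\subseteq\operatorname{ran}(\partial f)\subseteq\operatorname{dom}f^{*}$ forces the closed set $\overline U=\operatorname{ran}(\nabla f)$ to coincide with $\overline{\operatorname{dom}f^{*}}$, hence to be convex; this checks out. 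Two points you gloss but which are easily repaired: (i) you silently identify $d_{\operatorname{Ext}U}$, the distance to $\partial(\operatorname{Ext}U)$, with $\dist(\cdot,\overline U)$ on $\operatorname{Ext}U$ — true, because the nearest point of $\overline U$ to $x$ is a limit of points of the open segment from $x$, which lies in $\operatorname{Ext}U$, so it belongs to $\partial(\operatorname{Ext}U)$; (ii) differentiability of $\dist(\cdot,\overline U)^{2}$ at points of $\partial U$ follows most directly from the quadratic bound $0\leq\dist(x_{0}+h,\overline U)^{2}\leq|h|^{2}$ (giving zero gradient), which is what the range argument actually needs — differentiability everywhere so that $\partial f(x)=\{p_{\overline U}(x)\}$ — rather than the gradient-continuity remark you give; the $C^{1}$ statement is then automatic for finite convex functions. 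Your route is genuinely different from Motzkin's original induction on dimension, but since the paper treats the result as quoted literature, either the citation or your argument serves the purpose; yours has the merit of meshing with the near-point and distance-function machinery the paper already sets up.
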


\begin{corollary}
  \label{cor:bounded-implies-not-bounded}
  Suppose $\Omega$ has $C^{2}$ boundary.
  If $\Delta d_{\Omega} = 0$ in the sense of distributions in $\Omega$, then $\Omega$ is unbounded and its boundary is flat.
\end{corollary}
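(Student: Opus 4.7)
The plan is to reduce the distributional hypothesis to a classical one in a tubular neighborhood, apply Proposition \ref{prop:harmonic-implies-flat} pointwise, and then exploit the rigidity of totally umbilic-flat hypersurfaces.

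First, since $\partial\Omega$ is $C^{2}$, there is a one-sided tubular neighborhood $U \subset \Omega$ of $\partial\Omega$ on which $d_{\Omega}$ is of class $C^{2}$; in particular $U \subset G(\Omega)$. On $U$ the distributional Laplacian coincides with the classical one, so the hypothesis $\Delta d_{\Omega}=0$ in $\mathcal{D}'(\Omega)$ forces $\Delta d_{\Omega} \equiv 0$ pointwise on $U$. For each $y \in \partial\Omega$ this gives $\Delta d_{\Omega}=0$ in a small $\Omega$-neighborhood of $y$, and Proposition \ref{prop:harmonic-implies-flat} yields $k_{1}(y)=\cdots=k_{N-1}(y)=0$. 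Thus every principal curvature of $\partial\Omega$ vanishes identically.

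Next I would argue each connected component $M$ of $\partial\Omega$ is a full affine hyperplane. Since the second fundamental form of $M$ vanishes everywhere, $M$ is a totally geodesic $C^{2}$ hypersurface in $\R^{N}$; by a standard ODE/integration-of-the-Gauss-map argument, $M$ is contained in a unique affine hyperplane $H$. Moreover $M$ is relatively open in $H$ (near any $p \in M$, the hypersurface $M$ agrees with $H$ in a neighborhood of $p$, because $M$ is a $C^{2}$-graph over $T_{p}M = H - p$ with vanishing Hessian) and relatively closed in $H$ (being a closed subset of $\R^{N}$). Since $H \cong \R^{N-1}$ is connected and $M$ is nonempty, $M = H$.

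It follows immediately that $\partial\Omega$ is unbounded (hence $\Omega$ is unbounded) and that every component of $\partial\Omega$ is a hyperplane, which is the claimed flatness. The main technical point is the passage from ``all principal curvatures vanish at every point'' to ``each component is an entire hyperplane''; the delicate part is the relative openness of $M$ in $H$, which is where the vanishing Hessian of the defining graph is used. Everything else is a direct application of the preceding proposition together with the regularity of the tubular neighborhood. Note that Theorem \ref{thm:motzkins-theorem} is not needed for this direction, though it harmonizes with the conclusion since half-spaces and slabs are precisely the convex domains whose boundary distance is harmonic.
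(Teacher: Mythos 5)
Your argument is correct, but it follows a genuinely different route from the paper. The paper first invokes hypoellipticity of $\Delta$ to upgrade distributional harmonicity to smoothness of $d_{\Omega}$ in all of $\Omega$, then applies Motzkin's characterization (Theorem \ref{thm:motzkins-theorem}) to conclude that the complement of $\Omega$ has convex closure — which forces $\Omega$ to be unbounded — and only then uses Proposition \ref{prop:harmonic-implies-flat} for flatness. You instead work purely locally near the boundary: restricting the distributional Laplacian to the good set where $d_{\Omega}$ is $C^{2}$, you get classical harmonicity near each boundary point, kill all principal curvatures via Proposition \ref{prop:harmonic-implies-flat}, and then run a rigidity (open--closed) argument to show each connected component of $\p\Omega$ is an entire affine hyperplane, from which both flatness and unboundedness follow at once. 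What each buys: your proof avoids elliptic regularity and Motzkin entirely and yields the sharper conclusion that every boundary component is a full hyperplane (and, since distinct components are disjoint hence parallel, this essentially recovers the "strip or halfspace" conclusion the paper later extracts in the proof of Corollary \ref{cor:improvements-corollary}); the paper's route is shorter given the tools already on the table and additionally records convexity of the complement. One small caution: for a noncompact $C^{2}$ boundary a one-sided tubular neighborhood of \emph{uniform} thickness need not exist, so your opening sentence should be read as the variable-thickness, purely local statement that each $y\in\p\Omega$ has a small $\Omega$-neighborhood contained in $G(\Omega)$ — which is exactly the fact the paper itself uses in proving Proposition \ref{prop:harmonic-implies-flat}, and it is all your argument actually requires.
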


\begin{proof}
  If $\Delta d_{\Omega} = 0$, then, by the hypoellipticity of $\Delta$, we may conclude that $d_{\Omega}$ is smooth in $\Omega$.
  By Theorem \ref{thm:motzkins-theorem}, we conclude that the exterior of $\Omega$ is convex, which is only possible if $\Omega$ is unbounded.
  Using Proposition \ref{prop:harmonic-implies-flat}, we may conclude that $\p\Omega$ is everywhere flat.
\end{proof}

\medskip

We end this section by recalling the distributional Laplacian of $d_{\Omega}$ and compute it explicitly using geodesic normal coordinates.
The following is likely known and may be found in \cite{S} in the case of manifolds with boundaries.

To begin, let $\Omega$ be $C^{2,1}$ with finite inradius and, for each $x\in\Omega$, let $\g_{x}(t)$ denote the straight line emanating from $x$ into $\Omega$ in the inward normal direction.
Next, define the number $c(x)>0$ as the number satisfying: $\g_{x}(t)$ minimizes the distance to $\p\Omega$ if and only if $t\in[0,c(x)]$.
Observe that, since $\Omega$ has finite inradius, there holds $c(x) \in (0,\oo)$ for all $x \in \p \Omega$.
The normal coordinates for a point $x \in G(\Omega)$ is then given by the tuple $(r,\sigma)$, where $0<r<c(\sigma)$ and $\sigma \in \p\Omega$.
Lastly, let $\theta(r,\sigma)$ denote the density of the Lebesgue measure written in normal coordinates: $dx = \theta(r,\sigma) dr d\mcH^{N-1}(\sigma)$.
In particular, if $f$ is integrable on $\Omega$, then
\[
  \int\limits_{\Omega} f dx = \int\limits_{\p\Omega} \int\limits_{0}^{c(\sigma)}f(r,\sigma) \theta(r,\sigma) dr d\mcH^{N-1}(\sigma).
\]
Moreover, $r \mapsto \theta(r,\sigma)$ is extended continuously to $[0,c(\sigma)]$ by setting
\[
  \theta(c(\sigma),\sigma) = \lim_{r \to c(\sigma)^{-}} \theta(r,\sigma).
\]

Now, since $d_{\Omega}$ is only Lipschitz continuous, it need not be twice weakly differentiable on $\Omega$, and therefore one needs to define $\Delta d_{\Omega}$ in the sense of distributions:
\[
  \gen{\Delta d_{\Omega}, \phi} = \int\limits_{\Omega} d_{\Omega}(x) \Delta \phi(x) dx = - \int\limits_{\Omega} \grad d_{\Omega}(x) \cdot \grad \phi(x) dx
\]
for $\phi \in C_{0}^{\oo}(\Omega)$, where the second equality follows from $d_{\Omega}$ being Lipschitz continuous.
If $G=G(\Omega)$ and $\Sigma = \Sigma(\Omega)$ denote the good set and cut locus, respectively, then $\Delta d_{\Omega}$ splits into two distributions:
\begin{equation}
  \Delta d_{\Omega} = \Delta_{G} d_{\Omega} + \Delta_{\Sigma} d_{\Omega},
  \label{eq:distributional-laplacian-decomposition}
\end{equation}
where $\Delta_{G} d_{\Omega}$ is the distribution defined by the function $\Delta (d_{\Omega} |_{G})$, and  $\Delta_{\Sigma} d_{\Omega}$ is defined by the pairing
\begin{equation}
  \gen{\Delta_{\Sigma} d_{\Omega}, \phi} = -\int\limits_{\p\Omega} \phi\left( c(x),x \right) \theta(c(x),x) d\mcH^{N-1}(x),
  \label{eq:distributional-laplacian-cut-locus}
\end{equation}
for $\phi \in C_{0}^{\oo}(\Omega)$ (note that the nonnegative Laplacian is used in \cite{S}, and whence the difference in sign).
In particular, since $\Omega$ is $C^{2,1}$, the $G$ has measure zero, and so $\Delta_{G}d_{\Omega}$ is defined almost everywhere in $\Omega$.
We thus use $\Delta d_{\Omega}$ to denote both the distributional Laplacian of $d_{\Omega}$, and the distribution $\Delta_{G} d_{\Omega}$, i.e., we will often write
\[
  \gen{\Delta_{G} d_{\Omega}, \phi} = \int\limits_{\Omega} \phi(x) \Delta d_{\Omega}(x) dx = \int\limits_{\Omega} d_{\Omega}(x) \Delta \phi(x) dx.
\]
Therefore, in case $\Omega$ is $C^{2,1}$ and if $F$ is regular enough function, there holds
\begin{equation}
  \int\limits_{\Omega} \grad F(x) \cdot \grad d_{\Omega}(x) dx = - \int\limits_{\Omega} F(x) \Delta d_{\Omega}(x) dx - \gen{ \Delta_{\Sigma} d_{\Omega}, F}.
  \label{eq:divergence-theorem-for-good-laplacian}
\end{equation}

\section{Main Results and Proofs}
\label{main-results}

\subsection{One-Dimensional Hardy Identities}
\label{subsec3.1}
In this section, we prove one-dimensional Hardy identities with respect to $p$-Bessel pairs.
We recall the classical problem of determining necessary and sufficient conditions on a pair of weights $(V,W)$ for which there exists a positive finite constant $C$ such that there holds
\begin{equation}
  \int\limits_{0}^{\oo}\left| W(x) \int_{0}^{x}f(t) dt \right|^{p} dx \leq C  \int\limits_{0}^{\oo}|V(x)f(x)|^{p}dx,
  \label{eq:muckenhoupt-one-dim}
\end{equation}
where $1 \leq p \leq \oo$ and the differential Hardy inequality follows by taking $f(t) = F'(t)$ with $F(0) = 0$.
A classical necessary and sufficient condition for \eqref{eq:muckenhoupt-one-dim} to hold is for
\begin{equation}
  \sup_{r>0}\left[ \int\limits_{r}^{\oo} |W(x)|^{p} dx \right]^{\frac{1}{p}}\left[ \int\limits_{0}^{r}|V(x)|^{-p'}dx \right]^{\frac{1}{p'}}
  \label{eq:muckenhoupt-one-dim-condition}
\end{equation}
to be finite, where $\frac{1}{p} + \frac{1}{p'} = 1$; see for example \cite{Mu,T,To} and the references therein.
It is not hard to follow Muckenhoupt's proof in \cite{Mu} to show that an analogous statement holds on bounded intervals $(a,b)$ where $a$ and $b$ replace, respectively, the bounds $0$ and $\oo$ in \eqref{eq:muckenhoupt-one-dim} and \eqref{eq:muckenhoupt-one-dim-condition}.
The main results in this section make use of $p$-Bessel pairs to prove Hardy identities corresponding to the differential version of \eqref{eq:muckenhoupt-one-dim} on bounded intervals.

The proof of the main result in this subsection, Theorem \ref{thm:one-dimensional-hardy-identities}, will make it clear (without needing to consider boundary geometry) why we should expect distributional terms in Hardy identities for domains.
Moreover, Corollary \ref{cor:one-dimensional-hardy-identity-corollary-2} makes it clear why we should expect Lamb's constant to appear in the improved Hardy inequalities in \cite{AW}.
Lastly, the results from this section will be used for results pertaining to the mean distance function.

\medskip

\begin{theorem}
  \label{thm:one-dimensional-hardy-identities}
  Let $a<b$ be two real numbers.
  Let $(V,W)$ be a $p$-bessel pair on the interval $(0,R')$ with $R'>R = \frac{b-a}{2}$, $1 < p<\oo$ and positive solution $\varphi$.
  Lastly, let $d(t)=\min\left\{ t-a,b-t \right\}$.
  Then, for $u\in{C_{0}^{\oo}(a,b)}$, there holds
  \begin{align*}
    \int\limits_{a}^{b}V(d(t))|u'(t)|^{p}dt-\int\limits_{a}^{b}W(d(t))|u(t)|^{p}dt&=\int\limits_{a}^{b}V(d(t))C_{p}\left(u'(t),\varphi(d(t))\left( \frac{u(t)}{\varphi(d(t))} \right)'\right)\\
    &+2\frac{V(R)|\varphi'(R)|^{p-2}\varphi'(R)}{\varphi(R)^{p-1}}\left|u\left(\frac{a+b}{2}\right)\right|^{p}.
  \end{align*}
  In particular, if $R$ is a critical point of $\varphi$, then
  \begin{align*}
    \int\limits_{a}^{b}V(d(t))|u'(t)|^{p}dt-\int\limits_{a}^{b}W(d(t))|u(t)|^{p}dt&=\int\limits_{a}^{b}V(d(t))C_{p}\left(u'(t),\varphi(d(t))\left( \frac{u(t)}{\varphi(d(t))} \right)'\right).
  \end{align*}
\end{theorem}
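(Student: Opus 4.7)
The plan is to first re-express $V(d(t))|u'(t)|^{p}$ via the $C_{p}$ identity by comparing $u'$ with the derivative of $u/\varphi(d)$. Writing $s(t) := d'(t)$, which equals $+1$ on $(a,c)$ and $-1$ on $(c,b)$ with $c := (a+b)/2$, and setting $w(t) := u(t)/\varphi(d(t))$, a direct chain-rule computation gives $\varphi(d(t))\, w'(t) = u'(t) - s(t)\, u(t)\, \varphi'(d(t))/\varphi(d(t))$. Substituting $x = u'$ and $y = \varphi(d)\, w'$ into the definition of $C_{p}$, using $|s|=1$, and simplifying produces the pointwise identity
\[
V(d)|u'|^{p} = V(d)\,C_{p}\!\left(u',\, \varphi(d)\, w'\right) - (p-1)\,V(d)\,|u|^{p}\left|\frac{\varphi'(d)}{\varphi(d)}\right|^{p} + A(d)\, s\, \bigl(|u|^{p}\bigr)',
\]
where $A(r) := V(r)\,|\varphi'(r)|^{p-2}\varphi'(r)\,\varphi(r)^{1-p}$ and I use $p|u|^{p-2}u\,u' = (|u|^{p})'$.

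The next step is to rewrite the last term using the $p$-Bessel ODE. A direct expansion of $A'(r)$ combined with $\bigl(V|\varphi'|^{p-2}\varphi'\bigr)' = -W\varphi^{p-1}$ gives the key reduction $A'(r) = -W(r) - (p-1)\,V(r)\,|\varphi'(r)/\varphi(r)|^{p}$. I would then split $\int_{a}^{b} A(d(t))\, s(t)\, (|u|^{p})'(t)\, dt$ at $c$ and integrate by parts on each half. On $(a,c)$, where $s \equiv 1$, this yields $A(R)|u(c)|^{p} - \int_{a}^{c} A'(d)|u|^{p}\, dt$; on $(c,b)$, where $s \equiv -1$ and $(A \circ d)'(t) = -A'(d(t))$, the same procedure yields $A(R)|u(c)|^{p} - \int_{c}^{b} A'(d)|u|^{p}\, dt$. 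Boundary contributions at the outer endpoints $a,b$ vanish because $u \in C_{0}^{\infty}(a,b)$. Summing and substituting the formula for $A'$, the $-(p-1)V(d)|u|^{p}|\varphi'/\varphi|^{p}$ terms cancel while $W(d)|u|^{p}$ is recovered, leaving exactly $2A(R)|u(c)|^{p}$ as the surplus, which matches the claimed boundary remainder with $A(R) = V(R)|\varphi'(R)|^{p-2}\varphi'(R)\,\varphi(R)^{1-p}$.

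The main subtlety is the bookkeeping at the midpoint $c$, where $d$ is not differentiable and $s$ flips sign. A naive single integration by parts over $(a,b)$ would miss the one-sided boundary contributions of $A(d(t))$ at $c$; since both one-sided limits of $A(d(t))$ equal $A(R)$ and the sign flip of $s$ makes these contributions add rather than cancel, one obtains the factor of $2$ in the boundary remainder. When $\varphi'(R) = 0$ this remainder is identically zero, recovering the clean critical-point form stated in the theorem.
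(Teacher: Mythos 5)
Your proposal is correct and follows essentially the same route as the paper's proof: both rest on the $p$-Bessel ODE, the $C_p$ expansion, and a single integration by parts split at the midpoint, where the sign flip of $d'$ makes the two one-sided contributions add and produce the factor $2$ in the boundary remainder $2V(R)|\varphi'(R)|^{p-2}\varphi'(R)\varphi(R)^{1-p}|u(\tfrac{a+b}{2})|^{p}$. The only difference is cosmetic: you move the derivative from $|u|^{p}$ onto $A(d)$, whereas the paper moves it from $F(d)=V|\varphi'|^{p-2}\varphi'$ composed with $d$ onto $|u|^{p}/\varphi(d)^{p-1}$, which is the same computation read in reverse.
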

\begin{remark}
  One may take the $p$-Bessel pair to be defined only on $(0,R)$ provided the limits in the following proof are well-defined.
  Moreover, an analogous result holds on an infinite interval where $R'=\oo$ and the constant term vanishes since the function is compactly supported.
\end{remark}

\begin{proof}
  For sake of notational convenience, we prove this theorem on the interval $(0,2)$ and let $d(t)=\min\left\{ t,2-t \right\}$ denote the distance to the boundary $\left\{ 0,2 \right\}$.
  Let $d'$ denote the almost everywhere defined derivative of $d$.
  Note that $R=1$.

  Let
  \[
    F(r)=V(r)|\varphi'(r)|^{p-2}\varphi'(r),
  \]
  and observe that, since $(V,W)$ is a $p$-Bessel pair, there holds
  \[
    -W(d(t))|\varphi(d(t))|^{p-2}\varphi(d(t))=F'(d(t))=F'(d(t))(d(t)')^{2}=(F(d(t)))'d'(t),
  \]
  on $(0,1)\cup(1,2)$.
  Note that $d$ is only twice differentiable on $(0,1)\cup(1,2)$, and so, in the following, we will need to restrict integration to the intervals $(0,1-\e)$ and $(1+\e,2)$, and then take limits.
  Consequently, we find that, for $\e>0$ small, there holds
  \begin{align*}
    \int\limits_{0}^{1-\e}W(d(t))|u(t)|^{p}dt+\int\limits_{1+\e}^{2}W(d(t))|u(t)|^{p}dt&=-\int\limits_{0}^{1-\e}\frac{|u(t)|^{p}}{\varphi(d(t))^{p-1}}\left( F(d(t)) \right)'dt\\
    &+\int\limits_{1+\e}^{2}\frac{|u(t)|^{p}}{\varphi(d(t))^{p-1}}\left( F(d(t)) \right)'dt.
  \end{align*}
  Next, we compute
  \begin{align*}
    -\int\limits_{0}^{1-\e}\frac{|u(t)|^{p}}{\varphi(d(t))^{p-1}}\left( F(d(t)) \right)'dt&=\int\limits_{0}^{1-\e}F(d(t))\left( \frac{|u(t)|^{p}}{\varphi(d(t))^{p-1}} \right)'dt-\frac{|u(1-\e)|^{p}}{\varphi(1-\e)^{p-1}}F(1-\e)
  \end{align*}
  and
  \begin{align*}
    \int\limits_{1+\e}^{2}\frac{|u(t)|^{p}}{\varphi(d(t))^{p-1}}\left( F(d(t)) \right)'dt&=-\int\limits_{1+\e}^{2}F(d(t))\left( \frac{|u(t)|^{p}}{\varphi(d(t))^{p-1}} \right)'dt-\frac{|u(1+\e)|^{p}}{\varphi(1-\e)^{p-1}}F(1-\e),
  \end{align*}
  where we have used that $d(1+\e)=1-\e$.
  Taking $\e\to0$, we obtain
  \begin{align*}
    \int\limits_{0}^{2}W(d(t))|u(t)|^{p}dt&=\int\limits_{0}^{2}F(d(t))\left( \frac{|u(t)|^{p}}{\varphi(d(t))^{p-1}} \right)'d'(t)dt-2\frac{|u(1)|^{p}}{\varphi(1)^{p-1}}F(1).
  \end{align*}
  Next, computing
  \begin{align*}
    \left( \frac{|u(t)|^{p}}{\varphi(d(t))^{p-1}} \right)'&=\frac{p|u(t)|^{p-2}u(t)u'(t)}{\varphi(d(t))^{p-1}}-(p-1)\varphi'(d(t))d'(t)\frac{|u(t)|^{p}}{\varphi(d(t))^{p}},
  \end{align*}
  and observing
  \begin{align*}
    C_{p}\left(u'(t),\varphi(d(t))\left( \frac{u(t)}{\varphi(d(t))} \right)'\right)&=|u'(t)|^{p}+(p-1)\bigg\vert\frac{\varphi'(d(t))}{\varphi(d(t))}\bigg\vert^{p}|u(t)|^{p}\\
    &-p\bigg\vert\frac{\varphi'(d(t))}{\varphi(d(t))}\bigg\vert^{p-2}\frac{\varphi'(d(t))}{\varphi(d(t))}|u(t)|^{p-2}u(t)u'(t)d'(t),
  \end{align*}
  we find
  \begin{align*}
    F(d(t))\left( \frac{|u(t)|^{p}}{\varphi(d(t))^{p-1}} \right)'d'&=V(d(t))p\bigg\vert\frac{\varphi'(d(t))}{\varphi(d(t))}\bigg\vert^{p-2}\frac{\varphi'(d(t))}{\varphi(d(t))}|u(t)|^{p-2}u(t)u'(t)d'(t)\\
    &-V(d(t))(p-1)\bigg\vert\frac{\varphi'(d(t))}{\varphi(d(t))}\bigg\vert^{p}|u(t)|^{p}\\
    &=V(d(t))|u'(t)|^{p}-V(d(t))C_{p}\left(u'(t),\varphi(d(t))\left( \frac{u(t)}{\varphi(d(t))} \right)'\right).
  \end{align*}
  It follows that, at last, there holds
  \begin{align*}
    \int\limits_{0}^{2}V(d(t))|u'(t)|^{p}dt-\int\limits_{0}^{2}W(d(t))|u(t)|^{p}dt&=\int\limits_{0}^{2}V(d(t))C_{p}\left(u'(t),\varphi(d(t))\left( \frac{u(t)}{\varphi(d(t))} \right)'\right)\\
    &+2\frac{V(1)|\varphi'(1)|^{p-2}\varphi'(1)}{\varphi(1)^{p-1}}|u(1)|^{p}.
  \end{align*}
\end{proof}

We use Theorem \ref{thm:one-dimensional-hardy-identities} to obtain the following two corollaries.
For sake of clarity, we do not state the weighted versions of these corollaries (i.e., of the form given in Corollaries \ref{cor:weighted-hardy-identity-for-domains} and \ref{cor:avk-wirths-improvmenet}).

\begin{corollary}
  Assuming the hypotheses of Theorem \ref{thm:one-dimensional-hardy-identities}, there holds for $u \in C_{0}^{\oo}(a,b)$:
  \begin{align*}
    \int\limits_{a}^{b}|u'(t)|^{p}dt-\left( \frac{p-1}{p} \right)^{p}\int\limits_{a}^{b}\frac{|u(t)|^{p}}{d(t)^{p}}dt &=2\frac{p-1}{p}\left|u\left( \frac{b-a}{2} \right)\right|^{p}\\
    &+\int\limits_{a}^{b}C_{p}\left(u'(t),d(t)^{\frac{p-1}{p}}\left( \frac{u(t)}{d(t)^{\frac{p-1}{p}}} \right)'\right).
  \end{align*}
  \label{cor:one-dimensional-hardy-identity-corollary-1}
\end{corollary}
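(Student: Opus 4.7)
The plan is to apply Theorem \ref{thm:one-dimensional-hardy-identities} directly with the explicit $p$-Bessel pair provided by Example \ref{B1} with $\lambda = 1$, namely
\[
    (V,W) = \bigl(1,\, ((p-1)/p)^{p}\, r^{-p}\bigr),
\]
whose positive solution is $\varphi(r) = r^{(p-1)/p}$. Since this pair is defined on all of $(0,\infty)$, the hypothesis $R' > R = (b-a)/2$ of the theorem is automatically satisfied for any interval $(a,b)$.

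With this choice the left-hand side of the identity in Theorem \ref{thm:one-dimensional-hardy-identities} collapses to exactly
\[
    \int_{a}^{b}|u'(t)|^{p}\,dt - \Bigl(\tfrac{p-1}{p}\Bigr)^{p}\int_{a}^{b}\frac{|u(t)|^{p}}{d(t)^{p}}\,dt,
\]
since $V(d(t)) = 1$ and $W(d(t)) = ((p-1)/p)^{p} d(t)^{-p}$. For the bulk remainder, observing that $\varphi(d(t)) = d(t)^{(p-1)/p}$ immediately yields the $C_{p}$ integrand written in the corollary. The work therefore reduces to evaluating the boundary/midpoint constant
\[
    2\,\frac{V(R)|\varphi'(R)|^{p-2}\varphi'(R)}{\varphi(R)^{p-1}}
\]
at $R = (b-a)/2$, which I would do using Lemma \ref{lem:varphi-stuff-computed} (with $\alpha = 0$, $\lambda = 1$), giving $|\varphi'(r)|^{p-2}\varphi'(r)\,\varphi(r)^{1-p} = ((p-1)/p)^{p-1}\, r^{1-p}$, so the midpoint coefficient becomes $2((p-1)/p)^{p-1} R^{1-p}$ times $|u((a+b)/2)|^{p}$.

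The main thing to be careful about is an algebraic check: confirming that the exponents match up to give the form of the boundary constant stated in the corollary (this is exactly the observation above, and is where the precise power of $(p-1)/p$ and the factor $R^{1-p}$ come from). No new analytic ingredient beyond Theorem \ref{thm:one-dimensional-hardy-identities} and Lemma \ref{lem:varphi-stuff-computed} is needed; the entire content is the verification that Example \ref{B1} with $\lambda = 1$ produces the classical unweighted Hardy pair, and substitution then finishes the proof.
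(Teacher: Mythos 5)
Your proposal is correct and is essentially the paper's own proof: the paper likewise proves this corollary in one line by applying Theorem \ref{thm:one-dimensional-hardy-identities} to the $p$-Bessel pair $(V,W)=\bigl(1,(\tfrac{p-1}{p})^{p}r^{-p}\bigr)$ with positive solution $\varphi=r^{\frac{p-1}{p}}$ on $(0,\infty)$, i.e.\ $R'=\infty$. Two small remarks: your explicit evaluation of the midpoint term gives $2\bigl(\tfrac{p-1}{p}\bigr)^{p-1}\bigl(\tfrac{b-a}{2}\bigr)^{1-p}\bigl|u\bigl(\tfrac{a+b}{2}\bigr)\bigr|^{p}$, which is indeed what the theorem produces, whereas the constant $2\tfrac{p-1}{p}$ printed in the corollary agrees with this only after the normalization $R=1$, $p=2$ used in the theorem's proof — so your computation is the reliable one and the printed constant should be read accordingly; also, your citation of Lemma \ref{lem:varphi-stuff-computed} with $\alpha=0$, $\lambda=1$ is a parameter slip (that choice gives exponent $\tfrac{p+1}{p}$; one needs $\lambda-\alpha=-1$), though the formula you quote, $\bigl(\tfrac{p-1}{p}\bigr)^{p-1}r^{1-p}$, is the correct one for $\varphi=r^{\frac{p-1}{p}}$.
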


\begin{proof}
  This identity follows from applying Theorem \ref{thm:one-dimensional-hardy-identities} with the $p$-Bessel pair
  \[
    (V,W)=\left( 1, \left( \frac{p-1}{p} \right)^{p}r^{-p} \right)
  \]
  which has as a positive solution the function $\varphi=r^{\frac{p-1}{p}}$ on the infinite interval $(0,\oo)$, i.e., with $R'=\oo$.
\end{proof}

\begin{corollary}
  Assuming the hypotheses of Theorem \ref{thm:one-dimensional-hardy-identities}, there holds for $u \in C_{0}^{\oo}(a,b)$:
  \[
    \int\limits_{a}^{b}|u'(t)|^{2}dt - \frac{1}{4} \int\limits_{a}^{b} \frac{|u(t)|^{2}}{d(t)^{2}}dt = \frac{4\la_{0}^{2}}{(b-a)^{2}} \int\limits_{a}^{b}|u(t)|^{2}dt + \int\limits_{a}^{b} d(t) J_{0}\left( \frac{\la_{0}}{R}t \right)^{2} \left| \left( \frac{u(t)}{d(t)^{\frac{1}{2}} J_{0}\left( \frac{\la_{0}}{R}t \right)} \right)' \right|^{2}dt
  \]
  \label{cor:one-dimensional-hardy-identity-corollary-2}
\end{corollary}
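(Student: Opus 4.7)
\textbf{Proof proposal for Corollary \ref{cor:one-dimensional-hardy-identity-corollary-2}.}

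The plan is to invoke Theorem \ref{thm:one-dimensional-hardy-identities} with the $2$-Bessel pair from Example \ref{B2} corresponding to the choices $\lambda = 0$ and $\Lambda = \lambda_{0}$. Set $R = (b-a)/2$. Then Example \ref{B2} furnishes the pair
\[
(V,W) \;=\; \left(1,\; \tfrac{1}{4}r^{-2} + \tfrac{\lambda_{0}^{2}}{R^{2}}\right)
\]
on $(0,R')$ for any $R' > R$ (one may shrink $R'$ to $R$ by the remark following Theorem \ref{thm:one-dimensional-hardy-identities}), with positive solution $\varphi(r) = r^{1/2} J_{0}(\lambda_{0} r / R)$. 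Theorem \ref{thm:one-dimensional-hardy-identities} with $p=2$ (so that $C_{2}(x,y) = |y|^{2}$) will then yield the identity
\[
\int_{a}^{b} |u'(t)|^{2}\,dt - \int_{a}^{b} W(d(t))|u(t)|^{2}\,dt = \int_{a}^{b} \varphi(d(t))^{2}\left|\left(\frac{u(t)}{\varphi(d(t))}\right)'\right|^{2}dt + 2\,\frac{\varphi'(R)}{\varphi(R)}\,\left|u\!\left(\tfrac{a+b}{2}\right)\right|^{2},
\]
where I have used $V \equiv 1$. Unpacking $W(d(t))$ gives the $\tfrac{1}{4}\int \frac{|u|^{2}}{d^{2}}$ term and the $\frac{4\lambda_{0}^{2}}{(b-a)^{2}} \int |u|^{2}$ term, and unpacking $\varphi^{2}$ gives the stated remainder.

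The key step, which makes the proof clean, is to verify that $R$ is a critical point of $\varphi$ so that the boundary term at $t = (a+b)/2$ drops out. Differentiating,
\[
\varphi'(r) = \tfrac{1}{2} r^{-1/2} J_{0}(\lambda_{0} r / R) + r^{1/2}\,\tfrac{\lambda_{0}}{R}\, J_{0}'(\lambda_{0} r / R),
\]
so that
\[
\varphi'(R) = R^{-1/2}\left[\tfrac{1}{2} J_{0}(\lambda_{0}) + \lambda_{0} J_{0}'(\lambda_{0})\right] = \tfrac{1}{2}R^{-1/2}\bigl[J_{0}(\lambda_{0}) + 2\lambda_{0} J_{0}'(\lambda_{0})\bigr] = 0
\]
by the very definition of Lamb's constant $\lambda_{0}$ as the first positive zero of $J_{0}(r) + 2r J_{0}'(r)$. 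Hence the boundary term vanishes and the second conclusion of Theorem \ref{thm:one-dimensional-hardy-identities} applies directly.

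The genuine conceptual content is thus twofold: identifying the pair in Example \ref{B2} as exactly the one whose eigenvalue-parameter $\Lambda^{2}/R^{2}$ produces the $4\lambda_{0}^{2}/(b-a)^{2}$ factor on the right-hand side, and observing that among all admissible $\Lambda$ the choice $\Lambda = \lambda_{0}$ is precisely what is needed to cancel the boundary contribution in Theorem \ref{thm:one-dimensional-hardy-identities}. There is no real obstacle beyond these two algebraic observations; the remaining work is routine substitution. This also transparently explains the appearance of Lamb's constant in the Avkhadiev--Wirths type improvement, as will be exploited in the domain-valued version later in the paper.
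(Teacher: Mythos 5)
Your proposal is correct and takes essentially the same route as the paper: apply Theorem \ref{thm:one-dimensional-hardy-identities} with $p=2$ to the Example \ref{B2} pair $(1,\tfrac14 r^{-2}+\tfrac{\lambda_0^2}{R^2})$ with solution $\varphi(r)=r^{1/2}J_0(\lambda_0 r/R)$, and use $\varphi'(R)=0$ (the defining property of Lamb's constant) to make the boundary term vanish. The only minor imprecision is the claim that the pair is a Bessel pair on $(0,R')$ ``for any $R'>R$'': positivity of $\varphi$ only persists up to $R'=\tfrac{z_0}{\lambda_0}R$, which is exactly the choice the paper makes, but since some $R'>R$ suffices this does not affect the argument.
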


\begin{proof}
  Let $R=\frac{b-a}{2}$ denote the inradius of $(a,b)$.
  This identity follows from applying Theorem \ref{thm:one-dimensional-hardy-identities} with the $p$-Bessel pair
  \[
    (V,W)=\left( 1,\left( \frac{1}{2} \right)^{2}r^{-2}+\frac{\la_{0}^{2}}{R^{2}} \right)
  \]
  which has as a positive solution the function $\varphi=r^{\frac{1}{2}}J_{0}\left( \frac{\la_{0}}{R}r \right)$ on the interval $(0,R')$, where $R'=\frac{\la_{1}}{\la_{0}}R>R$ (recall that $\la_{1}$ is the first positive zero of $J_{0}(t)$ and that $\la_{0}<\la_{1}$).
  See Example \ref{B2}.
  Note that $\varphi'(R)=0$.
\end{proof}

\subsection{Hardy Identities and Inequalities for Domains}\label{section:hardy-identities}
In this section, we state and prove Hardy identities for general domains.
We emphasize the appearance of the distributional Laplacian of the distance function $d_{\Omega}$ due to $d_{\Omega}$ generally being at most Lipschitz continuous.
Subsequently, we will write the identities using geodesic normal coordinates when applicable and point out that, for weakly mean convex domains and appropriately chosen $p$-Bessel pairs, the identities are improvements of certain sharp Hardy inequalities.

We begin by stating the main identity theorem in this section.

\begin{theorem}
  \label{thm:bessel-pair-hardy-identity-general-domain}
  Let $1 < p<\oo$, let $\Omega\subsetneq\R^{N}$ have inradius $0<\rho\leq\oo$, and let $0<R\leq\oo$ be such that $\rho < R$ when $\rho<\oo$ and $R=\oo$ when $\rho = \oo$.
  Suppose $(V,W)$ is a $p$-Bessel pair on $(0,R)$ with positive solution $\varphi$.
  Then, for $u\in{C_{0}^{\oo}(\Omega)}$, there holds
  \begin{equation}
    \begin{aligned}
      &\int\limits_{\Omega} V(d_{\Omega}(x)) |\grad u(x)|^{p} dx -\int\limits_{\Omega}W\left( d_{\Omega}(x) \right) |u(x)|^{p} dx  \\
      &= \int\limits_{\Omega}V(d_{\Omega}(x)) C_{p}\left( \grad u(x), \varphi(d_{\Omega}(x)) \grad\left( \frac{u(x)}{\varphi(d_{\Omega}(x))} \right) \right)\\
      &- \gen{\Delta d_{\Omega},V \circ d_{\Omega}\left|\frac{\varphi' \circ d_{\Omega}}{\varphi \circ d_{\Omega}}\right|^{p-2}\frac{\varphi' \circ d_{\Omega}}{\varphi \circ d_{\Omega}} |u|^{p}}
    \end{aligned}
    \label{eq:main-hardy-identity-one}
  \end{equation}
  and
  \begin{equation}
    \begin{aligned}
      &\int\limits_{\Omega}{V(d_{\Omega}(x))}|\nabla{d_{\Omega}(x)}\cdot\nabla{u(x)}|^{p}dx -  \int\limits_{\Omega}W\left( d_{\Omega}(x) \right) |u(x)|^{p} dx \\
      &=\int\limits_{\Omega}{V(d_{\Omega}(x))}C_{p}\left(\nabla{d_{\Omega}(x)}\cdot{u(x)},\varphi(d_{\Omega}(x))\nabla{d_{\Omega}(x)}\cdot\nabla\left( \frac{u(x)}{\varphi(d_{\Omega}(x))} \right)\right)dx\\
      &- \gen{\Delta d_{\Omega},V \circ d_{\Omega}\left|\frac{\varphi' \circ d_{\Omega}}{\varphi \circ d_{\Omega}}\right|^{p-2}\frac{\varphi' \circ d_{\Omega}}{\varphi \circ d_{\Omega}} |u|^{p}}.
    \end{aligned}
    \label{eq:main-hardy-identity-two}
  \end{equation}
  Here, $\gen{\Delta d_{\Omega},\cdot}$ denotes the pairing with the distributional Laplacian of $d_{\Omega}$.
\end{theorem}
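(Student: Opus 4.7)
The plan is to mimic the one-dimensional argument of Theorem~\ref{thm:one-dimensional-hardy-identities}, with two new multi-dimensional ingredients: the eikonal identity $|\nabla d_{\Omega}|^{2}=1$ a.e.\ and the distributional meaning of $\Delta d_{\Omega}$ recalled at the end of Section~\ref{preliminaries}. First I would set
\[
  F(r) = V(r)|\varphi'(r)|^{p-2}\varphi'(r),
\]
so that the $p$-Bessel pair equation \eqref{BesselPair} gives $F'(r) = -W(r)\varphi(r)^{p-1}$ on $(0,R)$. Composing with $d_{\Omega}$ and multiplying by $|u|^{p}/\varphi(d_{\Omega})^{p-1}$ yields the pointwise a.e.\ relation $-W(d_{\Omega})|u|^{p} = F'(d_{\Omega})\,|u|^{p}/\varphi(d_{\Omega})^{p-1}$, which is the scalar form I want to integrate against.

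Next, to recast $F'(d_{\Omega})$ as a divergence modulo a $\Delta d_{\Omega}$-term, I would plug the Lipschitz, compactly supported function $h := F(d_{\Omega})\,|u|^{p}/\varphi(d_{\Omega})^{p-1}$ into the definition of the distributional Laplacian of $d_{\Omega}$. Expanding $\nabla h$ by the Leibniz rule, using $\nabla(F\circ d_{\Omega}) = F'(d_{\Omega})\nabla d_{\Omega}$ a.e., and collapsing the $|\nabla d_{\Omega}|^{2}=1$ factor, should give
\[
  \gen{\Delta d_{\Omega}, h} = -\int_{\Omega}\nabla d_{\Omega}\cdot \nabla h\, dx = \int_{\Omega} W(d_{\Omega})|u|^{p}\,dx - \int_{\Omega} F(d_{\Omega})\,\nabla d_{\Omega}\cdot \nabla\!\left(\frac{|u|^{p}}{\varphi(d_{\Omega})^{p-1}}\right) dx.
\]
Since $F/\varphi^{p-1} = V|\varphi'/\varphi|^{p-2}(\varphi'/\varphi)$, the left-hand side is already the cut-locus/boundary term that appears in \eqref{eq:main-hardy-identity-one}, so what remains is to identify the bulk integrand.

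The remaining step is the purely algebraic pointwise a.e.\ identity
\[
  V(d_{\Omega})|\nabla u|^{p} = V(d_{\Omega})\, C_{p}\!\left(\nabla u,\, \varphi(d_{\Omega})\nabla(u/\varphi(d_{\Omega}))\right) + F(d_{\Omega})\,\nabla d_{\Omega}\cdot \nabla\!\left(\frac{|u|^{p}}{\varphi(d_{\Omega})^{p-1}}\right),
\]
which I would verify by direct calculation. The key observation is that
\[
  \nabla u - \varphi(d_{\Omega})\nabla\!\left(\frac{u}{\varphi(d_{\Omega})}\right) = \frac{u\,\varphi'(d_{\Omega})}{\varphi(d_{\Omega})}\,\nabla d_{\Omega},
\]
so the ``difference'' entry feeding into $C_{p}$ points along $\nabla d_{\Omega}$; expanding the definition of $C_{p}$ and using $|\nabla d_{\Omega}|=1$ a.e.\ reduces matters to comparing coefficients of $|\nabla u|^{p}$, $|u|^{p}|\varphi'/\varphi|^{p}$, and $|u|^{p-2}u(\varphi'/\varphi)(\nabla d_{\Omega}\cdot \nabla u)$ on the two sides. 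Integrating this identity over $\Omega$ and subtracting the expression for $\int_{\Omega} W(d_{\Omega})|u|^{p}$ from the previous paragraph gives \eqref{eq:main-hardy-identity-one}. Identity \eqref{eq:main-hardy-identity-two} will follow by exactly the same scheme: because the difference vector above has no component perpendicular to $\nabla d_{\Omega}$, the algebra only ever involves the normal derivative $\nabla d_{\Omega}\cdot \nabla u$, and so replacing $\nabla u$ by $\nabla d_{\Omega}\cdot \nabla u$ throughout the calculation (together with the scalar form of $C_{p}$) reproduces the same right-hand side.

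The main technical hurdle will be the rigorous justification that $\gen{\Delta d_{\Omega}, h}$ is well-defined and equals $-\int_{\Omega}\nabla d_{\Omega}\cdot \nabla h\,dx$ for the merely Lipschitz function $h$ above, rather than a $C_{0}^{\infty}$ test function. Since $d_{\Omega}$ is Lipschitz, the gradient pairing extends by density from $C_{0}^{\infty}(\Omega)$ to compactly supported Lipschitz test objects, and this extension can be taken as the working definition. When $\Omega$ is $C^{2,1}$ one can alternatively invoke the decomposition \eqref{eq:distributional-laplacian-decomposition} together with \eqref{eq:divergence-theorem-for-good-laplacian} to split $\gen{\Delta d_{\Omega}, h}$ into a classical volume integral on $G(\Omega)$ against $\Delta_{G}d_{\Omega}$ plus the surface contribution from $\Delta_{\Sigma}d_{\Omega}$, and then verify that both interpretations return the same value.
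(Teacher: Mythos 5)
Your proposal is correct and follows essentially the same route as the paper's proof: the same function $F(r)=V(r)|\varphi'(r)|^{p-2}\varphi'(r)$, the Bessel-pair ODE plus the eikonal equation, the observation that $\nabla u-\varphi(d_{\Omega})\nabla(u/\varphi(d_{\Omega}))$ is parallel to $\nabla d_{\Omega}$ feeding the same $C_{p}$ expansion, and the same integration by parts producing the $\gen{\Delta d_{\Omega},\cdot}$ term (the paper phrases it via \eqref{eq:divergence-theorem-for-good-laplacian} starting from $\int_{\Omega}W(d_{\Omega})|u|^{p}dx$, while you test $\Delta d_{\Omega}$ directly against the Lipschitz function $h$, which is the same computation). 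Your explicit remark that the pairing extends from $C_{0}^{\infty}$ to compactly supported Lipschitz test functions is a point the paper leaves implicit, and it is handled correctly since $u$ has compact support so $d_{\Omega}$ is bounded away from zero there.
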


\def\zerodr{\int\limits_{\Omega}}
\begin{proof}
  Let
  \[
    F(r)=V(r)|\varphi(r)'|^{p-2}\varphi'(r)
  \]
  be given on $(0,R)$.
  Since $d_{\Omega}$ is Lipschitz and satisfies the eikonal equation, we have
  \[
    F'(d_{\Omega})=F'(d_{\Omega})\nabla{d_{\Omega}}\cdot\nabla{d_{\Omega}}=\nabla(F(d_{\Omega}))\cdot\nabla{d_{\Omega}}
  \]
  holds almost everywhere in $\Omega$.
  Next, since $(V,W)$ is a $p$-Bessel pair, we have
  \begin{equation}
    - W(d_{\Omega}) \varphi^{p-1}(d_{\Omega}) = F'(d_{\Omega})=\nabla(F(d_{\Omega}))\cdot\nabla{d_{\Omega}}
    \label{eq:main-theorem-1-proof-equation-1}
  \end{equation}
  almost everywhere in $\Omega$.
  For convenience, let
  \[
    \psi(x)=\frac{|u(x)|^{p}}{\varphi^{p-1}(d_{\Omega}(x))}.
  \]
  Thus, using the divergence theorem formula given in \eqref{eq:divergence-theorem-for-good-laplacian}, we find
  \begin{align*}
    \int\limits_{\Omega}W\left( d_{\Omega}(x) \right) |u(x)|^{p} dx & = - \int\limits_{\Omega} \psi(x) \left[ -W(d_{\Omega}(x))\varphi^{p-1}(d_{\Omega}(x)) \right] dx\\
    &=-\int\limits_{\Omega} \psi(x) \grad F(d_{\Omega}(x)) \cdot \grad d_{\Omega}(x) dx\\
    &= \int\limits_{\Omega} F(d_{\Omega}(x)) \grad \psi(x) \cdot \grad d_{\Omega}(x) dx \\
    &+ \gen{\Delta d_{\Omega}, F(d_{\Omega}) \psi}.
  \end{align*}

  Next, we compute
  \begin{align*}
    F(d_{\Omega}(x))\psi(x) & =V(d_{\Omega}(x))\left|\frac{\varphi'(d_{\Omega}(x))}{\varphi(d_{\Omega}(x))}\right|^{p-2}\frac{\varphi'(d_{\Omega}(x))}{\varphi(d_{\Omega}(x))} |u(x)|^{p} \\
    F(d_{\Omega}(x)) \grad \psi(x) \cdot \grad d_{\Omega}(x) &= pV(d_{\Omega}(x)) \left| \frac{\varphi'(d_{\Omega}(x))}{\varphi(d_{\Omega}(x))} \right|^{p-2} \frac{\varphi'(d_{\Omega}(x))}{\varphi(d_{\Omega})(x)} |u(x)|^{p-2}u(x) \grad u(x) \cdot \grad d_{\Omega}(x) \\
    &-(p-1) V(d_{\Omega}(x)) \left| \frac{\varphi'(d_{\Omega}(x))}{\varphi(d_{\Omega}(x))} \right|^{p}|u(x)|^{p}
  \end{align*}
  and
  \begin{align*}
    C_{p}\left( \grad u(x), \varphi(d_{\Omega}(x)) \grad\left( \frac{u(x)}{\varphi(d_{\Omega}(x))} \right) \right) &= |\grad u(x)|^{p} + (p-1) \left| \frac{\varphi'(d_{\Omega}(x))}{\varphi(d_{\Omega}(x))} \right|^{p} |u(x)|^{p}\\
    &-p \left| \frac{\varphi'(d_{\Omega}(x))}{\varphi(d_{\Omega}(x))}\right|^{p-2} \frac{\varphi'(d_{\Omega}(x))}{\varphi(d_{\Omega}(x))}\\
    &\times |u(x)|^{p-2}u(x) \grad u(x) \cdot \grad d_{\Omega}(x).
  \end{align*}
  Putting this together, we have
  \begin{align*}
    &\int\limits_{\Omega} V(d_{\Omega}(x)) |\grad u(x)|^{p} dx -\int\limits_{\Omega}W\left( d_{\Omega}(x) \right) |u(x)|^{p} dx  \\
    &= \int\limits_{\Omega}V(d_{\Omega}(x)) C_{p}\left( \grad u(x), \varphi(d_{\Omega}(x)) \grad\left( \frac{u(x)}{\varphi(d_{\Omega}(x))} \right) \right)\\
    &- \gen{\Delta d_{\Omega},V \circ d_{\Omega}\left|\frac{\varphi' \circ d_{\Omega}}{\varphi \circ d_{\Omega}}\right|^{p-2}\frac{\varphi' \circ d_{\Omega}}{\varphi \circ d_{\Omega}} |u|^{p}}.
  \end{align*}
  A similar computation provides
  \begin{align*}
    &\int\limits_{\Omega}{V(d_{\Omega}(x))}|\nabla{d_{\Omega}(x)}\cdot\nabla{u(x)}|^{p}dx -  \int\limits_{\Omega}W\left( d_{\Omega}(x) \right) |u(x)|^{p} dx \\
    &=\int\limits_{\Omega}{V(d_{\Omega}(x))}C_{p}\left(\nabla{d_{\Omega}(x)}\cdot{u(x)},\varphi(d_{\Omega}(x))\nabla{d_{\Omega}(x)}\cdot\nabla\left( \frac{u(x)}{\varphi(d_{\Omega}(x))} \right)\right)dx\\
    &- \gen{\Delta d_{\Omega},V \circ d_{\Omega}\left|\frac{\varphi' \circ d_{\Omega}}{\varphi \circ d_{\Omega}}\right|^{p-2}\frac{\varphi' \circ d_{\Omega}}{\varphi \circ d_{\Omega}} |u|^{p}}.
  \end{align*}
  These are the desired identities.

\end{proof}

\begin{remark}
  Similar to the one-dimensional case, it may possible to take $\rho = R$ when $\rho <\oo$ provided
  \[
    \gen{\Delta d_{\Omega},V \circ d_{\Omega}\left|\frac{\varphi' \circ d_{\Omega}}{\varphi \circ d_{\Omega}}\right|^{p-2}\frac{\varphi' \circ d_{\Omega}}{\varphi \circ d_{\Omega}} |u|^{p}}
  \]
  is well-defined.

\end{remark}

We first point out that, under a suitable condition on $d_{\Omega}$, the aforementioned identities in fact improve known weighted Hardy inequalities.
It should also be pointed out that these improvements are only meaningful when the given Hardy inequality is sharp and that choosing certain $p$-Bessel pairs may in fact provide sharp Hardy inequalities.
See the applications that follow the following corollary.

\begin{corollary}\label{cor:improvements-corollary}
  Assuming the hypotheses of Theorem \ref{thm:bessel-pair-hardy-identity-general-domain} and that $-\varphi'\Delta d_{\Omega} \geq 0$ in the sense of distributions, the identities \eqref{eq:main-hardy-identity-one} and \eqref{eq:main-hardy-identity-two} are, respectively, improvements of the inequalities
  \begin{equation}
    \int\limits_{\Omega} V(d_{\Omega}(x)) |\grad u(x)|^{p} dx \geq \int\limits_{\Omega}W\left( d_{\Omega}(x) \right) |u(x)|^{p} dx
    \label{eq:improvements-corollary-equation1}
  \end{equation}
  and
  \begin{equation}
    \int\limits_{\Omega}{V(d_{\Omega}(x))}|\nabla{d_{\Omega}(x)}\cdot\nabla{u(x)}|^{p}dx \geq  \int\limits_{\Omega}W\left( d_{\Omega}(x) \right) |u(x)|^{p} dx,
    \label{eq:improvmenets-corollary-equation2}
  \end{equation}
  where $u \in C_{0}^{\oo}(\Omega)$.
  If $\Omega$ is additionally $C^{2}$, then there does not exist nontrivial function $u \in W_{0}^{1,p}(\Omega,V,W)$ for which \eqref{eq:improvements-corollary-equation1} or \eqref{eq:improvmenets-corollary-equation2} is an equality.
\end{corollary}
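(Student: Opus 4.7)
For the first assertion, my plan is to verify that both terms on the right-hand sides of \eqref{eq:main-hardy-identity-one} and \eqref{eq:main-hardy-identity-two} are nonnegative. The $C_{p}$-integrand is nonnegative by Lemma \ref{lem:cp-properties} combined with the positivity of $V$. For the distributional term I would factor $\varphi'(d_{\Omega})$ out of the test function to rewrite the pairing as $\langle -\varphi'(d_{\Omega})\Delta d_{\Omega},\Psi\rangle$, where
\[
\Psi := V(d_{\Omega})|\varphi'(d_{\Omega})|^{p-2}\varphi(d_{\Omega})^{1-p}|u|^{p}\geq 0.
\]
The hypothesis that $-\varphi'\Delta d_{\Omega}$ is a nonnegative distribution then makes this pairing nonnegative, and summing the two nonnegative contributions yields the claimed inequalities.

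For the nonexistence assertion, I would first extend both identities from $C_{0}^{\infty}(\Omega)$ to $W_{0}^{1,p}(\Omega,V,W)$ by the defining density, using continuity of each term in the weighted norm (and controlling the distributional term via the decomposition \eqref{eq:distributional-laplacian-decomposition}). Now suppose $u\in W_{0}^{1,p}(\Omega,V,W)$ is nontrivial and equality holds in \eqref{eq:improvements-corollary-equation1}. Both nonnegative terms in \eqref{eq:main-hardy-identity-one} must then vanish. The vanishing of the $C_{p}$-term, together with Lemma \ref{lem:cp-properties}, forces $\varphi(d_{\Omega})\nabla(u/\varphi(d_{\Omega}))=0$ a.e., hence $u=c\,\varphi(d_{\Omega})$ on the connected set $\Omega$ for some constant $c\in\R$. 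Substituting back, $\Psi$ becomes strictly positive on all of $\Omega$ (using that $\varphi,\varphi'$ are both nowhere-zero, the latter by the sign convention imposed on $p$-Bessel pairs), so the pairing $\langle -\varphi'(d_{\Omega})\Delta d_{\Omega},\Psi\rangle=0$ is possible only if the nonnegative distribution $-\varphi'(d_{\Omega})\Delta d_{\Omega}$ vanishes identically; dividing by the non-vanishing $\varphi'(d_{\Omega})$ yields $\Delta d_{\Omega}=0$ in the distributional sense on $\Omega$.

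Since $\Omega$ is $C^{2}$, Corollary \ref{cor:bounded-implies-not-bounded} then forces $\Omega$ to be unbounded with flat boundary, hence a halfspace or an infinite slab; in either case $\Omega$ factors as $I\times\R^{N-1}$ with $d_{\Omega}$ depending only on the transverse coordinate $t\in I$. The norm of the candidate extremal then factors as
\[
\|u\|_{W_{0}^{1,p}(\Omega,V,W)}^{p}=|c|^{p}\cdot\mathcal{H}^{N-1}(\R^{N-1})\cdot\int_{I}\bigl[V(t)|\varphi'(t)|^{p}+W(t)\varphi(t)^{p}\bigr]dt,
\]
with the transverse factor infinite and the inner integral strictly positive, so finiteness of the norm forces $c=0$, contradicting nontriviality. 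The argument for \eqref{eq:improvmenets-corollary-equation2} runs in parallel: the weaker $C_{p}$-vanishing there only forces $u/\varphi(d_{\Omega})$ to be constant along the integral curves of $\nabla d_{\Omega}$, producing after reduction a product form $u(x)=g(x')\varphi(d_{\Omega}(x))$ on the halfspace or slab, whose full $W_{0}^{1,p}(\Omega,V,W)$-norm again suffers the same infinite transverse obstruction for any nontrivial $g$. The main obstacle I expect is the density step, since for $u$ merely in the weighted Sobolev space the factor $|u|^{p}$ need not be locally integrable against Lebesgue measure near the cut locus, and one must exploit the explicit $(N-1)$-dimensional representation of $\Delta_{\Sigma}d_{\Omega}$ in \eqref{eq:distributional-laplacian-cut-locus} to justify the limit passage.
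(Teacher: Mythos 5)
Your overall route is the same as the paper's: nonnegativity of the $C_{p}$-term together with the rewriting $-\gen{\Delta d_{\Omega},\varphi'(d_{\Omega})\Psi}=\gen{-\varphi'(d_{\Omega})\Delta d_{\Omega},\Psi}\geq 0$ gives the improvements, and for nonexistence you argue, as the paper does, that equality forces the $C_{p}$-term and the distributional term to vanish, hence $u=c\,\varphi(d_{\Omega})$ and $\Delta d_{\Omega}=0$, then invoke Corollary \ref{cor:bounded-implies-not-bounded} to reduce to a halfspace/slab and kill the candidate extremal by non-integrability over the unbounded transverse directions. In fact you are more explicit than the paper at two points it glosses over: the extension of the identities from $C_{0}^{\oo}(\Omega)$ to $W_{0}^{1,p}(\Omega,V,W)$, and the passage from ``the pairing vanishes'' to ``$\Delta d_{\Omega}=0$'' via strict positivity of $\Psi$ (note, though, that your claim that $\varphi'$ is nowhere zero is only backed by the paper's convention when $1<p<2$; for $p>2$ this is an implicit assumption, present equally in the paper's own proof).

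The one step of yours that would fail as written is the treatment of \eqref{eq:improvmenets-corollary-equation2}. You correctly observe (more honestly than the paper, which simply says ``similarly'' and asserts $u=c\,\varphi(d_{\Omega})$) that the directional $C_{p}$-vanishing only yields $u=g(x')\,\varphi(d_{\Omega})$ on the halfspace. But your concluding claim that such a $u$ ``again suffers the same infinite transverse obstruction for any nontrivial $g$'' is not true: if $g$ is, say, smooth and compactly supported in $\R^{N-1}$, the transverse integrals $\int|g|^{p}dx'$ and $\int|\grad' g|^{p}dx'$ are finite, so the norm splits as finite transverse factors times the one-dimensional profile integrals $\int_{I}V|\varphi'|^{p}dt$, $\int_{I}V\varphi^{p}dt$, $\int_{I}W\varphi^{p}dt$, and infinitude must come from these — a statement that depends on the particular $p$-Bessel pair and is nowhere argued (it does hold for the classical pairs, where $W\varphi^{p}\sim t^{-1}$ near $t=0$, but not by the reasoning you give). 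So to complete the directional case you either need to justify the paper's stronger rigidity claim or supply a divergence argument for the one-dimensional profile integrals; as it stands this is a concrete gap.
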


\begin{remark}
  We recall that $-\Delta d_{\Omega} \geq 0$ holds for convex domains (and is in fact equivalent to convexity when $N=2$) and for weakly mean convex domains in case $\Omega$ is $C^{2}$.
\end{remark}

\begin{proof}
  That the identities \eqref{eq:main-hardy-identity-one} and \eqref{eq:main-hardy-identity-two} are improvements follows from

  \[
    -\gen{\Delta d_{\Omega},V \circ d_{\Omega}\left|\frac{\varphi' \circ d_{\Omega}}{\varphi \circ d_{\Omega}}\right|^{p-2}\frac{\varphi' \circ d_{\Omega}}{\varphi \circ d_{\Omega}} |u|^{p}} \geq0,
  \]
  which follows from the nonnegativity of
  \[
    V \circ d_{\Omega}\left|\frac{\varphi' \circ d_{\Omega}}{\varphi \circ d_{\Omega}}\right|^{p-2}\frac{\varphi' \circ d_{\Omega}}{\varphi \circ d_{\Omega}} |u|^{p}
  \]
  and $-\Delta d_{\Omega}$.

  Concerning the statement about extremality, suppose $u \in W_{0}^{1,p}(\Omega,V,W)$ is a function not identically equal to zero.
  By Theorem \ref{thm:bessel-pair-hardy-identity-general-domain}, there holds
  \[
    \int\limits_{\Omega} V(d_{\Omega}(x)) |\grad u(x)|^{p} dx = \int\limits_{\Omega}W\left( d_{\Omega}(x) \right) |u(x)|^{p} dx
  \]
  if and only if the remainder term vanishes, i.e.,
  \begin{align*}
    &\int\limits_{\Omega}V(d_{\Omega}(x)) C_{p}\left( \grad u(x), \varphi(d_{\Omega}(x)) \grad\left( \frac{u(x)}{\varphi(d_{\Omega}(x))} \right) \right)\\
    &- \gen{\Delta d_{\Omega},V \circ d_{\Omega}\left|\frac{\varphi' \circ d_{\Omega}}{\varphi \circ d_{\Omega}}\right|^{p-2}\frac{\varphi' \circ d_{\Omega}}{\varphi \circ d_{\Omega}} |u|^{p}}=0.
  \end{align*}
  Under the assumptions on $\Delta d_{\Omega}$ and $\varphi'$, this holds if and only if (see Lemma \ref{lem:cp-properties})
  \[
    \varphi(d_{\Omega}(x)) \grad\left( \frac{u(x)}{\varphi(d_{\Omega}(x))} \right)=0
  \]
  and $\Delta d_{\Omega} = 0$ in the sense of distributions, i.e., if and only if $u(x) = c \varphi(d_{\Omega}(x))$ for some nonzero $c \in \R$ and $d_{\Omega}$ is harmonic.
  Now Corollary \ref{cor:bounded-implies-not-bounded} implies $\Omega$ is either a strip or halfspace.
  As such, $\varphi(d_{\Omega})$ is constant on hyperplanes contained in $\Omega$ which are parallel to $\p\Omega$ and so it is clear that $\varphi(d_{\Omega})$ fails to be integrable.
  The result follows.

  Similarly,
  \[
    \int\limits_{\Omega}{V(d_{\Omega}(x))}|\nabla{d_{\Omega}(x)}\cdot\nabla{u(x)}|^{p}dx =  \int\limits_{\Omega}W\left( d_{\Omega}(x) \right) |u(x)|^{p} dx
  \]
  for a nontrivial $u$ if and only if $u(x) = c \varphi(d_{\Omega}(x))$ for some nonzero $c \in \R$ and $d_{\Omega}$ is harmonic.
\end{proof}

Next, we point out that in case $\Omega$ is assumed to be $C^{2,1}$, then the remainder term defined in terms of the distribution $\Delta d_{\Omega}$ may be explicitly written in terms of curvature and geodesic normal coordinates (see \eqref{eq:distributional-laplacian-cut-locus} and the preceding discussion).
Due to its geometric interest, we demonstrate and record this in the following corollary.

\begin{corollary}\label{cor:bessel-pair-hardy-identity-weakly-mean-convex-domains}
  In addition to the hypotheses of Theorem \ref{thm:bessel-pair-hardy-identity-general-domain} and assume $\Omega$ is $C^{2,1}$.
  Let $y=N(x)$ denote the near point of $x \in G$.
  Then, for $u \in C_{0}^{\oo}(\Omega)$, there holds (in geodesic normal coordinates)
  \begin{align*}
    &\int\limits_{\Omega} V(d_{\Omega}(x)) |\grad u(x)|^{p} dx -\int\limits_{\Omega}W\left( d_{\Omega}(x) \right) |u(x)|^{p} dx  \\
    &= \int\limits_{\Omega}V(d_{\Omega}(x)) C_{p}\left( \grad u(x), \varphi(d_{\Omega}(x)) \grad\left( \frac{u(x)}{\varphi(d_{\Omega}(x))} \right) \right)\\
    &- \int\limits_{\Omega} V(d_{\Omega}(x))\left|\frac{\varphi'(d_{\Omega}(x))}{\varphi(d_{\Omega}(x))}\right|^{p-2}\frac{\varphi'(d_{\Omega}(x))}{\varphi(d_{\Omega}(x))} |u(x)|^{p} \sum_{j=1}^{N-1}\frac{k_{j}(y)}{1+d_{\Omega}(x)k_{j}(y)} dx\\
    &+ \int\limits_{\p\Omega} V(c(y))\left|\frac{\varphi'(c(y))}{\varphi(c(y))}\right|^{p-2}\frac{\varphi'(c(y))}{\varphi(c(y))} |u(c(y),y)|^{p}   \theta(c(y),y) d\mcH^{N-1}(y),
  \end{align*}
  and
  \begin{align*}
    &\int\limits_{\Omega}{V(d_{\Omega}(x))}|\nabla{d_{\Omega}(x)}\cdot\nabla{u(x)}|^{p}dx - \int\limits_{\Omega}W\left( d_{\Omega}(x) \right) |u(x)|^{p} dx\\
    =&\int\limits_{\Omega}{V(d_{\Omega}(x))}C_{p}\left(\nabla{d_{\Omega}(x)}\cdot{u(x)},\varphi(d_{\Omega}(x))\nabla{d_{\Omega}(x)}\cdot\nabla\left( \frac{u(x)}{\varphi(d_{\Omega}(x))} \right)\right)dx\\
    &- \int\limits_{\Omega} V(d_{\Omega}(x)) |u(x)|^{p} \varphi(d_{\Omega}(x))^{1-p} |\varphi'(d_{\Omega}(x))|^{p-2} \varphi'(d_{\Omega}(x))  \sum_{j=1}^{N-1}\frac{k_{j}(y)}{1+d_{\Omega}(x)k_{j}(y)} dx\\
    &+ \int\limits_{\p\Omega} V(c(y))\left|\frac{\varphi'(c(y))}{\varphi(c(y))}\right|^{p-2}\frac{\varphi'(c(y))}{\varphi(c(y))} |u(c(y),y)|^{p}   \theta(c(y),y) d\mcH^{N-1}(y).
  \end{align*}
\end{corollary}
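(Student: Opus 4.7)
The plan is straightforward: substitute the decomposition $\Delta d_{\Omega} = \Delta_{G}d_{\Omega} + \Delta_{\Sigma}d_{\Omega}$ from \eqref{eq:distributional-laplacian-decomposition} into the two identities \eqref{eq:main-hardy-identity-one} and \eqref{eq:main-hardy-identity-two} of Theorem \ref{thm:bessel-pair-hardy-identity-general-domain}, and then unpack each piece by hand using the explicit descriptions of $\Delta_{G}d_{\Omega}$ and $\Delta_{\Sigma}d_{\Omega}$. Setting
\[
\Phi(x) := V(d_{\Omega}(x))\left|\tfrac{\varphi'(d_{\Omega}(x))}{\varphi(d_{\Omega}(x))}\right|^{p-2}\tfrac{\varphi'(d_{\Omega}(x))}{\varphi(d_{\Omega}(x))}\, |u(x)|^{p},
\]
the Theorem's remainder term $-\gen{\Delta d_{\Omega}, \Phi}$ splits as $-\gen{\Delta_{G}d_{\Omega}, \Phi} - \gen{\Delta_{\Sigma}d_{\Omega}, \Phi}$, and each summand gives exactly one of the two new terms in the stated identity.

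For the regular piece, I would invoke the $C^{2,1}$ hypothesis to conclude (via the cited results of \cite{IT,LN}) that the cut locus $\Sigma$ has finite $(N-1)$-dimensional Hausdorff measure, and hence Lebesgue measure zero, so integration over $G$ coincides with integration over $\Omega$. On $G$, Lemma \ref{lem:laplace-curvature-identity} gives $\Delta d_{\Omega}(x) = \sum_{j=1}^{N-1} k_{j}(y)/(1 + d_{\Omega}(x) k_{j}(y))$ with $y = N(x)$; plugging this pointwise formula into $\gen{\Delta_{G}d_{\Omega}, \Phi}$ produces exactly the first of the two new curvature integrals displayed in the corollary. The $1 + d_{\Omega}(x)k_{j}(y) > 0$ assertion of the same lemma guarantees the integrand is defined almost everywhere.

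For the singular piece, the definition \eqref{eq:distributional-laplacian-cut-locus} of $\Delta_{\Sigma}d_{\Omega}$ immediately yields
\[
-\gen{\Delta_{\Sigma}d_{\Omega}, \Phi} = \int_{\p\Omega} \Phi(c(y), y)\, \theta(c(y), y)\, d\mcH^{N-1}(y),
\]
which, after substituting the definition of $\Phi$, is precisely the boundary integral in the conclusion. The second identity (with $\nabla d_{\Omega}\cdot\nabla u$ in place of $\nabla u$) is obtained by applying the same substitutions verbatim to \eqref{eq:main-hardy-identity-two}.

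The only technical point I expect to require care is the admissibility of $\Phi$ as an argument of the distributional pairing $\gen{\Delta d_{\Omega}, \cdot}$, since $d_{\Omega}$ is only Lipschitz and so $\Phi$ is a Lipschitz (rather than smooth) compactly supported function on $\Omega$. This is handled in the standard way: because $u \in C_{0}^{\oo}(\Omega)$ has compact support and $V, \varphi, \varphi' \in C^{1}((0,R))$ with $\varphi$ positive, $\Phi$ is a well-defined Lipschitz function with $\operatorname{supp} \Phi \subset \operatorname{supp} u \Subset \Omega$, so $\gen{\Delta d_{\Omega}, \Phi}$ may be defined by $-\int_{\Omega} \nabla d_{\Omega} \cdot \nabla \Phi\, dx$ and the decomposition \eqref{eq:distributional-laplacian-decomposition} extends to such test functions by mollifying $\Phi$; no additional structural input beyond Theorem \ref{thm:bessel-pair-hardy-identity-general-domain} is needed.
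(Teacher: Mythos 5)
Your proposal is correct and follows essentially the same route as the paper's proof: apply the decomposition \eqref{eq:distributional-laplacian-decomposition}, use the $C^{2,1}$ hypothesis to make the cut locus Lebesgue-null so Lemma \ref{lem:laplace-curvature-identity} converts $\gen{\Delta_{G}d_{\Omega},\cdot}$ into the curvature integral, and use \eqref{eq:distributional-laplacian-cut-locus} for the boundary term, with the signs handled exactly as in the paper. Your closing remark on the admissibility of the Lipschitz test function $\Phi$ is a small extra care the paper leaves implicit, not a different approach.
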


\begin{proof}
  First apply the decomposition \eqref{eq:distributional-laplacian-decomposition} to $\Delta d_{\Omega}$ to obtain
  \begin{align*}
    \gen{\Delta d_{\Omega},V \circ d_{\Omega}\left|\frac{\varphi' \circ d_{\Omega}}{\varphi \circ d_{\Omega}}\right|^{p-2}\frac{\varphi' \circ d_{\Omega}}{\varphi \circ d_{\Omega}} |u|^{p}} &= \gen{\Delta_{G} d_{\Omega},V \circ d_{\Omega}\left|\frac{\varphi' \circ d_{\Omega}}{\varphi \circ d_{\Omega}}\right|^{p-2}\frac{\varphi' \circ d_{\Omega}}{\varphi \circ d_{\Omega}} |u|^{p}} \\
    &+\gen{\Delta_{\Sigma} d_{\Omega},V \circ d_{\Omega}\left|\frac{\varphi' \circ d_{\Omega}}{\varphi \circ d_{\Omega}}\right|^{p-2}\frac{\varphi' \circ d_{\Omega}}{\varphi \circ d_{\Omega}} |u|^{p}}.
  \end{align*}
  Next, using that the cut locus $\Sigma$ has zero Lebesgue measure by $\Omega$ being $C^{2,1}$, we may use Lemma \ref{lem:laplace-curvature-identity} to obtain
  \begin{align*}
    &\gen{\Delta_{G} d_{\Omega},V \circ d_{\Omega}\left|\frac{\varphi' \circ d_{\Omega}}{\varphi \circ d_{\Omega}}\right|^{p-2}\frac{\varphi' \circ d_{\Omega}}{\varphi \circ d_{\Omega}} |u|^{p}} =\\
    &\int\limits_{\Omega} V(d_{\Omega}(x)) |u(x)|^{p} \varphi(d_{\Omega}(x))^{1-p} |\varphi'(d_{\Omega}(x))|^{p-2} \varphi'(d_{\Omega}(x))  \sum_{j=1}^{N-1}\frac{k_{j}(y)}{1+d_{\Omega}(x)k_{j}(y)} dx.
  \end{align*}
  Lastly, using the geodesic normal coordinate formula \eqref{eq:distributional-laplacian-cut-locus}, we have
  \begin{align*}
    &\gen{\Delta_{\Sigma} d_{\Omega},V \circ d_{\Omega}\left|\frac{\varphi' \circ d_{\Omega}}{\varphi \circ d_{\Omega}}\right|^{p-2}\frac{\varphi' \circ d_{\Omega}}{\varphi \circ d_{\Omega}} |u|^{p}} =\\
    &-\int\limits_{\p\Omega} V(c(y))\left|\frac{\varphi'(c(y))}{\varphi(c(y))}\right|^{p-2}\frac{\varphi'(c(y))}{\varphi(c(y))} |u(c(y),y)|^{p}   \theta(c(y),y) d\mcH^{N-1}(y).
  \end{align*}
  The desired identities follow.
\end{proof}

\begin{remark}
  It is possible to weaken the $C^{2,1}$-smoothness condition on $\Omega$ provided that we assume either the cut locus has zero Lebesgue measure or $u \in C_{0}^{\oo}(G)$ (in which case, $\Delta d_{\Omega}|_{C_{0}^{\oo}(G)} = \Delta_{G} d_{\Omega}$).
\end{remark}

We now state applications of Theorem \ref{thm:bessel-pair-hardy-identity-general-domain} by choosing specific $p$-Bessel pairs.
For sake of refraining from repetition, the following results will be stated without the analogous refined inequalities with $|\grad{u}|^{p}$ replaced by $|\grad{d_{\Omega}}\cdot\grad{u}|^{p}$.
We will also continue to use the distributional pairing notation (namely $\gen{\Delta d_{\Omega}, \cdot }$) with the understanding that, if $\Omega$ is $C^{2,1}$, then one could write $\Delta d_{\Omega}$ explicitly as done in Corollary \ref{cor:bessel-pair-hardy-identity-weakly-mean-convex-domains}.
The first identity pertains to the classical weighted Hardy inequalities whose weights are of the form $d_{\Omega}(x)^{\b}$ for some $\b\in\R$.

\begin{corollary}
  \label{cor:weighted-hardy-identity-for-domains}
  Suppose $\Omega\subset\R^{N}$, $N\geq2$, is a domain satisfying the hypotheses of Theorem \ref{thm:bessel-pair-hardy-identity-general-domain}.
  Let $1 < p<\oo$ and $\la\in\R$ satisfy $p+\la-1\neq0$.
  Then, for $u\in{C_{0}^{\oo}(\Omega)}$, there holds
  \begin{align*}
    \int\limits_{\Omega}\frac{|\grad{u(x)}|^{p}}{d_{\Omega}(x)^{\la}}dx&-\bigg\vert\frac{p+\la-1}{p}\bigg\vert^{p}\int\limits_{\Omega}\frac{|u(x)|^{p}}{d_{\Omega}(x)^{\la+p}}dx\\
    &=\int\limits_{\Omega}\frac{1}{d_{\Omega}(x)^{\la}}C_{p}\left( \grad{u(x)},d_{\Omega}(x)^{\frac{p+\la}{p}}\grad\left( d_{\Omega}(x)^{-\frac{\la+p}{p}}u(x) \right) \right)dx\\
    &- \frac{p+\la-1}{p}\bigg\vert\frac{p+\la-1}{p}\bigg\vert^{p-2}\gen{\Delta d_{\Omega} , |u|^{p}d_{\Omega}^{1-p-\la}}.
  \end{align*}
  Moreover, when $- (p+\la-1)\Delta d_{\Omega} \geq0$, then this identity is an improvement of
  \[
    \int\limits_{\Omega}\frac{|\grad{u(x)}|^{p}}{d_{\Omega}(x)^{\la}}dx \geq \bigg\vert\frac{p+\la-1}{p}\bigg\vert^{p}\int\limits_{\Omega}\frac{|u(x)|^{p}}{d_{\Omega}(x)^{\la+p}}.
  \]
\end{corollary}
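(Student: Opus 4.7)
\textbf{Proof plan for Corollary \ref{cor:weighted-hardy-identity-for-domains}.}
The approach is a direct application of Theorem \ref{thm:bessel-pair-hardy-identity-general-domain} to the explicit $p$-Bessel pair corresponding to the power weights $V(r) = r^{-\lambda}$ and $W(r) = \bigl|\tfrac{p+\lambda-1}{p}\bigr|^{p} r^{-\lambda-p}$. The first step is to verify, by a direct substitution into the nonlinear ODE \eqref{BesselPair}, that $(V,W)$ is a $p$-Bessel pair on $(0,R)$ for any $R\in(\rho,\infty]$, with positive solution
\[
\varphi(r) = r^{\frac{p+\lambda-1}{p}}.
\]
This is essentially Example \ref{B1} after the reparametrization $\lambda_{\mathrm{Ex}} = 1-\lambda$, and the condition $p+\lambda-1\neq 0$ ensures $\varphi'$ is nonzero and of constant sign on $(0,R)$, so the $p$-Bessel pair convention for $1<p<2$ holds.

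Next, I would plug the specific $\varphi$, $V$, $W$ into the two identities \eqref{eq:main-hardy-identity-one} (and, if desired, \eqref{eq:main-hardy-identity-two}) of Theorem \ref{thm:bessel-pair-hardy-identity-general-domain}. The left-hand side becomes exactly
\[
\int_{\Omega} \frac{|\grad u|^{p}}{d_{\Omega}^{\lambda}}\,dx - \Bigl|\tfrac{p+\lambda-1}{p}\Bigr|^{p}\!\int_{\Omega}\frac{|u|^{p}}{d_{\Omega}^{\lambda+p}}\,dx,
\]
so the only computations needed are in the two remainder terms. For the $C_{p}$ term this is just a substitution of $\varphi(d_{\Omega}) = d_{\Omega}^{(p+\lambda-1)/p}$ together with $V(d_{\Omega})=d_{\Omega}^{-\lambda}$. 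For the distributional term, I would apply Lemma \ref{lem:varphi-stuff-computed} (or compute directly) to obtain
\[
V(d_{\Omega})\,\Bigl|\tfrac{\varphi'(d_{\Omega})}{\varphi(d_{\Omega})}\Bigr|^{p-2}\tfrac{\varphi'(d_{\Omega})}{\varphi(d_{\Omega})} \;=\; \Bigl|\tfrac{p+\lambda-1}{p}\Bigr|^{p-2}\tfrac{p+\lambda-1}{p}\, d_{\Omega}^{\,1-\lambda-p},
\]
which pulls the scalar constant out of the distributional pairing and leaves precisely $\langle \Delta d_{\Omega},\,|u|^{p} d_{\Omega}^{1-p-\lambda}\rangle$, as claimed. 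The identity then follows verbatim from Theorem \ref{thm:bessel-pair-hardy-identity-general-domain}.

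For the concluding inequality, the plan is to invoke Corollary \ref{cor:improvements-corollary}. The sign hypothesis there is $-\varphi'\,\Delta d_{\Omega}\geq 0$ as a distribution, and since $\varphi'(r) = \tfrac{p+\lambda-1}{p}\,r^{-1/p}$ has the same sign as $p+\lambda-1$, this is equivalent to the stated hypothesis $-(p+\lambda-1)\Delta d_{\Omega}\geq 0$; the improved inequality then reads off the identity by discarding the nonnegative $C_{p}$ term and the nonnegative distributional term.

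\textbf{Main obstacle.} There is no real analytic obstacle, since the corollary is a substitution result; the only place where care is required is bookkeeping of signs and absolute values when $p+\lambda-1$ changes sign (which controls the monotonicity of $\varphi$ and hence the direction of the distributional inequality), and confirming that the exponent data in $\varphi$ matches the exponent appearing inside the $C_{p}$-term so that $\varphi(d_{\Omega})\grad(u/\varphi(d_{\Omega}))$ agrees with $d_{\Omega}^{(p+\lambda-1)/p}\grad\bigl(d_{\Omega}^{-(p+\lambda-1)/p}u\bigr)$ in the final expression.
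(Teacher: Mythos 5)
Your proposal is correct and follows essentially the same route as the paper: apply Theorem \ref{thm:bessel-pair-hardy-identity-general-domain} to the pair $(r^{-\la},\vert\tfrac{p+\la-1}{p}\vert^{p}r^{-\la-p})$ from Example \ref{B1} with $\varphi=r^{\frac{p+\la-1}{p}}$ (using Lemma \ref{lem:varphi-stuff-computed} for the distributional coefficient), and then invoke Corollary \ref{cor:improvements-corollary} for the improvement statement. The only slip is cosmetic: $\varphi'(r)=\tfrac{p+\la-1}{p}\,r^{\frac{\la-1}{p}}$ rather than $\tfrac{p+\la-1}{p}\,r^{-1/p}$, which does not affect the sign argument you need.
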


\begin{proof}[Proof of Corollary \ref{cor:weighted-hardy-identity-for-domains}]
  This identity follows from applying Theorem \ref{thm:bessel-pair-hardy-identity-general-domain} to the $p$-Bessel pair
  \[
    (V,W)=\left( r^{-\la},\bigg\vert\frac{p+\la-1}{p}\bigg\vert^{p}r^{-\la-p} \right)
  \]
  from Example \ref{B1}, which has as a positive solution the function $\varphi=r^{\frac{p+\la-1}{p}}$ on the infinite interval $(0,\oo)$, i.e., with $R=\oo$.
  The statement about improvements follows from Corollary \ref{cor:improvements-corollary}.
\end{proof}

\begin{remark}
  In case $\Omega$ is additionally weakly mean convex or convex and $C^{2}$ in a neighborhood of point on the boundary $\p \Omega$, and if $\la=0$, then these identities improve the sharp Hardy inequalities for weakly mean convex domains as discussed in the introduction.
\end{remark}

Next, we obtain Hardy identities in the spirit of Theorem \ref{thm:brezis-marcus-lewis-li-li}.
\begin{corollary}
  \label{cor:avk-wirths-improvmenet}
  Suppose $\Omega\subsetneq\R^{N}$, $N\geq2$, has finite inradius $R$.
  Then, for $u\in{C_{0}^{\oo}(\Omega)}$, there holds
  \begin{equation}
    \label{eq:brezis-marcus-type-result}
    \begin{aligned}
      \int\limits_{\Omega}\frac{|\grad{u(x)}|^{2}}{d_{\Omega}(x)^{\la}}dx&-\bigg\vert\frac{1+\la}{2}\bigg\vert^{2}\int\limits_{\Omega}\frac{|u(x)|^{2}}{d_{\Omega}(x)^{\la+2}}dx-\frac{\la_{0}^{2}}{R^{2}}\int\limits_{\Omega}\frac{|u(x)|^{2}}{d_{\Omega}(x)^{\la}}dx\\
      &=\int\limits_{\Omega}d_{\Omega}(x)|J_{0}(\tfrac{\la_{0}}{R}d_{\Omega}(x))|^{2}\bigg\vert\grad\left( \frac{u(x)}{d_{\Omega}(x)^{\frac{1+\la}{2}}J_{0}(\frac{\la_{0}}{R}d_{\Omega}(x))} \right)\bigg\vert^{2}dx\\
      &-\frac{1+\la}{2}\gen{\Delta d_{\Omega}, \frac{|u|^{2}}{d_{\Omega}^{\la+1}}\left[ \frac{1+\la}{2}\frac{1}{d_{\Omega}}+\frac{\la_{0}}{R}\frac{J_{0}'\left( \frac{\la_{0}}{R}d_{\Omega} \right)}{J_{0}\left( \frac{\la_{0}}{R}d_{\Omega} \right)} \right]},
    \end{aligned}
  \end{equation}
  where $\la_{0}$ is the first positive zero of
  \[
    \left( r^{\frac{1+\la}{2}}J_{0}(r) \right)'=\frac{1+\la}{2}r^{\frac{-1+\la}{2}}J_{0}(r)+r^{\frac{1+\la}{2}}J_{0}'(r)=0.
  \]
  Moreover, when $-(1+\la)\Delta d_{\Omega}\geq0$, then this identity is an improvement of
  \[
    \int\limits_{\Omega}\frac{|\grad{u(x)}|^{2}}{d_{\Omega}(x)^{\la}}dx\geq\bigg\vert\frac{1+\la}{2}\bigg\vert^{2}\int\limits_{\Omega}\frac{|u(x)|^{2}}{d_{\Omega}(x)^{\la+2}}dx+\frac{\la_{0}^{2}}{R^{2}}\int\limits_{\Omega}\frac{|u(x)|^{2}}{d_{\Omega}(x)^{\la}}dx.
  \]
\end{corollary}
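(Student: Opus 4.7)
The plan is to invoke Theorem \ref{thm:bessel-pair-hardy-identity-general-domain} with $p=2$ applied to the $2$-Bessel pair provided by Example \ref{B2}, namely
\[
(V,W) = \left(r^{-\lambda},\; \left(\tfrac{\lambda+1}{2}\right)^{2} r^{-\lambda-2} + \tfrac{\lambda_{0}^{2}}{R^{2}} r^{-\lambda}\right),
\]
which admits the positive solution $\varphi(r) = r^{(\lambda+1)/2} J_{0}(\lambda_{0} r/R)$. Because $\lambda_{0}$, identified as the first positive zero of $(r^{(\lambda+1)/2}J_{0}(r))'$, satisfies $\lambda_{0} < z_{0}$ (the first positive zero of $J_{0}$), the solution $\varphi$ stays positive on the larger interval $(0,z_{0}R/\lambda_{0})$, so we may take $R' = z_{0}R/\lambda_{0} > R$ as the domain of the Bessel pair. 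This meets the hypothesis $R' > \rho$ of Theorem \ref{thm:bessel-pair-hardy-identity-general-domain} since $R$ is the inradius.

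First I would read off the left-hand side of the corollary's identity directly: substituting $V$ and $W$ into $\int V(d_{\Omega})|\nabla u|^{2}\,dx - \int W(d_{\Omega})|u|^{2}\,dx$ yields exactly
\[
\int_{\Omega}\frac{|\nabla u|^{2}}{d_{\Omega}^{\lambda}}\,dx - \left(\tfrac{1+\lambda}{2}\right)^{2}\!\!\int_{\Omega}\frac{|u|^{2}}{d_{\Omega}^{\lambda+2}}\,dx - \frac{\lambda_{0}^{2}}{R^{2}}\int_{\Omega}\frac{|u|^{2}}{d_{\Omega}^{\lambda}}\,dx.
\]
Next I compute the first remainder term from \eqref{eq:main-hardy-identity-one}. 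Since $C_{2}(x,y)=|y|^{2}$, this remainder reduces to $\int_{\Omega} V(d_{\Omega})\varphi(d_{\Omega})^{2}|\nabla(u/\varphi(d_{\Omega}))|^{2}\,dx$, and the weight collapses beautifully: $V(r)\varphi(r)^{2} = r^{-\lambda}\cdot r^{1+\lambda}J_{0}(\lambda_{0}r/R)^{2} = rJ_{0}(\lambda_{0}r/R)^{2}$, which is precisely the squared-gradient term in \eqref{eq:brezis-marcus-type-result}.

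For the distributional term, a brief logarithmic differentiation yields
\[
\frac{\varphi'(r)}{\varphi(r)} = \frac{1+\lambda}{2r} + \frac{\lambda_{0}}{R}\frac{J_{0}'(\lambda_{0}r/R)}{J_{0}(\lambda_{0}r/R)},
\]
so that the pairing $-\langle \Delta d_{\Omega}, V(d_{\Omega})(\varphi'/\varphi)(d_{\Omega})|u|^{2}\rangle$ from Theorem \ref{thm:bessel-pair-hardy-identity-general-domain} reorganizes into the bracketed form displayed in \eqref{eq:brezis-marcus-type-result}. Finally, the improvement claim follows immediately from Corollary \ref{cor:improvements-corollary}: by construction of $\lambda_{0}$ as the first positive zero of $(r^{(1+\lambda)/2}J_{0}(r))'$, the derivative $\varphi'$ has the sign of $1+\lambda$ throughout $(0,R)$, so the hypothesis $-(1+\lambda)\Delta d_{\Omega}\geq 0$ is exactly $-\varphi'\Delta d_{\Omega}\geq 0$, converting the identity into the claimed inequality.

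The only step requiring care is verifying that the chosen $\lambda_{0}$ is indeed the largest constant for which $\varphi$ is monotone on $(0,R)$; this was recorded in Example \ref{B2} for $\lambda=0$ and extends to general $\lambda$ with $1+\lambda\neq 0$ by the same analysis of the zeros of $(r^{(1+\lambda)/2}J_{0}(r))'$. I do not anticipate any additional obstacle, since the heavy lifting has been done in Theorem \ref{thm:bessel-pair-hardy-identity-general-domain} and Corollary \ref{cor:improvements-corollary}; the entire argument is a specialization of those results to a well-chosen Bessel pair.
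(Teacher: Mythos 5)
Your proposal is correct and takes essentially the same route as the paper: specialize Theorem \ref{thm:bessel-pair-hardy-identity-general-domain} with $p=2$ to the Bessel pair of Example \ref{B2}, read off the weights and the $C_{2}$ remainder, and invoke Corollary \ref{cor:improvements-corollary} for the improvement statement. Your added care in extending the pair to $(0,z_{0}R/\lambda_{0})$ so that the hypothesis $R'>\rho$ is met, and in noting that $\varphi'$ carries the sign of $1+\lambda$ on $(0,R)$, only tightens details the paper's proof leaves implicit.
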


\begin{proof}
  This identity follows from applying Theorem \ref{thm:bessel-pair-hardy-identity-general-domain} to the $p$-Bessel pair
  \[
    (V,W)=\left( r^{-\la},\left( \frac{1+\la}{2} \right)^{2}r^{-\la-2}+\frac{\la_{0}^{2}}{R^{2}}r^{-\la} \right)
  \]
  from Example \ref{B2}, which has as a positive solution the function $\varphi=r^{\frac{1+\la}{2}}J_{0}\left( \frac{\la_{0}}{R}r \right)$ on the interval $(0,R)$.
  Concerning the statement about improvements, we have by elementary calculus arguments that there holds
  \[
    \frac{\varphi'(r)}{\varphi(r)}= \frac{1+\la}{2}\frac{1}{r}+\frac{\la_{0}}{R}\frac{J_{0}'\left( \frac{\la_{0}}{R}r \right)}{J_{0}\left( \frac{\la_{0}}{R}r \right)}\geq0
  \]
  for $0\leq r \leq R$.
  Thus the last statement follows from Corollary \ref{cor:improvements-corollary}.
\end{proof}

\begin{remark}
  Note that, if $\la=0$, then Corollary \ref{cor:avk-wirths-improvmenet} improves Theorem \ref{thm:avk-wirths-theorem} and extends it to the case $\Omega$ is a weakly mean convex domain.
  We also mention that, if $c(y) = R$ for almost every $y \in \p\Omega$, then
  \[
    \frac{1+\la}{2}\int\limits_{\p\Omega}\frac{|u(x)|^{2}}{c(y)^{\la+1}} \left[ \frac{1+\la}{2}\frac{1}{c(y)}+\frac{\la_{0}}{R}\frac{J_{0}'\left( \frac{\la_{0}}{R}c(y) \right)}{J_{0}\left( \frac{\la_{0}}{R}c(y) \right)} \right] \theta(c(y),y) d\mathcal{H}^{N-1}(y) =0,
  \]
  and this follows by definition of $\la_{0}$.
  The geometry of the domain thus dictates the appearance of the boundary integral.
\end{remark}

\subsection{Mean Distance Function Results}
\label{section:mean-distance-function-results}

We now state and prove the main results concerning the mean distance function.
The first result concerns Hardy identities and resulting inequalities with respect to the spherical mean weights $\tilde{V}_{M,p}$, $V_{M,p}$ and $W_{M,p}$ (see \eqref{eq:spherical-means-of-bessel-pairs} for their definitions).
This result identifies a new class of weights for which a Hardy-type inequality holds.
In order to set up these results we need to introduce two geometric concepts.
We are unaware if these concepts (namely, the linear essential diameter and $\nu$-skeleton defined below) have been recorded in the literature.

We begin by introducing what we will call the (lineal) essential diameter.
First fix a domain $\Omega \subsetneq \R^{N}$.
Thus, for each $\nu\in{S^{N-1}}$ (identified with the corresponding direction vector in $\R^{N}$), let $\p_{\nu}$ be the directional derivative in the direction $\nu$ and fix coordinates $x=(x',t)$, where $t = \nu \cdot x\in\R$ is the $\nu$-coordinate of $x$ and $(x',0)\in \nu^{\perp}$, the orthogonal complement of $\nu$.
We will identify the orthogonal complement $\nu^{\perp}$ with $\R^{N-1}$ by identifying the point $(x',0) \in \R^{N}$ with $x' \in \R^{N-1}$.
We may then identify each tuple $(x',\nu) \in \R^{N-1} \times S^{N-1}$ with the line parallel to $\nu$ and which passes through the point in $\R^{N}$ with coordinates $(x',0)$ as defined above.
(Of course $(x',\nu)$ and $(x',-\nu)$ are identified with the same line.)

Thus, let $\ell_{x',\nu}=\left\{ (x',0) + s\nu: s\in\R \right\}$ denote the line passing through $(x',0)$ in the direction $\nu$ and let $[x',\nu]$ denote the disjoint (possibly infinite) family of connected components of $\ell_{x',\nu} \cap \Omega$; i.e., $[x',\nu]$ is a countable family of line segments open in $\ell_{x',\nu}$.
Now, the proof of the first result relies on applicability of one-dimensional Hardy identities along the intersection of $\Omega$ with almost every line (here, almost everywhere is with respect to the product measure on $\R^{N-1}\times S^{N-1}$).
Suppose $(V,W)$ is a $p$-Bessel pair on the interval $(0,R)$ and suppose we wish to apply the one-dimensional Hardy identity of Theorem \ref{thm:one-dimensional-hardy-identities} with respect to $(V,W)$ along any element of $[x',\nu]$ for almost every $(x',\nu)\in\R^{N-1}\times S^{N-1}$ for which $[x',\nu]\neq\emptyset$.
Then, in order to do this, it is necessary that $2R$ is at least the length of any element of $[x',\nu]$ for all such $x'\in\R^{N-1}$ and $\nu \in S^{N-1}$.
If $\Omega$ has finite diameter, then one may simply take $2R$ to be larger than the diameter of $\Omega$; however, a larger $R$ may result in a sub-optimal inequality and so it is desirable to determine the smallest possible $R$.
We therefore introduce the following notion: let $D_{x',\nu} \in [0,\oo]$ denote the supremum of the lengths of the line segments in $[x',\nu]$ and define the (lineal) \textit{essential diameter} of $\Omega$ to be
\begin{equation}
  D_{\oo}(\Omega)=\esssup_{\nu\in{S^{N-1}}}\esssup_{x' \in \nu^{\perp}}D_{x',\nu};
  \label{eq:essential-diameter-definition}
\end{equation}
i.e., $D_{\oo}(\Omega)$ is the essential supremum of lengths of all line segments which are contained in $\Omega$.

Note that, unlike the inradius, $D_{\oo}(\Omega)$ is not sensitive to removing small sets from $\Omega$.
Moreover $D_{\oo}(\Omega)$ is more concrete and tangible than the quasi-inradius \eqref{eq:quasi-inradius} of Davies.
We also mention that, as the proof of Theorem \ref{thm:mean-bessel-pair-hardy-identities} will demonstrate, the inradius may be too small to apply the $p$-Bessel pair equation \eqref{eq:p-bessel-pair-equation} along almost every line contained in $\Omega$.
This is apparently the case when $\Omega$ is, say, a non-equilateral rectangle in $\R^{2}$.
It is easy to see that, if $\Omega \subset \R^{N}$ is a domain with inradius $\d_{0}(\Omega)$ and essential diameter $D_{\oo}(\Omega)$, then
\[
  2\d_{0}(\Omega)\leq{D_{\oo}(\Omega)}\leq\diam(\Omega).
\]

%

The following examples explicitly demonstrate that $D_{\oo}(\Omega)$ sits somewhere between twice the inradius and diameter of $\Omega$.

\begin{example}
  \noindent
  \begin{enumerate}
    \item For $\Omega=B(0,r)\setminus\left\{ 0 \right\}$, the inradius is $r$, and $D_{\oo}(\Omega)=2r$.
    \item Let $A = B(0,2) \setminus \overline{B(0,1)}$ be an annulus in $\R^{2}$.
      It is clear that $D_{\oo}(A) < \diam A$.
    \item The infinite strip $\left\{ (x,y) \in \R^{2} : -1<y<1 \right\}$ has finite inradius but $D_{\oo}(\Omega)=\diam \Omega = \oo$.
  \end{enumerate}
\end{example}

Now, given $(x',\nu) \in \R^{N-1} \times S^{N-1}$ and $\ell \in [x',\nu]$, let $m_{\ell}$ denote the midpoint of $\ell$ whenever $\ell$ is bounded; for $\ell$ unbounded we write $m_{\ell} = \oo$ and understand that $f(\oo) = 0$ for any compactly supported function $f:\R^{N} \to \R$.
We define the $\nu$-skeleton $\Sigma_{\nu}$ of a domain $\Omega$ to be the intersection
\[
  \Sigma_{\nu} = \overline{\left\{ m_{\ell} : \ell \in [x',\nu], x' \in \nu^{\perp} \right\}}\cap\Omega
\]
in $\Omega$ and note that $\Sigma_{\nu}$ may be empty for certain $\nu \in S^{N-1}$.
We mention a few examples.

\begin{example}\label{ex:nu-skeleton}
  \noindent
  \begin{enumerate}
    \item For a two-dimensional disk, $\Sigma_{\nu}$ is the diameter orthogonal to $\nu$.
    \item For a two-dimensional annulus, $\Sigma_{\nu}$ is the disjoint union of two lines and two circular arcs.
    \item If $\Omega$ is convex, then $\Sigma_{\nu}$ is the graph of a function defined over a subset of $\nu^{\perp}$.
    \item The infinite strip $\left\{ (x,y) \in \R^{2} : -1<y<1 \right\}$ is such that $\Sigma_{(1,0)} = \emptyset$ and $\Sigma_{(0,1)}$ is the line $y=0$.
  \end{enumerate}
\end{example}


%

Before we state and prove the first main result in this section, we need to establish the following technical lemma.

\begin{lemma}
  For each $\nu \in S^{N-1}$, there exists a Radon measure $\mu_{\nu}$ on $\Sigma_{\nu}$ such that
  \[
    \int\limits_{\nu^{\perp}} \sum_{\ell \in [x',\nu]} f(m_{\ell}) dx' = \int\limits_{\Sigma_{\nu}} f(\sigma) d\mu_{\nu}(\sigma),\quad f \in C_{0}(\Omega),
  \]
  where $C_{0}(\Omega)$ denotes the continuous functions with compact support in $\Omega$.
  \label{lem:technical-lemma-one}
\end{lemma}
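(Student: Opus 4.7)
The plan is to define a positive Radon functional on $C_{0}(\Omega)$ by the left-hand side of the desired identity and then invoke the Riesz--Markov--Kakutani representation theorem, verifying at the end that the resulting measure is supported on $\Sigma_{\nu}$.

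More precisely, fix $\nu \in S^{N-1}$ and consider the candidate functional
\[
  L(f) = \int_{\nu^{\perp}} \sum_{\ell \in [x',\nu]} f(m_{\ell})\, dx', \qquad f \in C_{0}(\Omega).
\]
The first and main step is to show that $L$ is well-defined and that, for each compact set $K \subset \Omega$, there is a constant $C_{K}$ such that $|L(f)| \leq C_{K}\|f\|_{\infty}$ whenever $\operatorname{supp} f \subset K$. The key geometric observation is the following: if $m_{\ell} \in K$ for some $\ell \in [x',\nu]$, then the length of $\ell$ is at least $2\,d_{K}$, where $d_{K} = \operatorname{dist}(K, \partial\Omega) > 0$. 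Indeed, since $m_{\ell}$ is the midpoint of $\ell$ and $\ell \subset \Omega$ with endpoints on $\partial\Omega$, each half of $\ell$ has length at least $\operatorname{dist}(m_{\ell}, \partial\Omega) \geq d_{K}$. Consequently, for a fixed $x'$, the intervals $\ell \in [x',\nu]$ whose midpoints land in $K$ are pairwise disjoint open intervals each of length at least $2d_{K}$, with centers confined to the compact set $K \cap \ell_{x',\nu}$. Their number is therefore bounded by a constant $N(K)$ depending only on the diameter of $K$ in the direction $\nu$ and on $d_{K}$. Combining this with the fact that such $x'$ must lie in the compact projection $\pi_{\nu}(K) \subset \nu^{\perp}$ of finite $(N-1)$-Lebesgue measure yields
\[
  \int_{\nu^{\perp}} \#\{\ell \in [x',\nu] : m_{\ell} \in K\}\, dx' \leq N(K)\,|\pi_{\nu}(K)|_{\nu^{\perp}} < \infty,
\]
establishing both the finiteness of the sum (in fact finitely many nonzero terms for each $x'$) and the required bound on $L$.

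Next, I would verify the measurability of $x' \mapsto \sum_{\ell \in [x',\nu]} f(m_{\ell})$, which follows from approximating $f$ by indicator functions of open boxes and using that the connected components of $\ell_{x',\nu} \cap \Omega$ depend measurably on $x'$ (this can be carried out by a countable exhaustion of $\Omega$ by open boxes). Linearity and positivity of $L$ are immediate from its definition. The Riesz--Markov--Kakutani theorem then provides a unique Radon measure $\tilde{\mu}_{\nu}$ on $\Omega$ such that $L(f) = \int_{\Omega} f\, d\tilde{\mu}_{\nu}$ for every $f \in C_{0}(\Omega)$.

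Finally, one must show that $\tilde{\mu}_{\nu}$ is concentrated on $\Sigma_{\nu}$. For this, take any $f \in C_{0}(\Omega \setminus \Sigma_{\nu})$. By definition of $\Sigma_{\nu}$, every midpoint $m_{\ell}$ lies in $\Sigma_{\nu}$, so $f(m_{\ell}) = 0$, giving $L(f) = 0$. Hence $\tilde{\mu}_{\nu}(\Omega \setminus \Sigma_{\nu}) = 0$, and one defines $\mu_{\nu}$ as the restriction of $\tilde{\mu}_{\nu}$ to $\Sigma_{\nu}$. The main obstacle is the uniform bound on the number of qualifying midpoints on each line, but this is handled cleanly by the distance-to-boundary estimate described above; the measurability check is the secondary technicality.
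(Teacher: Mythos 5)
Your proposal follows the same architecture as the paper's proof: realize the left-hand side as a positive linear functional and invoke the Riesz--Markov--Kakutani theorem, after establishing a bound for functions supported in a fixed compact $K\subset\Omega$. Your boundedness argument, however, is genuinely different from the paper's and arguably cleaner: the paper argues by contradiction, extracting from an unbounded count a sequence of segments meeting $K$ whose lengths tend to zero, so that their endpoints (which lie on $\p\Omega$) converge to a point of $K\subset\Omega$; you instead observe directly that a bounded component whose midpoint lies in $K$ has length at least $2\dist(K,\p\Omega)$, hence on each line the admissible midpoints are $2d_{K}$-separated and confined to a segment of length $\diam(K)$, and the relevant $x'$ lie in the compact projection $\pi_{\nu}(K)$. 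This gives the explicit bound $|L(f)|\leq N(K)\,|\pi_{\nu}(K)|\,\|f\|_{\infty}$, which is a quantitative sharpening of what the paper obtains. Your localization step (apply the representation theorem on $C_{0}(\Omega)$ and note that $L$ annihilates $C_{0}(\Omega\setminus\Sigma_{\nu})$, then restrict to the set $\Sigma_{\nu}$, which is closed in $\Omega$) is also fine; the paper instead applies the theorem directly on $C_{0}(\Sigma_{\nu})$.

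The genuine gap is the measurability of $x'\mapsto S_{f}(x')=\sum_{\ell\in[x',\nu]}f(m_{\ell})$, which is the one real technical point of this lemma and which you dismiss in a sentence. The assertion that ``the connected components of $\ell_{x',\nu}\cap\Omega$ depend measurably on $x'$'' is precisely what requires proof: as $x'$ varies, components merge, split, and their midpoints jump discontinuously, so $S_{f}$ is not continuous, and reducing to indicators of boxes only trades the problem for the measurability of the counting function $x'\mapsto\#\{\ell:m_{\ell}\in B\}$, which is no more evident. This is where the paper spends most of its proof (a Tietze extension $\tilde f$, the thickened set $\Sigma_{\nu}^{\e}$, the Fubini-measurable averages $F_{\e}$, and an identification of $S_{f}$ as a pointwise limit of $F_{\e}$ using the separation of the midpoints), and even there the limiting identification is delicate because the $\e$-neighborhood of $\Sigma_{\nu}$ intersected with a line need not consist only of the $\e$-intervals about that line's own midpoints. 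One concrete way to complete your route: show that the upper and lower endpoints $\b(x',t)$ and $\a(x',t)$ of the component of $\ell_{x',\nu}\cap\Omega$ through $(x',t)$ are respectively lower and upper semicontinuous on $\Omega$, so the midpoint map is Borel, and then write $S_{f}(x')=\int_{\R}\mathbf{1}_{\Omega}(x',t)\,f\big(x',\tfrac{\a(x',t)+\b(x',t)}{2}\big)\,\big(\b(x',t)-\a(x',t)\big)^{-1}dt$ (unbounded components contributing $0$), so that joint measurability and Tonelli give measurability in $x'$. Some argument of this kind must be supplied; as written, your proof asserts rather than proves the step on which the lemma actually turns.
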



\begin{proof}
  Let $f \in C_{0}(\Sigma_{\nu})$ and define $S_{f}(x') = \sum_{\ell \in [x',\nu]} f(m_{\ell})$.
  We first show $S_{f}$ is a measurable function on $\nu^{\perp}$; we achieve this by approximating $S_{f}$ by measurable functions.
  So, let $\tilde{f} \in C_{0}(\R^{n})$ be a Tietze extension of $f$.
  Given $z \in \R^{n}$ introduce the coordinates $(x',t)$ where $x'$ is the projection of $z$ to $\nu^{\perp}$ and $t = \gen{z,\nu}$; thus $\R^{n}$ is identified with the product space $\nu^{\perp} \times \R$.
  Let $\Sigma_{\nu}^{\e} = \Sigma_{\nu} + B(0,\e)$ be the $\e$-extension of $\Sigma_{\nu}$.
  Since $\tilde{f}\chi_{\Sigma_{\nu}^{\e}}$ is measurable on $\R^{n}$, we have by Fubini's theorem that $F_{\e}(x') := \frac{1}{2\e}\int_{\R} \tilde{f}(x',t)\chi_{\Sigma_{\nu}^{\e}}(x',t) dt$ defines a family of measurable functions on $\nu^{\perp}$.
  Let $K \subset \Sigma_{\nu}$ be the compact support of $f$.
  Fix $x' \in \nu^{\perp}$ and note that we may choose $\e>0$ small enough so that any two lines $\ell_{1},\ell_{2} \in [x',\nu]_{K}$ satisfy $d(m_{\ell_{1}},m_{\ell_{2}}) > 5 \e$.
  For $\ell \in [x',\nu]_{K}$, let $M_{\ell} = \gen{\ell,\nu}$.
  Thus each $m_{\ell}$ corresponds to the point $(x',M_{\ell}) \in \nu^{\perp} \times \R$.
  Therefore, for $\e$ small, there holds
  \[
    F_{\e}(x') = \frac{1}{2\e} \sum_{\ell \in [x',\nu]_{K}} \int_{M_{\ell} - \e }^{M_{\ell} + \e } \tilde{f}(x',t) dt.
  \]
  Lebesgue differentiation for continuous functions provides
  \[
    \lim_{\e \to 0} \frac{1}{2\e} \int_{M_{\ell} - \e}^{M_{\ell} + \e} \tilde{f}(x',t) dt = f(m_{\ell}).
  \]
  As a consequence, we get
  \[
    S_{f}(x') = \lim_{\e \to 0} F_{\e}(x'),
  \]
  showing that $S_{f}$ is a measurable function on $\nu^{\perp}$.

  Define the linear functional $\psi: C_{0}(\Sigma_{\nu}) \to \R$ by
  \[
    \psi(g) = \int_{\nu^{\perp}} \sum_{\ell \in [x',\nu]} g(m_{\ell})dx',\quad g \in C_{0}(\Sigma_{\nu}).
  \]
  Let $K \subset \Sigma_{\nu}$ be the compact support of $g$, noting that $K$ is compact in $\Omega$.
  Next let $[x',\nu]_{K}$ be those lines in $[x',\nu]$ which intersect $K$.
  We show that, as a function of $x' \in \R^{N-1}$, the cardinality function $N(x')=\# [x',\nu]_{K}$ is bounded.
  If this were not the case, then there would be a sequence of points $x_{k}' \in \R^{N-1}$ such that $N(x_{k}') \to \oo$.
  After relabeling the sequence $(x_{k})$, there then exists a sequence $(\ell_{k})$ of lines with $\ell_{k} \in [x_{k}',\nu]_{K}$ and such that $(m_{\ell_{k}})$ converges to a point $m_{*} \in K$ (by compactness) and such that the lengths of the $\ell_{k}$ tend to zero (by the boundedness of $K$).
  This implies that the endpoints of $\ell_{k}$ (which belong to $\p\Omega$) converge to $m_{*}$, contradicting $m_{*} \in \Omega$.
  Therefore $|\psi(g)| \lesssim \norm{g}_{\oo}<\oo$ and so $\psi$ is well-defined.

  Now, observing that $\psi\geq0$, we conclude $\psi$ is a positive linear functional on $C_{0}(\Sigma_{\nu})$ and so we may apply the Riesz-Markov-Kakutani representation theorem to conclude the existence of a unique Radon measure $\mu_{\nu}$ on $\Sigma_{\nu}$ such that $\psi(g) = \int_{\Sigma_{\nu}}g(m) d\mu_{\nu}(m)$.
  Since each $f \in C_{0}(\Omega)$ defines a unique function $f|_{\Sigma_{\nu}} \in C_{0}(\Sigma_{\nu})$, we conclude
  \[
    \int\limits_{\nu^{\perp}} \sum_{\ell \in [x',\nu]} f(m_{\ell}) dx' = \psi(f|_{\Sigma_{\nu}}) = \int_{\Sigma_{\nu}} f|_{\Sigma_{\nu}}(m) d\mu_{\nu}(m),
  \]
  as desired.

\end{proof}


To find the Hardy identities for the mean distance functions we will need to introduce the following means, which we will call the spherical skeletal mean of a function $u \in C_{0}^{\oo}(\Omega)$:
\begin{equation}
  \mathcal{S}_{\Omega}[u] = \int\limits_{S^{N-1}} \int\limits_{\Sigma_{\nu}} u(m) d \mu_{\nu}(m) d\sigma(\nu).
  \label{eq:spherical-skeletal-mean}
\end{equation}

We are now prepared to state and prove the first main result in this section.

\begin{theorem}
  \label{thm:mean-bessel-pair-hardy-identities}
  Let $N\geq2$, let $1 < p < \oo$, let $\Omega\subsetneq\R^{N}$ be any domain, and let $(V,W)$ be a positive pair of $C^{1}$ functions.
  If $(V,W)$ is a $p$-Bessel pair on $(0,R)$ for some $R>\frac{1}{2}D_{\oo}(\Omega)$ or $R=\oo$ if $D_{\oo}(\Omega)=\oo$, then, for all $u\in{C_{0}^{\oo}(\Omega)}$, there holds
  \begin{align*}
    \int\limits_{\Omega}& \tilde{V}_{M,p}(x) |\grad u(x)|^{p}dx  = \int\limits_{\Omega} W_{M,p}(x) |u(x)|^{p} dx +2\mathcal{S}_{\Omega}\left[ V \circ \rho_{\nu} \left| \frac{\varphi' \circ \rho_{\nu}}{\varphi \circ \rho_{\nu}} \right|^{p-2}\frac{\varphi' \circ \rho_{\nu}}{\varphi \circ \rho_{\nu}} |u|^{p} \right]\\
    &+ \int\limits_{S^{N-1}}\int\limits_{\Omega} V(\rho_{\nu}(x)) C_{p}\left( \p_{\nu}u(x), \varphi(\rho_{\nu}(x)) \p_{\nu} \left( \frac{u(x)}{\varphi(\rho_{\nu}(x))} \right) \right) dx d\sigma(\nu).
  \end{align*}
  and, using $V_{M,p}\geq\tilde{V}_{M,p}$, there holds
  \begin{align*}
    \int\limits_{\Omega}& V_{M,p}(x) |\grad u(x)|^{p}dx  \geq \int\limits_{\Omega} W_{M,p}(x) |u(x)|^{p} dx +2\mathcal{S}_{\Omega}\left[ V \circ \rho_{\nu} \left| \frac{\varphi' \circ \rho_{\nu}}{\varphi \circ \rho_{\nu}} \right|^{p-2}\frac{\varphi' \circ \rho_{\nu}}{\varphi \circ \rho_{\nu}} |u|^{p} \right]\\
    &+ \int\limits_{S^{N-1}}\int\limits_{\Omega} V(\rho_{\nu}(x)) C_{p}\left( \p_{\nu}u(x), \varphi(\rho_{\nu}(x)) \p_{\nu} \left( \frac{u(x)}{\varphi(\rho_{\nu}(x))} \right) \right) dx d\sigma(\nu).
  \end{align*}
\end{theorem}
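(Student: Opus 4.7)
The plan is to reduce the multidimensional identity to Theorem \ref{thm:one-dimensional-hardy-identities} applied along each line in a fixed direction $\nu\in S^{N-1}$, and then average the resulting family of one-dimensional identities over $\nu$. Fix $\nu\in S^{N-1}$ and use Fubini in coordinates $(x',t)\in\nu^{\perp}\times\R$ with $t=x\cdot\nu$. For a.e.\ $x'\in\nu^{\perp}$ the slice $\ell_{x',\nu}\cap\Omega$ is a countable disjoint union of open segments $\ell=(a_\ell,b_\ell)$; the hypothesis $R>D_\infty(\Omega)/2$ (or $R=\infty$ when $D_\infty(\Omega)=\infty$) guarantees $b_\ell-a_\ell<2R$ for a.e.\ such $\ell$. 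Along each segment $u|_\ell\in C_0^\infty(a_\ell,b_\ell)$, $\partial_\nu u = u'(t)$, and the one-dimensional boundary distance $\min\{t-a_\ell,b_\ell-t\}$ coincides with $\rho_\nu(x)$ under the symmetric convention on $\rho_\nu$.

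Next, I would apply Theorem \ref{thm:one-dimensional-hardy-identities} with the $p$-Bessel pair $(V,W)$ on each segment $\ell$ (invoking its $R=\infty$ variant on unbounded segments, whose midpoint term vanishes by compact support of $u$), sum over $\ell\in[x',\nu]$, and integrate in $x'\in\nu^{\perp}$. Fubini then yields
\begin{align*}
  \int\limits_\Omega V(\rho_\nu)|\partial_\nu u|^p dx - \int\limits_\Omega W(\rho_\nu)|u|^p dx &= \int\limits_\Omega V(\rho_\nu)\, C_p\!\left(\partial_\nu u, \varphi(\rho_\nu)\partial_\nu\!\left(\tfrac{u}{\varphi(\rho_\nu)}\right)\right)dx \\
  &\quad + 2\int\limits_{\nu^\perp}\sum_{\ell\in[x',\nu]}\tfrac{V(\rho_\nu(m_\ell))|\varphi'(\rho_\nu(m_\ell))|^{p-2}\varphi'(\rho_\nu(m_\ell))}{\varphi(\rho_\nu(m_\ell))^{p-1}}|u(m_\ell)|^p dx'.
\end{align*}
By Lemma \ref{lem:technical-lemma-one}, the last iterated sum-integral equals the integral of the same integrand over $\Sigma_\nu$ against $\mu_\nu$.

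I would then integrate the resulting identity over $\nu\in S^{N-1}$. The $W$-term and the midpoint-term reassemble, by their very definitions, into $\int_\Omega W_{M,p}|u|^p dx$ and $2\mathcal{S}_\Omega\bigl[V\circ\rho_\nu\,|\varphi'/\varphi\circ\rho_\nu|^{p-2}(\varphi'/\varphi\circ\rho_\nu)\,|u|^p\bigr]$, respectively. For the gradient term, Fubini gives $\int_\Omega\int_{S^{N-1}} V(\rho_\nu)|\partial_\nu u|^p d\sigma(\nu)\,dx$; writing $|\partial_\nu u|^p = |\nabla u|^p|\cos(\nu,\nabla u)|^p$ identifies this with $\int_\Omega \tilde V_{M,p}|\nabla u|^p dx$, which establishes the identity. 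The inequality version is then immediate from the pointwise bound $\tilde V_{M,p}\leq V_{M,p}$ (a consequence of $|\cos|^p\leq 1$), combined with the identity already proven.

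The main obstacle is the rigorous justification of the Fubini/Tonelli interchange together with the passage from the midpoint sums $\sum_{\ell\in[x',\nu]} f(m_\ell)$ to integrals over $\Sigma_\nu$ against $\mu_\nu$: the measurability, finiteness, and representation issues at stake are precisely those addressed by Lemma \ref{lem:technical-lemma-one}. A secondary but essential point is verifying that $b_\ell - a_\ell < 2R$ holds on a.e.\ slicing segment so that Theorem \ref{thm:one-dimensional-hardy-identities} is directly applicable; this is exactly the content of the hypothesis $R>D_\infty(\Omega)/2$.
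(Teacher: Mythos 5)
Your proposal is correct and follows essentially the same route as the paper: slice $\Omega$ along lines in direction $\nu$, apply the one-dimensional identity of Theorem \ref{thm:one-dimensional-hardy-identities} on each segment (with the $R=\infty$ variant on unbounded ones), convert the midpoint sums to $\mu_{\nu}$-integrals via Lemma \ref{lem:technical-lemma-one}, average over $\nu\in S^{N-1}$ to assemble $W_{M,p}$, $\mathcal{S}_{\Omega}$ and $\tilde{V}_{M,p}$, and deduce the inequality from $\tilde{V}_{M,p}\leq V_{M,p}$. This matches the paper's argument step for step, including the use of the hypothesis $R>\tfrac{1}{2}D_{\oo}(\Omega)$ to license the one-dimensional identity on almost every slicing segment.
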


\begin{proof}
  This proof is inspired by the proof given in \cite[Theorem 3.3.2]{BET}.

  Given $a,b\in\R\cup\left\{ \pm\oo \right\}$ with $a<b$, let  $\rho(t)=\min\left\{ |t-a|,|t-b| \right\}$ be the distance function on $(a,b)$ to the boundary $\left\{ a,b \right\}$, and let $m_{ab}=\frac{a+b}{2}$ be the midpoint of $(a,b)$.
  If $a,b\in\R$, then suppose $\rho(m_{ab})= \frac{b-a}{2} <R$, and observe that $|\rho'|=1$ and $\frac{d^{2}}{dx^{2}}\rho=0$ hold almost everywhere.
  From Theorem \ref{thm:one-dimensional-hardy-identities}, we have that, for the $p$-Bessel pair $(V,W)$, the one-dimensional Hardy identity for $\rho$ is given by
  \begin{equation}
    \begin{aligned}
      \int\limits_{a}^{b}V(\rho(t))|u'(t)|^{p}dt-\int\limits_{a}^{b}W(\rho(t))|u(t)|^{p}dt&=\int\limits_{a}^{b}V(\rho(t))C_{p}\left(u'(t),\varphi(\rho(t))\left( \frac{u(t)}{\varphi(\rho(t))} \right)'\right)\\
      &+2\frac{V(\rho(m_{ab}))|\varphi'(\rho(m_{ab}))|^{p-2}\varphi'(\rho(m_{ab}))}{\varphi(\rho(m_{ab}))^{p-1}}|u(m_{ab})|^{p}.
    \end{aligned}
    \label{eq:hardy-identity-one-dimension}
  \end{equation}
  where $u\in{C_{0}^{\oo}(a,b)}$.

  We note that $\ell_{x',\nu}$ is in general a (possibly infinite) disjoint union of line segments contained in $\Omega$.
  Moreover, using that $(V,W)$ is a $p$-Bessel pair on $(0,R)$ and $R>\frac{1}{2}D_{\oo}(\Omega)$, we have for almost every $x'\in\R^{N-1}$ such that $\ell_{x',\nu}\neq\emptyset$, there holds
  \[
    V'(r)|\varphi'(r)|^{p-1}+V(r)|\varphi'(r)|^{p-2}\varphi''(r)+W(r)\varphi(r)^{p-1}=0
  \]
  for $r=\rho_{\nu}(x',t)$ whenever $(x',t)\in\ell_{x',\nu}$.

  For each $\ell\in[x',\nu]$, let $m_{\ell}$ denote the midpoint of $\ell$.
  Using Fubini's theorem and \eqref{eq:hardy-identity-one-dimension}, we have that
  \begin{align*}
    \int\limits_{\Omega} V\left( \rho_{\nu}(x) \right) |\p_{\nu} u(x)|^{p}dx & = \int\limits_{\nu^{\perp}} \sum_{\ell \in [x',\nu]} \int\limits_{\ell} V(\rho_{\nu}(x',t)) |\p_{\nu} u(x',t)|^{p} dt dx'\\
    &= \int\limits_{\nu^{\perp}} \sum_{\ell \in [x',\nu]} \int\limits_{\ell} W(\rho_{\nu}(x',t)) |u(x',t)|^{p} dt dx' \\
    &+ \int\limits_{\nu^{\perp}} \sum_{\ell \in [x',\nu]} \int\limits_{\ell} V(\rho_{\nu}(x',t))\\
    &\times C_{p}\left( \p_{\nu}u(x',t), \varphi(\rho_{\nu}(x',t)) \p_{\nu} \left( \frac{u(x',t)}{\varphi(\rho_{\nu}(x',t))} \right) \right) dt dx'\\
    &+ 2\int\limits_{\nu^{\perp}} \sum_{\ell \in [x',\nu]} \frac{V(\rho_{\nu}(m_{\ell}))|\varphi'(\rho_{\nu}(m_{\ell}))|^{p-2}\varphi'(\rho_{\nu}(m_{\ell}))}{\varphi(\rho_{\nu}(m_{\ell}))^{p-1}}|u(m_{\ell})|^{p}dx'.
  \end{align*}
  By Lemma \ref{lem:technical-lemma-one}, we have
  \begin{align*}
    \int\limits_{\Omega} V\left( \rho_{\nu}(x) \right) |\p_{\nu} u(x)|^{p}dx &= \int\limits_{\Omega} W(\rho_{\nu}(x)) |u(x)|^{p} dx \\
    &+ \int\limits_{\Omega} V(\rho_{\nu}(x)) C_{p}\left( \p_{\nu}u(x), \varphi(\rho_{\nu}(x)) \p_{\nu} \left( \frac{u(x)}{\varphi(\rho_{\nu}(x))} \right) \right) dx\\
    &+ 2\int\limits_{\Sigma_{\nu}}\frac{V(\rho_{\nu}(m))|\varphi'(\rho_{\nu}(m))|^{p-2}\varphi'(\rho_{\nu}(m))}{\varphi(\rho_{\nu}(m))^{p-1}}|u(m)|^{p} d\mu_{\nu}(m).
  \end{align*}

  Now, integrating over $S^{N-1}$ with respect to $\nu$, and using $\p_{\nu} u = \nu \cdot \grad u = |\grad u | \cos (\nu , \grad u)$, where $(a,b)$ denotes the angle between two vectors $a,b\in\R^{N}$, we obtain
  \begin{align*}
    \int\limits_{S^{N-1}} &\int\limits_{\Omega} V(\rho_{\nu}(x)) | \cos(\nu , \grad u(x))|^{p}dx d\sigma(\nu)  = \int\limits_{\Omega} W_{M,p}(x) |u(x)|^{p} dx \\
    &+ \int\limits_{S^{N-1}}\int\limits_{\Omega} V(\rho_{\nu}(x)) C_{p}\left( \p_{\nu}u(x), \varphi(\rho_{\nu}(x)) \p_{\nu} \left( \frac{u(x)}{\varphi(\rho_{\nu}(x))} \right) \right) dx d\sigma(\nu)\\
    &+2 \mathcal{S}_{\Omega}\left[ \frac{V(\rho_{\nu})|\varphi'(\rho_{\nu})|^{p-2}\varphi'(\rho_{\nu})}{\varphi(\rho_{\nu})^{p-1}}|u|^{p} \right]
  \end{align*}
  Since the integration is being performed over the entire sphere $S^{N-1}$, we may write
  \[
    \int\limits_{S^{N-1}}V(\rho_{\nu}(x))|\cos\left( \nu,\grad{u} \right)|^{p}d\sigma(\nu)=\int\limits_{S^{N-1}}V(\rho_{\nu}(x))|\cos\left( \nu,\vec{e} \right)|^{p}d\sigma(\nu)=\tilde{V}_{M,p}(x),
  \]
  where $\vec{e}$ is any fixed unit vector in $\R^{N}$.

  At last, putting all this together, we obtain
  \begin{align*}
    \int\limits_{\Omega}& \tilde{V}_{M,p}(x) |\grad u(x)|^{p}dx  = \int\limits_{\Omega} W_{M,p}(x) |u(x)|^{p} dx +2 \mathcal{S}_{\Omega}\left[ \frac{V(\rho_{\nu})|\varphi'(\rho_{\nu})|^{p-2}\varphi'(\rho_{\nu})}{\varphi(\rho_{\nu})^{p-1}}|u|^{p} \right]\\
    &+ \int\limits_{S^{N-1}}\int\limits_{\Omega} V(\rho_{\nu}(x)) C_{p}\left( \p_{\nu}u(x), \varphi(\rho_{\nu}(x)) \p_{\nu} \left( \frac{u(x)}{\varphi(\rho_{\nu}(x))} \right) \right) dx d\sigma(\nu).
  \end{align*}
  Similarly, using $|\cos|\leq1$, and hence $\tilde{V}_{M,p} \leq V$, we obtain
  \begin{align*}
    \int\limits_{\Omega}& V_{M,p}(x) |\grad u(x)|^{p}dx  \geq \int\limits_{\Omega} W_{M,p}(x) |u(x)|^{p} dx +2 \mathcal{S}_{\Omega}\left[ \frac{V(\rho_{\nu})|\varphi'(\rho_{\nu})|^{p-2}\varphi'(\rho_{\nu})}{\varphi(\rho_{\nu})^{p-1}}|u|^{p} \right]\\
    &+ \int\limits_{S^{N-1}}\int\limits_{\Omega} V(\rho_{\nu}(x)) C_{p}\left( \p_{\nu}u(x), \varphi(\rho_{\nu}(x)) \p_{\nu} \left( \frac{u(x)}{\varphi(\rho_{\nu}(x))} \right) \right) dx d\sigma(\nu).
  \end{align*}
  This concludes the proof.

\end{proof}

We now give some applications of Theorem \ref{thm:mean-bessel-pair-hardy-identities}.
For sake of notational convenience, we shall write
\[
  \Xi(N,p)=\frac{\sqrt{\pi}\Gamma\left( \frac{N+p}{2} \right)}{\Gamma\left( \frac{p+1}{2} \right)\Gamma\left( \frac{N}{2} \right)},
\]
so that
\[
  d_{\Omega,M,p}(x)=\left( \Xi(N,p)\int\limits_{S^{N-1}}\frac{1}{\rho_{\nu}(x)^{p}}d\sigma(\nu) \right)^{-\frac{1}{p}}.
\]

First, just as the Bessel pair $(V,W)=\left(1,\left( \frac{p-1}{p} \right)^{p}r^{-p}\right)$ may be applied to obtain the classical sharp Hardy inequality \eqref{eq:hardy-inequality-general-domain}, the corresponding spherical means obtain the analogous Hardy inequality with respect to powers of $d_{\Omega,M,p}$.
This is the following corollary.

\begin{corollary}
  \label{cor:mean-distance-hardy-identity}
  For any domain $\Omega\subsetneq\R^{N}$, any $1<p<\oo$, and any $u\in{C_{0}^{\oo}(\Omega)}$, there holds
  \begin{align*}
    \frac{1}{\Xi(N,p)}\int\limits_{\Omega}|\grad{u(x)}|^{p}dx&=\left( \frac{p-1}{p} \right)^{p}\frac{1}{\Xi(N,p)}\int\limits_{\Omega}\frac{|u(x)|^{p}}{d_{\Omega,M,p}^{p}(x)}dx + 2\left( \frac{p-1}{p} \right)^{p-1} \mathcal{S}_{\Omega}\left[ |u|^{p}\rho_{\nu}^{1-p} \right]\\
    &+\int\limits_{\Omega}\int\limits_{S^{N-1}}C_{p}\left( \p_{\nu}u(x),\rho_{\nu}(x)^{\frac{p-1}{p}}\p_{\nu}\left( \frac{u(x)}{\rho_{\nu}(x)^{\frac{p-1}{p}}} \right) \right)d\sigma(\nu)dx,
  \end{align*}
  and, for $\la \in \R$, there holds
  \begin{align*}
    \frac{1}{\Xi(N,\la)}\int\limits_{\Omega}\frac{|\grad{u}|^{p}}{d_{\Omega,M,\la}(x)^{\la}}dx&\geq\bigg\vert\frac{p+\la-1}{p}\bigg\vert^{p}\frac{1}{\Xi(N,\la+p)}\int\limits_{\Omega}\frac{|u|^{p}}{d_{\Omega,M,\la+p}(x)^{\la+p}}dx\\
    &- \frac{p+\la-1}{p}\bigg\vert\frac{p+\la-1}{p}\bigg\vert^{p-2} \mathcal{S}_{\Omega}\left[ |u|^{p}\rho_{\nu}^{1-p-\la} \right]\\
    &+\int\limits_{\Omega}\int\limits_{S^{N-1}}\frac{1}{\rho_{\nu}(x)^{\la}}C_{p}\left( \p_{\nu}u(x),\rho_{\nu}(x)^{\frac{p+\la-1}{p}}\p_{\nu}\left( \frac{u(x)}{\rho_{\nu}(x)^{\frac{p+\la-1}{p}}} \right) \right)d\sigma(\nu)dx.
  \end{align*}
\end{corollary}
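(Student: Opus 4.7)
The plan is to apply Theorem \ref{thm:mean-bessel-pair-hardy-identities} directly with two $p$-Bessel pairs drawn from Example \ref{B1}. For the unweighted identity I would take $(V,W) = (1, ((p-1)/p)^{p}\,r^{-p})$ on $(0,\infty)$ with positive solution $\varphi(r) = r^{(p-1)/p}$, and for the weighted inequality I would take $(V,W) = (r^{-\lambda},\,|(p+\lambda-1)/p|^{p}\,r^{-\lambda-p})$ with $\varphi(r) = r^{(p+\lambda-1)/p}$. In the weighted case I would invoke the comparison $\tilde V_{M,p}\le V_{M,p}$ that Theorem \ref{thm:mean-bessel-pair-hardy-identities} provides, so the resulting statement is an inequality rather than an identity.

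Once these Bessel pairs are substituted into the conclusion of Theorem \ref{thm:mean-bessel-pair-hardy-identities}, what remains is to rewrite each of the spherical means $\tilde V_{M,p}$, $V_{M,p}$, $W_{M,p}$ in terms of the generalized mean-distance functions $d_{\Omega,M,\alpha}$, and to simplify the skeletal weight
\[
  V(\rho_\nu)\Bigl|\tfrac{\varphi'(\rho_\nu)}{\varphi(\rho_\nu)}\Bigr|^{p-2}\tfrac{\varphi'(\rho_\nu)}{\varphi(\rho_\nu)}.
\]
For the weighted means, this is immediate from the defining identity $\int_{S^{N-1}}\rho_\nu^{-\alpha}\,d\sigma(\nu) = d_{\Omega,M,\alpha}^{-\alpha}/\Xi(N,\alpha)$, which handles $W_{M,p}$ (with $\alpha = p$ or $\alpha = \lambda+p$) and $V_{M,p}$ (with $\alpha = \lambda$) in one step. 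The skeletal weight reduces, via the direct computation $\varphi'(r)/\varphi(r) = (p+\lambda-1)/(pr)$, to a multiple of $\rho_\nu^{\,1-p-\lambda}$ with constant $\tfrac{p+\lambda-1}{p}\bigl|\tfrac{p+\lambda-1}{p}\bigr|^{p-2}$ (specializing to $((p-1)/p)^{p-1}\rho_\nu^{1-p}$ when $\lambda = 0$).

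The only nontrivial step, and the main obstacle, is evaluating $\tilde V_{M,p}$ for the unweighted pair, i.e.\ proving
\[
  \int_{S^{N-1}} |\cos(\nu,\vec e)|^{p}\,d\sigma(\nu) \;=\; \frac{1}{\Xi(N,p)}.
\]
I would obtain this by passing to spherical coordinates based at $\vec e$, reducing to the Beta integral
\[
  \int_0^{\pi/2}\cos^{p}\theta\,\sin^{N-2}\theta\,d\theta \;=\; \tfrac{1}{2}B\!\left(\tfrac{p+1}{2},\tfrac{N-1}{2}\right),
\]
and then matching $\Gamma$-factors against the explicit form of $\Xi(N,p)$ using $|S^{N-1}| = 2\pi^{N/2}/\Gamma(N/2)$ and the analogous formula for $|S^{N-2}|$.

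With this Beta identity in hand, every remaining term in the corollary is produced by a direct substitution into Theorem \ref{thm:mean-bessel-pair-hardy-identities}: the $C_{p}$-remainder transfers verbatim with $\varphi(r) = r^{(p-1)/p}$ or $r^{(p+\lambda-1)/p}$ as appropriate, and the skeletal-mean contribution $\mathcal{S}_{\Omega}[\,\cdot\,]$ is then in the form stated. The weighted case moreover requires no analogue of the Beta computation because $V_{M,p}$ is retained (not $\tilde V_{M,p}$), which is exactly the point of using the $\tilde V_{M,p}\le V_{M,p}$ comparison.
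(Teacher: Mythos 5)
Your proposal is correct and follows essentially the same route as the paper: both apply Theorem \ref{thm:mean-bessel-pair-hardy-identities} to the two $p$-Bessel pairs of Example \ref{B1} (namely $(1,(\tfrac{p-1}{p})^{p}r^{-p})$ with $\varphi=r^{\frac{p-1}{p}}$ and $(r^{-\la},|\tfrac{p+\la-1}{p}|^{p}r^{-\la-p})$ with $\varphi=r^{\frac{p+\la-1}{p}}$ on $(0,\oo)$), rewrite $\tilde V_{M,p}$, $V_{M,p}$, $W_{M,p}$ via the mean distance functions, simplify the skeletal weight exactly as in Lemma \ref{lem:varphi-stuff-computed}, and use $\tilde V_{M,p}\le V_{M,p}$ in the weighted case since $\tilde V_{M,p}$ has no closed form there. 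The only difference is that you spell out the Beta-integral computation of $\int_{S^{N-1}}|\cos(\nu,\vec e)|^{p}\,d\sigma(\nu)=1/\Xi(N,p)$, which the paper asserts without proof.
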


\begin{proof}
  This identity follows from applying Theorem \ref{thm:mean-bessel-pair-hardy-identities} to the $p$-Bessel pair
  \[
    (V,W)=\left( 1,\bigg\vert\frac{p-1}{p}\bigg\vert^{p}r^{-p} \right)
  \]
  which has as a positive solution the function $\varphi=r^{\frac{p-1}{p}}$ on the infinite interval $(0,\oo)$, i.e., with $R=\oo$.
  Indeed, we may compute
  \begin{align*}
    \tilde{V}_{M,p}(x)&=\int\limits_{S^{N-1}}|\cos(\nu\cdot\vec{e})|^{p}d\sigma(\nu)=\frac{1}{\Xi(N,p)}\\
    V_{M,p}(x)&=\int\limits_{S^{N-1}}d\sigma(\nu)=1\\
    W_{M,p}(x)&=\left( \frac{p-1}{p} \right)^{p}\int\limits_{S^{N-1}}\rho_{\nu}(x)^{-p}d\sigma(\nu)=\left( \frac{p-1}{p} \right)^{p}\frac{1}{\Xi(N,p)}\frac{1}{d_{\Omega,M,p}(x)^{p}}.
  \end{align*}
  Thus, using this and Lemma \ref{lem:varphi-stuff-computed}, the identity follows.

  Next, to obtain the inequality, we apply Theorem \ref{thm:mean-bessel-pair-hardy-identities} to the $p$-Bessel pair
  \[
    (V,W)=\left( r^{-\la},\bigg\vert\frac{p+\la-1}{p}\bigg\vert^{p}r^{-\la-p} \right)
  \]
  which has as a positive solution the function $\varphi=r^{\frac{p+\la-1}{p}}$ on the infinite interval $(0,\oo)$, i.e., with $R=\oo$.
  In this case, there is no general closed form $\tilde{V}_{M,p}$ (whence the inequality), but we do have
  \begin{align*}
    V_{M,p}(x)&=\int\limits_{S^{N-1}}\rho_{\nu}(x)^{-\la}d\sigma(\nu)=\frac{1}{\Xi(N,\la)}\frac{1}{d_{\Omega,M,\la}(x)^{\la}}\\
    W_{M,p}(x)&=\bigg\vert\frac{p+\la-1}{p}\bigg\vert^{p}\int\limits_{S^{N-1}}\rho_{\nu}(x)^{-\la-p}d\sigma(\nu)=\bigg\vert\frac{p+\la-1}{p}\bigg\vert^{p}\frac{1}{\Xi(N,\la+p)}\frac{1}{d_{\Omega,M,\la+p}(x)^{\la+p}}.
  \end{align*}
  Thus, using this and Lemma \ref{lem:varphi-stuff-computed}, the identity follows.

\end{proof}

Moreover,  we may use Theorem \ref{thm:mean-bessel-pair-hardy-identities} to obtain the following corollary that is in the spirit of Theorems \ref{thm:brezis-marcus} and \ref{thm:davies-tidblom-theorem}.

\begin{corollary}
  For any domain $\Omega\subset\R^{N}$, any $1<p<\oo$, and any $u\in{C_{0}^{\oo}(\Omega)}$, there holds
  \begin{align*}
    \int\limits_{\Omega}|\grad{u}|^{2}dx&=\frac{1}{4}\int\limits_{\Omega}\frac{|u(x)|^{2}}{d_{\Omega,M,2}^{2}(x)}dx+\frac{4N\la_{0}^{2}}{D_{\oo}(\Omega)^{2}}\int\limits_{\Omega}|u(x)|^{2}dx\\
    &+N\int\limits_{\Omega} \int\limits_{S^{N-1}} d_{\Omega}(x)|J_{0}(\tfrac{\la_{0}}{R}d_{\Omega}(x))|^{2}\bigg\vert\p_{\nu}\left( \frac{u(x)}{d_{\Omega}(x)^{\frac{1}{2}}J_{0}(\frac{\la_{0}}{R}d_{\Omega}(x))} \right)\bigg\vert^{2} d\sigma(\nu) dx\\
    &+ \mathcal{S}_{\Omega} \left[ \frac{|u|^{2}}{\rho_{\nu}} \left[ \frac{1}{2}\frac{1}{\rho_{\nu}}+\frac{\la_{0}}{R}\frac{J_{0}'\left( \frac{\la_{0}}{R}\rho_{\nu} \right)}{J_{0}\left( \frac{\la_{0}}{R}\rho_{\nu} \right)} \right]   \right]
  \end{align*}
  \label{cor:brezis-marcus-type-inequality-for-mean-distance}
\end{corollary}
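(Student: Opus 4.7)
The plan is to derive Corollary \ref{cor:brezis-marcus-type-inequality-for-mean-distance} as a direct specialization of Theorem \ref{thm:mean-bessel-pair-hardy-identities} with $p=2$, applied to the $2$-Bessel pair from Example \ref{B2} with $\la=0$ and $\Lambda=\la_{0}$. Specifically, I would set $R=\tfrac{1}{2}D_{\oo}(\Omega)$ (with the obvious modification $R=\oo$ when $D_{\oo}(\Omega)=\oo$) and take
\[
(V,W)=\left(1,\ \tfrac{1}{4}r^{-2}+\tfrac{\la_{0}^{2}}{R^{2}}\right),\qquad \varphi(r)=r^{1/2}J_{0}\!\left(\tfrac{\la_{0}r}{R}\right).
\]
Since Lamb's constant $\la_{0}$ is strictly smaller than the first zero $z_{0}$ of $J_{0}$, this pair is actually a $2$-Bessel pair on the larger interval $(0,\tfrac{z_{0}}{\la_{0}}R)$, which contains $(0,\tfrac{1}{2}D_{\oo}(\Omega))$ strictly, thus satisfying the interval hypothesis of Theorem \ref{thm:mean-bessel-pair-hardy-identities}.

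The next step is to evaluate the three spherical means associated with $(V,W)$. Using the symmetry identity $\int_{S^{N-1}}|\cos(\nu,\vec e)|^{2}\,d\sigma(\nu)=1/N$, one obtains $\tilde V_{M,2}(x)=1/N$. Combining this with the elementary reduction $\Xi(N,2)=2\Gamma(N/2+1)/\Gamma(N/2)=N$ yields
\[
W_{M,2}(x)=\frac{1}{4N\,d_{\Omega,M,2}(x)^{2}}+\frac{\la_{0}^{2}}{R^{2}}.
\]
The logarithmic derivative that drives the skeletal mean term is computed directly:
\[
\frac{\varphi'(r)}{\varphi(r)}=\frac{1}{2r}+\frac{\la_{0}}{R}\frac{J_{0}'(\la_{0}r/R)}{J_{0}(\la_{0}r/R)}.
\]

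Substituting these three computations into Theorem \ref{thm:mean-bessel-pair-hardy-identities} and multiplying both sides by $N$ (to clear the $1/N$ coefficient on $|\grad u|^{2}$), the first two terms of the corollary emerge immediately, using $N\la_{0}^{2}/R^{2}=4N\la_{0}^{2}/D_{\oo}(\Omega)^{2}$. The $C_{2}$-integrand collapses (since $C_{2}(a,b)=|b|^{2}$) to $\varphi(\rho_{\nu})^{2}|\p_{\nu}(u/\varphi(\rho_{\nu}))|^{2}=\rho_{\nu}|J_{0}(\la_{0}\rho_{\nu}/R)|^{2}\bigl|\p_{\nu}\bigl(u/(\rho_{\nu}^{1/2}J_{0})\bigr)\bigr|^{2}$, producing the third line after multiplying by $N$, and the skeletal mean term is then read off from the formula for $\varphi'/\varphi$ above.

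This is a computational corollary rather than a genuinely new result, so there is no conceptual obstacle. The two places that require mild attention are (i) using $R=\tfrac{1}{2}D_{\oo}(\Omega)$ while still respecting the strict inequality $R_{\text{Bessel}}>\tfrac{1}{2}D_{\oo}(\Omega)$ demanded by Theorem \ref{thm:mean-bessel-pair-hardy-identities}, which is resolved precisely because $\la_{0}<z_{0}$ extends the Bessel pair slightly past $R$, and (ii) the calculation $\Xi(N,2)=N$, which is what converts the sphere-averaged Hardy constant $1/4$ from Tidblom-type form into the factors $\tfrac{1}{4}$ and $\tfrac{4N\la_{0}^{2}}{D_{\oo}(\Omega)^{2}}$ in the stated identity. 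The degenerate case $D_{\oo}(\Omega)=\oo$ is handled analogously with $R=\oo$, whereupon the second and fourth terms on the right-hand side vanish naturally.
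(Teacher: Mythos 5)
Your proof is correct and follows essentially the same route as the paper: the paper likewise applies Theorem \ref{thm:mean-bessel-pair-hardy-identities} with $p=2$ to the Bessel pair $\left(1,\tfrac14 r^{-2}+\tfrac{4\la_0^2}{D_{\oo}(\Omega)^2}\right)$ (i.e., Example \ref{B2} with $\la=0$, $\Lambda=\la_0$, $R=D_{\oo}(\Omega)/2$) with solution $\varphi=r^{1/2}J_0(\la_0 r/R)$, and carries out the same computations $\tilde V_{M,2}=1/N$, $W_{M,2}=\tfrac{1}{4N}d_{\Omega,M,2}^{-2}+\la_0^2/R^2$ before clearing the factor $1/N$. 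Your additional observation that the pair is in fact a Bessel pair on $(0,\tfrac{z_0}{\la_0}R)$, so the strict hypothesis $R_{\mathrm{Bessel}}>\tfrac12 D_{\oo}(\Omega)$ of Theorem \ref{thm:mean-bessel-pair-hardy-identities} is met, is a small point the paper leaves implicit but is the same argument.
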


\begin{proof}

  This identity follows from applying Theorem \ref{thm:mean-bessel-pair-hardy-identities} with $\a=1$ to the $p$-Bessel pair
  \[
    (V,W)=\left(1,\frac{1}{4}r^{-2}+\frac{4\la_{0}^{2}}{D_{\oo}(\Omega)^{2}}\right),
  \]
  which has as a positive solution the function $\varphi=r^{\frac{1}{2}}J_{0}\left( \frac{\la_{0}}{R}r \right)$ on the interval $(0,D_{\oo}(\Omega)/2)$ and is such that $\varphi'$ is also positive on this interval.
  Indeed, we may compute
  \begin{align*}
    \tilde{V}_{M,2}(x)&=\frac{1}{N}\\
    V_{M,2}(x)&=1\\
    W_{M,2}(x)&=\frac{1}{4}\int\limits_{S^{N-1}}\rho_{\nu}(x)^{-2}d\sigma(\nu)+\frac{4 \la_{0}^{2}}{D_{\oo}(\Omega)^{2}}\int\limits_{S^{N-1}}d\sigma(\nu)\\
    &=\frac{1}{4}\frac{1}{N}\frac{1}{d_{\Omega,M,2}(x)^{2}}+\frac{4 \la_{0}^{2}}{D_{\oo}(\Omega)^{2}},
  \end{align*}
  and the identity follows.
\end{proof}

Lastly, recall that, in \cite{D}, Davies used \eqref{eq:hardy-inequality-general-domain} with $p=2$ and $d=d_{\Omega,M,2}$ to establish the spectral lower bound
\[
  H_{0}\geq\frac{N}{4\mu^{2}},
\]
where $H_{0}$ is minus the Dirichlet Laplacian on $\Omega$ and
\[
  \mu = \sqrt{N} \sup\left\{ d_{\Omega,M,2}(x): x \in \Omega \right\}
\]
is the quasi-inradius of $\Omega$.
Such a result of course also holds for the $p$-Laplacian.
In a similar spirit, we may use Corollary \ref{cor:brezis-marcus-type-inequality-for-mean-distance} to obtain an improved spectral lower bound, and this is the content of the following result.

\begin{theorem}
  Let $H_{0}$ denote minus the Dirichlet Laplacian on $\Omega$, and let $\mu$ denote the quasi-inradius of $\Omega$.
  Then there holds
  \[
    H_{0}\geq\frac{N}{4\mu^{2}}+\frac{4N\la_{0}^{2}}{D_{\oo}(\Omega)^{2}}
  \]
  in the sense of bilinear forms.
  \label{cor:spectral-lower-bound-brezis-marcus-type}
\end{theorem}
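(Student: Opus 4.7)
The plan is to obtain the spectral bound as an immediate consequence of Corollary \ref{cor:brezis-marcus-type-inequality-for-mean-distance} by bounding the mean-distance integral from below via the quasi-inradius and discarding the nonnegative remainder terms. Since $H_0$ is the Dirichlet Laplacian and its associated quadratic form on $C_0^{\infty}(\Omega)$ is $u \mapsto \int_{\Omega} |\grad u|^{2} dx$, it suffices, by the standard density of $C_0^{\infty}(\Omega)$ in the form domain of $H_0$, to verify the claimed pointwise inequality of forms on test functions $u \in C_0^{\infty}(\Omega)$.

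First I would recall the identity from Corollary \ref{cor:brezis-marcus-type-inequality-for-mean-distance} with $R = D_{\oo}(\Omega)/2$, namely
\begin{align*}
\int\limits_{\Omega}|\grad{u}|^{2}dx &= \frac{1}{4}\int\limits_{\Omega}\frac{|u(x)|^{2}}{d_{\Omega,M,2}^{2}(x)}dx+\frac{4N\la_{0}^{2}}{D_{\oo}(\Omega)^{2}}\int\limits_{\Omega}|u(x)|^{2}dx \\
&\quad + N\int\limits_{\Omega}\int\limits_{S^{N-1}} d_{\Omega}(x)|J_{0}(\tfrac{\la_{0}}{R}d_{\Omega}(x))|^{2}\bigg\vert\p_{\nu}\!\left( \tfrac{u(x)}{d_{\Omega}(x)^{1/2}J_{0}(\la_{0}d_{\Omega}(x)/R)} \right)\bigg\vert^{2} d\sigma(\nu)\, dx \\
&\quad + \mathcal{S}_{\Omega}\!\left[ \frac{|u|^{2}}{\rho_{\nu}}\left( \frac{1}{2\rho_{\nu}}+\frac{\la_{0}}{R}\frac{J_{0}'(\la_{0}\rho_{\nu}/R)}{J_{0}(\la_{0}\rho_{\nu}/R)} \right) \right].
\end{align*}
The $\p_{\nu}$-integral term is manifestly nonnegative. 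For the spherical skeletal mean term, observe that $\rho_{\nu}(x) \leq R = D_{\oo}(\Omega)/2$ for almost every $(x',\nu)$ at every midpoint $m_{\ell}$, so the argument $\la_{0}\rho_{\nu}/R$ lies in $(0, \la_{0})$, where the bracketed quantity equals $\varphi'(\rho_{\nu})/\varphi(\rho_{\nu})$ for $\varphi(r) = r^{1/2} J_{0}(\la_{0} r / R)$. By the defining property of Lamb's constant recalled in Example \ref{B2}, $\varphi$ is increasing on $(0, R)$, so $\varphi'/\varphi \geq 0$ on this interval. Hence the skeletal-mean term is nonnegative.

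Next, using the definition $\mu = \sqrt{N}\sup\{d_{\Omega,M,2}(x) : x\in\Omega\}$, I have $d_{\Omega,M,2}(x) \leq \mu/\sqrt{N}$ for every $x\in\Omega$, so
\[
\frac{1}{d_{\Omega,M,2}(x)^{2}} \geq \frac{N}{\mu^{2}}, \quad x \in \Omega.
\]
Substituting this bound into the first term on the right-hand side of the identity, and discarding the two nonnegative remainder terms, yields
\[
\int\limits_{\Omega}|\grad{u}|^{2}dx \geq \frac{N}{4\mu^{2}}\int\limits_{\Omega}|u(x)|^{2}dx + \frac{4N\la_{0}^{2}}{D_{\oo}(\Omega)^{2}}\int\limits_{\Omega}|u(x)|^{2}dx,
\]
which is the claimed bilinear form inequality on $C_0^{\infty}(\Omega)$; density then transfers the bound to the full form domain of $H_0$.

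I do not expect any serious obstacle: all the genuine analytic work is packaged into Corollary \ref{cor:brezis-marcus-type-inequality-for-mean-distance} and the construction of the spherical skeletal mean functional. The only point requiring some care is the sign analysis of the skeletal-mean integrand, where one must confirm that the arguments of $J_0$ and $J_0'$ arising in $\mathcal{S}_{\Omega}[\cdot]$ indeed stay in the interval $(0, \la_{0})$; this is exactly guaranteed by choosing $R = D_{\oo}(\Omega)/2$ so that all line-segment midpoints satisfy $\rho_{\nu}(m_\ell) \leq R$, which is precisely how the essential diameter was defined in \eqref{eq:essential-diameter-definition}.
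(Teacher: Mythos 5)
Your proposal is correct and follows essentially the same route as the paper: apply Corollary \ref{cor:brezis-marcus-type-inequality-for-mean-distance}, use $d_{\Omega,M,2}\leq \mu/\sqrt{N}$ to get the $\frac{N}{4\mu^{2}}$ term, and discard the remainder after checking that $\frac{1}{2r}+\frac{\la_{0}}{R}\frac{J_{0}'(\la_{0}r/R)}{J_{0}(\la_{0}r/R)}\geq 0$ on $(0,R)$. Your treatment is in fact slightly more careful than the paper's on the $\sqrt{N}$ normalization of the quasi-inradius and on passing from test functions to the full form domain.
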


\begin{proof}
  This follows directly from Corollary \ref{cor:brezis-marcus-type-inequality-for-mean-distance} since $d_{\Omega,M,2}\leq\mu$, and since the remainder term satisfies
  \begin{align*}
    &N\int\limits_{\Omega} \int\limits_{S^{N-1}} d_{\Omega}(x)|J_{0}(\tfrac{\la_{0}}{R}d_{\Omega}(x))|^{2}\bigg\vert\p_{\nu}\left( \frac{u(x)}{d_{\Omega}(x)^{\frac{1}{2}}J_{0}(\frac{\la_{0}}{R}d_{\Omega}(x))} \right)\bigg\vert^{2} d\sigma(\nu) dx\\
    &+ \mathcal{S}_{\Omega} \left[ \frac{|u|^{2}}{\rho_{\nu}} \left[ \frac{1}{2}\frac{1}{\rho_{\nu}}+\frac{\la_{0}}{R}\frac{J_{0}'\left( \frac{\la_{0}}{R}\rho_{\nu} \right)}{J_{0}\left( \frac{\la_{0}}{R}\rho_{\nu} \right)} \right]   \right]\geq0
  \end{align*}
  by account of
  \[
    \frac{1}{2}\frac{1}{r}+\frac{\la_{0}}{R}\frac{J_{0}'\left( \frac{\la_{0}}{R}r \right)}{J_{0}\left( \frac{\la_{0}}{R}r \right)} \geq0
  \]
  for $0\leq r \leq R$.
\end{proof}

\subsection{Sobolev Inequalities on Non-compact Non-complete Manifolds}\label{section:sobolev-inequality}

In this last section, we use the $L^{2}$-Hardy identities in conjunction with the $L^{2}$-Hardy-Sobolev-Maz'ya inequality \eqref{eq:hsm} to establish Sobolev inequalities on certain manifolds with certain rough metrics.

To begin, let $\varphi = d_{\Omega}^{\frac{1}{2}}$ and note that $\varphi$ is positive in $\Omega$.
Next, let $g_{eu}$ denote the Euclidean metric on $\Omega$, and letting $A = \varphi^{\frac{2}{N-2}}$, define on $\Omega$ the metric $g = A^{2}g_{eu}$.
If $U$ is further assumed to be convex, then $g$ is a smooth metric by Theorem \ref{thm:motzkins-theorem}.
Since the Riemannian gradient $\grad_{g}$ and volume form $dV_{g}$ do not require smoothness of $g$, these are well-defined even if $g$ is not smooth.
Next, let $|\cdot|_{g}$ denote the norm with respect to $g$, and let $2^{*}=\frac{2N}{N-2}$ denote the Sobolev exponent.

\begin{theorem}
  \label{thm:sobolev-inequality-theorem}
  Suppose $\Omega$ supports the Hardy-Sobolev-Maz'ya inequality \eqref{eq:hsm} and $\Delta d_{\Omega} \geq 0$ in the sense of distributions.
  If $\Omega$ is endowed with the (generally rough) metric $g = d_{\Omega}^{\frac{2}{N-2}}|dx|^{2}$, then the following Sobolev inequality holds on $(\Omega,g)$
  \begin{equation}
    \int\limits_{\Omega}|\grad_{g} v |_{g}^{2} dV_{g} \geq C \left( \int\limits_{\Omega} |v|^{2^{*}} dV_{g} \right)^{\frac{2}{2^{*}}}
    \label{eq:sobolev-inequality}
  \end{equation}
  for all functions $v \in C_{0}^{\oo}(\Omega)$ and where $C$ is a positive constant depending independent of $v$.
\end{theorem}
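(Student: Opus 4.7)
\bigskip

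\textbf{Proof proposal.} The plan is to transfer the Euclidean Hardy-Sobolev-Maz'ya inequality \eqref{eq:hsm} to $(\Omega,g)$ by means of the conformal change $g=A^{2}g_{eu}$ with $A=\varphi^{\frac{2}{N-2}}=d_{\Omega}^{\frac{1}{N-2}}$, using the Hardy identity of Corollary \ref{cor:weighted-hardy-identity-for-domains} (with $p=2$, $\lambda=0$) to handle the passage from $|\nabla u|^{2}-\tfrac{1}{4}u^{2}/d_{\Omega}^{2}$ to the weighted Dirichlet integral $d_{\Omega}|\nabla v|^{2}$.

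First I would compute the conformal dictionary explicitly. Since $dV_{g}=A^{N}dx=d_{\Omega}^{\frac{N}{N-2}}dx$ and $|\nabla_{g}v|_{g}^{2}=A^{-2}|\nabla v|^{2}=d_{\Omega}^{-\frac{2}{N-2}}|\nabla v|^{2}$, one obtains
\[
  \int_{\Omega}|\nabla_{g}v|_{g}^{2}\,dV_{g}=\int_{\Omega}d_{\Omega}(x)|\nabla v(x)|^{2}dx,
\]
while, with the substitution $u=\varphi v=d_{\Omega}^{1/2}v$,
\[
  \int_{\Omega}|v|^{2^{*}}dV_{g}=\int_{\Omega}|v|^{2^{*}}d_{\Omega}^{\frac{N}{N-2}}dx=\int_{\Omega}\bigl(d_{\Omega}^{1/2}|v|\bigr)^{2^{*}}dx=\int_{\Omega}|u|^{2^{*}}dx.
\]
So proving \eqref{eq:sobolev-inequality} is equivalent to proving
\[
  \int_{\Omega}d_{\Omega}|\nabla v|^{2}dx\;\geq\;C\Bigl(\int_{\Omega}|u|^{2^{*}}dx\Bigr)^{2/2^{*}}
\]
for all $v\in C_{0}^{\infty}(\Omega)$, where $u=d_{\Omega}^{1/2}v$.

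Next I would apply Corollary \ref{cor:weighted-hardy-identity-for-domains} with $p=2$ and $\lambda=0$ to the function $u$. Since $C_{2}(x,y)=|y|^{2}$, the identity reads
\[
  \int_{\Omega}|\nabla u|^{2}dx-\tfrac{1}{4}\int_{\Omega}\frac{u^{2}}{d_{\Omega}^{2}}dx=\int_{\Omega}d_{\Omega}|\nabla v|^{2}dx-\tfrac{1}{2}\bigl\langle\Delta d_{\Omega},\,u^{2}/d_{\Omega}\bigr\rangle.
\]
Using the hypothesis $\Delta d_{\Omega}\geq 0$ (in the distributional sense) together with the nonnegativity of $u^{2}/d_{\Omega}$, the distributional pairing is nonnegative; rearranging yields
\[
  \int_{\Omega}d_{\Omega}|\nabla v|^{2}dx\;\geq\;\int_{\Omega}|\nabla u|^{2}dx-\tfrac{1}{4}\int_{\Omega}\frac{u^{2}}{d_{\Omega}^{2}}dx.
\]
The Hardy-Sobolev-Maz'ya hypothesis \eqref{eq:hsm} then bounds the right-hand side below by $C_{HSM}\bigl(\int_{\Omega}|u|^{2^{*}}dx\bigr)^{2/2^{*}}$, and chaining the two inequalities produces \eqref{eq:sobolev-inequality} with $C=C_{HSM}$.

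The one technical point is that $u=d_{\Omega}^{1/2}v$ is only Lipschitz (not smooth) near the set where $d_{\Omega}$ is singular, so neither the Hardy identity of Corollary \ref{cor:weighted-hardy-identity-for-domains} nor \eqref{eq:hsm} applies to $u$ verbatim. This is the main (mild) obstacle. To deal with it I would argue by density: since $v\in C_{0}^{\infty}(\Omega)$, on $\operatorname{supp}v$ the function $d_{\Omega}$ is bounded below by a positive constant, so $u\in H_{0}^{1}(\Omega)$ with $u/d_{\Omega}\in L^{2}(\Omega)$ and $u\in L^{2^{*}}(\Omega)$. A standard approximation of $u$ by $C_{0}^{\infty}(\Omega)$ functions in this joint norm, combined with the fact that the pairing $\langle\Delta d_{\Omega},u^{2}/d_{\Omega}\rangle$ depends continuously on $u$ in the same norm (because $u^{2}/d_{\Omega}$ is a Lipschitz compactly supported test function after mollification), lets one pass both \eqref{eq:hsm} and the Hardy identity to $u$. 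With this approximation in hand, all three inequalities line up to give \eqref{eq:sobolev-inequality}.
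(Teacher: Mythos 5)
Your proposal is correct and follows essentially the same route as the paper's proof: the conformal dictionary for $g=d_{\Omega}^{\frac{2}{N-2}}|dx|^{2}$, the substitution $u=d_{\Omega}^{1/2}v$, the $L^{2}$ Hardy identity for the pair $(1,\tfrac14 r^{-2})$ (your Corollary \ref{cor:weighted-hardy-identity-for-domains} with $p=2$, $\la=0$ is exactly the specialization of Theorem \ref{thm:bessel-pair-hardy-identity-general-domain} used in the paper), dropping the nonpositive distributional term via $\Delta d_{\Omega}\geq0$, and chaining with \eqref{eq:hsm}. Your closing remark on approximating the merely Lipschitz function $u=d_{\Omega}^{1/2}v$ (with $d_{\Omega}$ bounded below on $\operatorname{supp}v$) is a point the paper passes over silently, and it is handled adequately by your density argument.
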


\begin{proof}[Proof of Theorem \ref{thm:sobolev-inequality-theorem} (and Theorem \ref{thm:main-theorem3})]
  Let $u = d_{\Omega}^{\frac{1}{2}}v$.
  Under the conformal change $g_{eu} \to g = A^{2} g_{eu}$, we have the following identities
  \begin{align*}
    \grad_{g} &= A^{-2} \grad\\
    dV_{g} & =A^{N}dx\\
    |\grad_{g} u|_{g}^{2} &= A^{-2} |\grad u|^{2},
  \end{align*}
  where the Riemannian data $\grad$, $dx$, and $|\cdot|$ are all with respect to $g_{eu}$.
  It is then easy to see that
  \[
    \int\limits_{\Omega} |u|^{2*} dx = \int\limits_{\Omega}\left| \frac{u(x)}{d_{\Omega}(x)^{\frac{1}{2}}} \right|^{2^{*}} dV_{g}.
  \]
  Next, since it is assumed that \eqref{eq:hsm} is supported, there holds
  \[
    \int\limits_{\Omega}|\grad{u}|^{2}dx-\frac{1}{2}\int\limits_{\Omega}\frac{|u|^{2}}{d_{\Omega}(x)^{2}}dx \geq C \left( \int\limits_{\Omega}|u|^{2^{*}} dx \right)^{\frac{2}{2^{*}}} = \left( \int\limits_{\Omega}\left| \frac{u(x)}{d_{\Omega}(x)^{\frac{1}{2}}} \right|^{2^{*}} dV_{g}\right)^{\frac{2}{2^{*}}}.
  \]

  On the other hand, Theorem \ref{thm:bessel-pair-hardy-identity-general-domain} yields
  \begin{align*}
    \int\limits_{\Omega}  |\grad u(x)|^{2} dx -\int\limits_{\Omega} \frac{|u(x)|^{2}}{d_{\Omega}(x)^{2}} dx &= \int\limits_{\Omega} d_{\Omega}(x) \left| \grad \left( \frac{u(x)}{d_{\Omega}(x)^{\frac{1}{2}}} \right) \right|^{2} dx- \gen{\Delta d_{\Omega},V\left|\frac{\varphi'}{\varphi}\right|^{p-2}\frac{\varphi'}{\varphi} |u|^{p}}\\
    &\leq  \int\limits_{\Omega} d_{\Omega}(x) \left| \grad \left( \frac{u(x)}{d_{\Omega}(x)^{\frac{1}{2}}} \right) \right|^{2} dx,
  \end{align*}
  where $\Delta d_{\Omega} \geq 0$ was used to conclude
  \[
    -\gen{\Delta d_{\Omega},V\left|\frac{\varphi'}{\varphi}\right|^{p-2}\frac{\varphi'}{\varphi} |u|^{p}} \leq 0.
  \]
  Putting this together, we get
  \begin{align*}
    \int\limits_{\Omega} |\grad_{g} v|^{2} dV_{g} = \int\limits_{\Omega} d_{\Omega}(x) \left| \grad v \right|^{2} dx \geq C \left( \int\limits_{\Omega}\left| v \right|^{2^{*}} dV_{g}\right)^{\frac{2}{2^{*}}},
  \end{align*}
  which establishes \eqref{eq:sobolev-inequality}.
\end{proof}

Using Theorem \ref{thm:gkikas-hsm-inequality} for an exterior domains exterior to a weakly mean convex domain, one immediately obtains the following corollary.

\begin{corollary}
  Suppose $\Omega$ is an exterior domain satisfying \eqref{eq:super-harm-for-unbounded-domains} and which is the exterior of a weakly mean convex domain.
  If $\Omega$ is endowed with the metric $g = d_{\Omega}^{\frac{2}{N-2}}|dx|^{2}$, then the following Sobolev inequality holds on $(\Omega,g)$
  \begin{equation*}
    \int\limits_{\Omega}|\grad_{g} v |_{g}^{2} dV_{g} \geq C \left( \int\limits_{\Omega} |v|^{2^{*}} dV_{g} \right)^{\frac{2}{2^{*}}}.
  \end{equation*}
\end{corollary}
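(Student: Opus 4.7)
The plan is to deduce this corollary directly from Theorem \ref{thm:sobolev-inequality-theorem} by verifying its two hypotheses on the exterior domain $\Omega$ in question: (a) that $\Omega$ supports the Hardy-Sobolev-Maz'ya inequality \eqref{eq:hsm}, and (b) that $\Delta d_{\Omega} \geq 0$ holds in the sense of distributions on $\Omega$. Once both are in hand, Theorem \ref{thm:sobolev-inequality-theorem} yields the asserted Sobolev inequality on $(\Omega, g)$ with constant equal to the resulting Hardy-Sobolev-Maz'ya constant, and nothing further is needed.

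For (a) I would invoke Gkikas' theorem (Theorem \ref{thm:gkikas-hsm-inequality}) directly: the corollary assumes that $\Omega$ is an exterior domain satisfying \eqref{eq:super-harm-for-unbounded-domains}, which is exactly the hypothesis of that theorem, and its conclusion \eqref{eq:gkikas-hsm-inequality} is precisely \eqref{eq:hsm} with a constant depending only on $\Omega$ and $N$.

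For (b) I would combine the decomposition \eqref{eq:distributional-laplacian-decomposition} of $\Delta d_{\Omega}$ with the pointwise identity of Lemma \ref{lem:laplace-curvature-identity}. Writing $\Omega = \R^{N} \setminus \overline{U}$ with $U$ weakly mean convex, the outward normal of $\p\Omega$ viewed from $\Omega$ is the inward normal to $U$, so under the paper's sign convention (``convex $\iff k_{j} \leq 0$'') the principal curvatures $k_{j}^{\Omega}$ of $\p\Omega$ seen from $\Omega$ flip sign relative to those of $\p U$ seen from $U$; in particular, weak mean convexity of $U$ corresponds to a nonnegative mean curvature for $\p\Omega$ as seen from $\Omega$. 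On the good set $G(\Omega)$ this forces
\[
\Delta_{G} d_{\Omega}(x) = \sum_{j=1}^{N-1} \frac{k_{j}^{\Omega}(N(x))}{1 + d_{\Omega}(x) k_{j}^{\Omega}(N(x))} \geq 0
\]
pointwise, while the singular contribution $\Delta_{\Sigma} d_{\Omega}$ from \eqref{eq:distributional-laplacian-cut-locus} is a nonpositive distribution that enters the Hardy identity of Theorem \ref{thm:bessel-pair-hardy-identity-general-domain} underlying the proof of Theorem \ref{thm:sobolev-inequality-theorem} with a minus sign, yielding a term of the correct (nonnegative) sign in the pairing against $|u|^{2}/(2 d_{\Omega})$.

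The main obstacle lies in step (b): weak mean convexity of $U$ only controls the sum of principal curvatures of $\p U$, so the conversion to pointwise nonnegativity for $\Delta_{G} d_{\Omega}$ on the exterior is delicate, and the cut-locus term must be handled with care. In the canonical examples flagged after Theorem \ref{thm:main-theorem4} (the exterior of a ball and the exterior of a solid cylinder) the inner domain is in fact convex, so every $k_{j}^{\Omega}$ is nonnegative and one verifies $\Delta d_{\Omega} = (N-1)/r \geq 0$ by direct calculation; for the genuinely weakly-mean-convex case one combines this curvature analysis with the auxiliary superharmonicity-type condition \eqref{eq:super-harm-for-unbounded-domains} to absorb any mixed-sign contributions. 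Once (a) and (b) are secured, a single application of Theorem \ref{thm:sobolev-inequality-theorem} closes the proof.
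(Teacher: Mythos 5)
Your overall plan coincides with the paper's (unwritten, two-line) proof: cite Theorem \ref{thm:gkikas-hsm-inequality} to get the Hardy--Sobolev--Maz'ya inequality \eqref{eq:hsm} for an exterior domain satisfying \eqref{eq:super-harm-for-unbounded-domains}, then feed this into Theorem \ref{thm:sobolev-inequality-theorem}. Your step (a) is exactly what the paper does. The problem is step (b), where your argument for $\Delta d_{\Omega}\geq 0$ does not close, and two of its specific claims are wrong in sign. First, the cut-locus contribution: by \eqref{eq:distributional-laplacian-cut-locus} the distribution $\Delta_{\Sigma}d_{\Omega}$ is \emph{nonpositive}, and what the proof of Theorem \ref{thm:sobolev-inequality-theorem} needs is $\gen{\Delta d_{\Omega},\,u^{2}/(2d_{\Omega})}\geq 0$ so that the remainder $-\gen{\Delta d_{\Omega},\cdot}$ can be dropped; a nonpositive singular part therefore works \emph{against} the required inequality, not for it, contrary to your assertion that it ``yields a term of the correct (nonnegative) sign.'' Second, condition \eqref{eq:super-harm-for-unbounded-domains} reads $-\Delta d_{\Omega}+(N-1)\grad d_{\Omega}\cdot x/|x|^{2}\geq 0$, i.e.\ it bounds $\Delta d_{\Omega}$ from \emph{above}; it cannot be used to ``absorb mixed-sign contributions'' and produce the lower bound $\Delta d_{\Omega}\geq 0$.

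Third, the pointwise claim on the good set is itself unjustified: weak mean convexity of $U$ only gives $\sum_{j}k_{j}^{\Omega}\geq 0$ \emph{on} $\p\Omega$, and since $t\mapsto t/(1+d\,t)$ shrinks the positive terms (each is at most $1/d$) while a negative $k_{j}^{\Omega}$ makes its term blow down as $1+d\,k_{j}^{\Omega}\to 0^{+}$, the quantity $\sum_{j}k_{j}^{\Omega}/(1+d_{\Omega}k_{j}^{\Omega})$ from Lemma \ref{lem:laplace-curvature-identity} can become negative at positive distance; note Lemma \ref{lem:laplace-curvature-inequality} only gives an upper bound of $\Delta d_{\Omega}$ by the mean curvature, never a lower bound. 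The case your argument genuinely covers is the one where the complement is \emph{convex}: then every $k_{j}^{\Omega}\geq 0$, each summand is nonnegative, and the cut locus is empty by Theorem \ref{thm:motzkins-theorem}, so $\Delta d_{\Omega}\geq 0$ holds and the corollary follows as you describe. For a merely weakly mean convex complement the hypothesis $\Delta d_{\Omega}\geq 0$ of Theorem \ref{thm:sobolev-inequality-theorem} must either be verified by a different mechanism or imposed; the paper itself passes over this point in silence, so you were right to flag it as the delicate step, but the fix you propose does not work as stated.
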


\bibliographystyle{abbrv}

\begin{thebibliography}{10}

  \bibitem{AW}
    F.~G. Avkhadiev and K.-J. Wirths.
    \newblock Unified {P}oincar\'{e} and {H}ardy inequalities with sharp constants
    for convex domains.
    \newblock { ZAMM Z. Angew. Math. Mech.}, 87(8-9):632--642, 2007.

  \bibitem{BET}
    A.~A. Balinsky, W.~D. Evans, and R.~T. Lewis.
    \newblock { The analysis and geometry of {H}ardy's inequality}.
    \newblock Universitext. Springer, Cham, 2015.

\bibitem {BFT04} G. Barbatis, S. Filippas and A. Tertikas.
\newblock A unified approach to improved $L^p$ Hardy inequalities with best constants.
\newblock{Trans. Amer. Math. Soc.}, 356(6):2169--2196, 2004.

\bibitem {BGGP20} E. Berchio, D. Ganguly, G. Grillo and Y. Pinchover.
\newblock An optimal improvement for the Hardy inequality on the hyperbolic space and related manifolds.
\newblock{Proc. Roy. Soc. Edinburgh Sect. A}, 150 (2020), no. 4, 1699--1736.


\bibitem{BGR-CVPDE} E. Berchio, D. Ganguly and P. Roychowdhury.
\newblock Hardy-Rellich and second order Poincaré identities on the hyperbolic space via Bessel pairs.
\newblock{Calc. Var. Partial Differential Equations}, 61(4):Paper No. 130, 24 pp, 2022.

  \bibitem{BM}
    H.~Brezis and M.~Marcus.
    \newblock Hardy's inequalities revisited.
    \newblock volume~25, pages 217--237 (1998). 1997.
    \newblock Dedicated to Ennio De Giorgi.

  \bibitem{bunt1934bijdrage}
    L.~N.~H. Bunt.
    \newblock { Bijdrage tot de theorie der convexe puntverzamelingen}.
    \newblock Rijksuniversiteit te Groningen, 1934.



  \bibitem{D}
    E.~B. Davies.
    \newblock Some norm bounds and quadratic form inequalities for
    {S}chr\"{o}dinger operators. {II}.
    \newblock { J. Operator Theory}, 12(1):177--196, 1984.

\bibitem {DPP17} B. Devyver, Y. Pinchover and G. Psaradakis.
\newblock Optimal Hardy inequalities in cones.
\newblock{Proc. Roy. Soc. Edinburgh Sect. A}, 147 (2017), no. 1, 89--124.

\bibitem{DLL} N.  Duy, N. Lam and G.  Lu,
\newblock
p-Bessel pairs, Hardy's identities and inequalities and Hardy-Sobolev inequalities with monomial weights.
\newblock
{J. Geom. Anal.} 32 (2022), no. 4, Paper No. 109, 36 pp.

  \bibitem{FMT}
    S.~Filippas, V.~Maz'ya, and A.~Tertikas.
    \newblock Critical {H}ardy-{S}obolev inequalities.
    \newblock { J. Math. Pures Appl. (9)}, 87(1):37--56, 2007.

  \bibitem{FMT2}
    S.~Filippas, V.~G. Maz'ya, and A.~Tertikas.
    \newblock Sharp {H}ardy-{S}obolev inequalities.
    \newblock { C. R. Math. Acad. Sci. Paris}, 339(7):483--486, 2004.

  \bibitem{FLL}
    J.~Flynn, N.~Lam, and G.~Lu.
    \newblock Sharp {H}ardy {I}dentities and {I}nequalities on {C}arnot {G}roups.
    \newblock { Adv. Nonlinear Stud.}, 21(2):281--302, 2021.

\bibitem{FLL-JFA}J. Flynn, N. Lam and  G. Lu,
\newblock Hardy-Poincar\'e-Sobolev type inequalities on hyperbolic spaces and related Riemannian manifolds.
\newblock{J. Funct. Anal.} 283 (2022), no. 12, Paper No. 109714, 37 pp.



\bibitem{FLLM}  J. Flynn, N. Lam, G. Lu and S.  Mazumdar,
\newblock Hardy's identities and inequalities on Cartan-Hadamard manifolds.
\newblock{J. Geom. Anal.} 33 (2023), no. 1, Paper No. 27, 34 pp.

  \bibitem{frank2020consequences}
    R.~L. Frank and S.~Larson.
    \newblock Two consequences of Davies's hardy inequality, 2020.

  \bibitem{FL}
    R.~L. Frank and M.~Loss.
    \newblock Hardy-{S}obolev-{M}az'ya inequalities for arbitrary domains.
    \newblock { J. Math. Pures Appl. (9)}, 97(1):39--54, 2012.

  \bibitem{GM}
    N.~Ghoussoub and A.~Moradifam.
    \newblock Bessel pairs and optimal {H}ardy and {H}ardy-{R}ellich inequalities.
    \newblock { Math. Ann.}, 349(1):1--57, 2011.

  \bibitem{GM2}
    N.~Ghoussoub and A.~Moradifam.
    \newblock { Functional inequalities: new perspectives and new applications},
    volume 187 of { Mathematical Surveys and Monographs}.
    \newblock American Mathematical Society, Providence, RI, 2013.

  \bibitem{G}
    K.~T. Gkikas.
    \newblock Hardy-{S}obolev inequalities in unbounded domains and heat kernel
    estimates.
    \newblock { J. Funct. Anal.}, 264(3):837--893, 2013.

  \bibitem{HOL}
    M.~Hoffmann-Ostenhof, T.~Hoffmann-Ostenhof, and A.~Laptev.
    \newblock A geometrical version of {H}ardy's inequality.
    \newblock { J. Funct. Anal.}, 189(2):539--548, 2002.

  \bibitem{IT}
    J.-i. Itoh and M.~Tanaka.
    \newblock The {L}ipschitz continuity of the distance function to the cut locus.
    \newblock { Trans. Amer. Math. Soc.}, 353(1):21--40, 2001.

  \bibitem{LLZ}
    N.~Lam, G.~Lu, and L.~Zhang.
    \newblock Factorizations and {H}ardy's type identities and inequalities on
    upper half spaces.
    \newblock { Calc. Var. Partial Differential Equations}, 58(6):Paper No. 183,
    31, 2019.

  \bibitem{LLZ20}
    N.~Lam, G.~Lu, and L.~Zhang.
    \newblock Geometric {H}ardy's inequalities with general distance functions.
    \newblock { J. Funct. Anal.}, 279(8):108673, 35, 2020.

  \bibitem{L}
    J.~L. Lewis.
    \newblock Uniformly fat sets.
    \newblock { Trans. Amer. Math. Soc.}, 308(1):177--196, 1988.

  \bibitem{LLL}
    R.~T. Lewis, J.~Li, and Y.~Li.
    \newblock A geometric characterization of a sharp {H}ardy inequality.
    \newblock { J. Funct. Anal.}, 262(7):3159--3185, 2012.

  \bibitem{LN}
    Y.~Li and L.~Nirenberg.
    \newblock The distance function to the boundary, {F}insler geometry, and the
    singular set of viscosity solutions of some {H}amilton-{J}acobi equations.
    \newblock { Comm. Pure Appl. Math.}, 58(1):85--146, 2005.

  \bibitem{Li}
    E.~H. Lieb.
    \newblock On the lowest eigenvalue of the {L}aplacian for the intersection of
    two domains.
    \newblock { Invent. Math.}, 74(3):441--448, 1983.

  \bibitem{MM}
    C.~Mantegazza and A.~C. Mennucci.
    \newblock Hamilton-{J}acobi equations and distance functions on {R}iemannian
    manifolds.
    \newblock { Appl. Math. Optim.}, 47(1):1--25, 2003.

  \bibitem{MMP}
    M.~Marcus, V.~J. Mizel, and Y.~Pinchover.
    \newblock On the best constant for {H}ardy's inequality in {$\mathbb R^n$}.
    \newblock { Trans. Amer. Math. Soc.}, 350(8):3237--3255, 1998.

  \bibitem{MS}
    T.~Matskewich and P.~E. Sobolevskii.
    \newblock The best possible constant in generalized {H}ardy's inequality for
    convex domain in {${\mathbb R}^n$}.
    \newblock { Nonlinear Anal.}, 28(9):1601--1610, 1997.

  \bibitem{M}
    V.~G. Maz'ja.
    \newblock { Sobolev spaces}.
    \newblock Springer Series in Soviet Mathematics. Springer-Verlag, Berlin, 1985.
    \newblock Translated from the Russian by T. O. Shaposhnikova.

  \bibitem{motzkin1935quelques}
    T.~S. Motzkin.
    \newblock { Sur quelques propri{\'e}t{\'e}s caract{\'e}ristiques des
    ensembles born{\'e}s non convexes}.
    \newblock Bardi, 1935.

  \bibitem{Mu}
    B.~Muckenhoupt.
    \newblock Hardy's inequality with weights.
    \newblock { Studia Math.}, 44:31--38, 1972.

  \bibitem{P}
    G.~Psaradakis.
    \newblock {$L^1$} {H}ardy inequalities with weights.
    \newblock { J. Geom. Anal.}, 23(4):1703--1728, 2013.

  \bibitem{R}
    G.~V. Rozenbljum.
    \newblock The eigenvalues of the first boundary value problem in unbounded
    domains.
    \newblock { Mat. Sb. (N.S.)}, 89 (131):234--247, 350, 1972.

  \bibitem{S}
    A.~Savo.
    \newblock Optimal eigenvalue estimates for the {R}obin {L}aplacian on
    {R}iemannian manifolds.
    \newblock { J. Differential Equations}, 268(5):2280--2308, 2020.

  \bibitem{T}
    G.~Talenti.
    \newblock Osservazioni sopra una classe di disuguaglianze.
    \newblock { Rend. Sem. Mat. Fis. Milano}, 39:171--185, 1969.

  \bibitem{Ti}
    J.~Tidblom.
    \newblock A geometrical version of {H}ardy's inequality for
    {${W}^{1,p}(\Omega)$}.
    \newblock { Proc. Amer. Math. Soc.}, 132(8):2265--2271, 2004.

  \bibitem{To}
    G.~Tomaselli.
    \newblock A class of inequalities.
    \newblock { Boll. Un. Mat. Ital. (4)}, 2:622--631, 1969.

\bibitem{Wang} J. Wang,
\newblock $L^p$ Hardy's identities and inequalities for Dunkl operators.
\newblock{Adv. Nonlinear Stud.} 22 (2022), no. 1, 416–435.

\end{thebibliography}

\end{document}